		\newcommand{\NN}{\mathbb{N}}	
		\newcommand{\RR}{\mathbb{R}}
		\newcommand{\ZZ}{\mathbb{Z}}
		\newcommand{\CF}{\mathcal{F}}	
\newcommand{\CG}{\mathcal{G}}
		\newcommand{\CP}{\mathcal{P}}
\newcommand{\ubar}[1]{\underaccent{\bar}{#1}} 		
\newcommand{\bs}{\backslash} 				
\DeclarePairedDelimiter\abs{\lvert}{\rvert}		
\DeclarePairedDelimiter\norm{\lVert}{\rVert}		
\DeclarePairedDelimiter\angles{\langle}{\rangle}	
\DeclarePairedDelimiter\paren{(}{)}			
\DeclarePairedDelimiter\brackets{[}{]}			
\DeclarePairedDelimiter\braces{\{}{\}}			
	\newcommand{\bigparen}[1]{\paren[\big]{#1}}
	\newcommand{\Bigparen}[1]{\paren[\Big]{#1}}
\newcommand{\bigbrack}[1]{\brackets[\big]{#1}}
	\newcommand{\bigbraces}[1]{\braces[\big]{#1}}
	\newcommand{\Bigbraces}[1]{\braces[\Big]{#1}}
\newcommand{\bigmid}{\mathrel{\big|}}			
\newcommand{\Bigmid}{\mathrel{\Big|}}			
\theoremstyle{plain}
\newtheorem{thm}{Theorem}[section]				
\newtheorem{prop}[thm]{Proposition}		
\newtheorem{lem}[thm]{Lemma}						
\newtheorem{cor}[thm]{Corollary}
\newtheorem*{thm*}{Theorem}			\newtheorem*{theorem*}{Theorem}		
\newtheorem*{prop*}{Proposition}		\newtheorem*{proposition*}{Proposition}
\newtheorem*{lem*}{Lemma}			\newtheorem*{lemma*}{Lemma}			
\newtheorem*{cor*}{Corollary}			\newtheorem*{corollary*}{Corollary}
\newtheorem*{qu*}{Question}			\newtheorem*{question*}{Question}
\newtheorem*{conj*}{Conjecture}			\newtheorem*{conjecture*}{Question}
\newtheorem*{fact*}{Fact}
\newtheorem*{claim*}{Claim}
\newtheorem{alphthm}{Theorem}			
\newtheorem{alphcor}[alphthm]{Corollary}
\theoremstyle{definition}
\newtheorem{de}[thm]{Definition}
\newtheorem*{de*}{Definition}			\newtheorem{definition*}{Definition}	
\newtheorem*{notation*}{Notation}	
\newtheorem*{conv*}{Convention}			\newtheorem*{convention*}{Convention}
\theoremstyle{remark}
\newtheorem{rmk}[thm]{Remark}						
\newtheorem{exmp}[thm]{Example}			\newtheorem{example}[thm]{Example}
\numberwithin{equation}{section}
 \theoremstyle{definition}
  \newtheorem{defn}[thm]{Definition}
  \newtheorem*{stdassump}{Standing assumption}
 \theoremstyle{remark}
 \newtheorem{rem}[thm]{Remark}
\def\diam{\mathrm{diam}}
\def\N{\mathbb N}
\newcommand{\acbdry}{\partial^{\Gamma}}
\newcommand{\fol}{F\o{}lner\xspace}
\newcommand{\fnc}{\ubar c}
\newcommand{\fnb}{\ubar b}
\newcommand{\fnk}{\ubar k}
\newcommand{\fnh}{\ubar h}
\newcommand{\mappgraph}[1]{\T^\nu\paren*{#1}}
\newcommand{\appgraph}[1]{\T\paren*{#1}}
\newcommand*{\mabs}[1]{\abs{#1}_\nu}
\newcommand{\symdiff}{{\, \resizebox{0.8 em}{!}{$\triangle$}\, }}
\newcommand\sout[1]{}
\def\P{\mathcal P}
\def\G{\mathcal G}
\def\F{\mathcal F}
\def\T{\mathcal T}
\def\d{\mathrm{d}}
\def\act{\curvearrowright}
\begin{document}

\title{Asymptotic expansion in measure and strong ergodicity}

\author{Kang Li}
\address{Institute of Mathematics of the Polish Academy of Sciences, ul.\ \'Sniadeckich 8, 00-656 Warsaw, Poland}
	\email{kli@impan.pl}

\author{Federico Vigolo}
\address{Faculty of Mathematics and Computer Science, Weizmann Institute, 7610001, Rehovot, Israel}
\email{federico.vigolo@weizmann.ac.il}
\author{Jiawen Zhang}
\address{School of Mathematics, University of Southampton, Highfield, SO17 1BJ, Southampton, United Kingdom}
\email{jiawen.zhang@soton.ac.uk}

\thanks{KL has received funding from the European Research Council (ERC) under the European Union's Horizon 2020 research and innovation programme (grant agreement no. 677120-INDEX)}
\thanks{FV was supported by the ISF Moked 713510 grant number 2919/19.}
\thanks{JZ was supported by the Sino-British Trust Fellowship by Royal Society, International Exchanges 2017 Cost Share (China) grant EC$\backslash$NSFC$\backslash$170341, and NSFC11871342.}

\begin{abstract}
In this paper, we introduce and study a notion of asymptotic expansion in measure for measurable actions. This generalises expansion in measure and provides a new perspective on the classical notion of strong ergodicity. Moreover, we obtain structure theorems for asymptotically expanding actions, showing that they admit exhaustions by domains of expansion. As an application, we recover a recent result of Marrakchi, characterising strong ergodicity in terms of local spectral gaps. We also show that homogeneous strongly ergodic actions are always expanding in measure and establish a connection between asymptotic expansion in measure and asymptotic expanders by means of approximating spaces.
\end{abstract}

\date{\today} 
\maketitle
\noindent\textit{Mathematics Subject Classification (2020): 37A30, 37A15. Secondary: 05C48.}\\
\textit{Keywords: asymptotic expansion in measure; asymptotic expanders; domain of expansion; strong ergodicity; local spectral gap.}\\

\section{Introduction}

This paper is the first half of a longer study on the dynamical analogue of a notion of ``asymptotic expansion'' for graphs. \emph{Asymptotic expanders} are families of finite metric spaces satisfying some weak expansion property. They were introduced in \cite{intro} as part of a study of $C^*$\=/algebras and projections associated with expander graphs.
In a sense that was made precise in \cite{structure}, asymptotic expanders are families of metric spaces that are ``almost everywhere expanders''.

It has long been known there are analogies between expansion for graphs and some dynamical properties of actions of finitely generated groups: this idea lies at the heart of many early constructions of explicit families of \emph{expander graphs} \cite{bourgain_expansion_2013, gabber_explicit_1981, Mar73, shalom_expanding_1997}. 
It was proved in \cite{Vig19} that, under relatively mild hypotheses, families of graphs approximating a measurable action $\rho\colon\Gamma\curvearrowright (X,\nu)$ are expanders if and only if $\rho$ is \emph{expanding in measure}. For measure\=/preserving actions, expansion in measure is also equivalent to a classical notion of \emph{spectral gap} (see \cite{Vig19}). The spectral characterisation of expansion in measure can be used to provide numerous explicit examples of expanding actions (\emph{e.g.}, using \cite{benoist_spectral_2014,bourgain_spectral_2007,conze_ergodicity_2013,GJS99}, \emph{etc.}) and hence to construct expander graphs via the approximation procedure of \cite{Vig19}.

This project began as an attempt to adapt the techniques from \cite{Vig19} to investigate dynamical analogues of asymptotic expansion. This led to the definition of \emph{asymptotic expansion in measure} (see below for a definition). Surprisingly, it turned out that---for measure\=/class\=/preserving actions---asymptotic expansion in measure is equivalent to yet another classical notion in dynamical systems: \emph{strong ergodicity}.
We could then translate the structure theory of asymptotic expanders developed in \cite{structure} to the context of strongly ergodic actions. Interestingly, the dynamical analogue of a key lemma in \cite{structure} turns out to be a version of a maximality trick that is often used in the von Neumann algebraic approach to dynamical systems (Section~\ref{ssec:maximal.folner}). Our approach led us to produce elementary proofs of a few recent results concerning strongly ergodic actions and to the discovery of some completely new phenomena (these new phenomena are mainly developed in \cite{dynamics2}). 

The main technical result of this paper is a structure theorem for actions that are asymptotically expanding in measure (Theorem~\ref{thm:intro:structure}). This theorem directly implies a result of Marrakchi \cite{Mar18}. Moreover, Theorem~\ref{thm:intro:structure} is used heavily in the sequel to this paper, where asymptotic expansion is studied in relation with Markov kernels, operators of finite propagation and Roe algebras of associated warped cones. This study eventually leads to a characterisation of strong ergodicity in terms of rank\=/one projections and to the production of new counterexamples to the coarse Baum--Connes conjecture \cite[Theorems E,F and G]{dynamics2}.

A second contribution of this paper is a satisfactory extension of the approximation procedure of \cite{Vig19} to the study of asymptotic expansion. This technique can be used to produce examples of asymptotic expanders, and it also served as a motivation and source of inspiration for all the results presented in this paper and its sequel \cite{dynamics2}.

We will now give a more detailed overview of this work.

\subsection{The structure theory for asymptotic expansion}

Recall that a measure-class-preserving action on a probability space $\Gamma \act (X,\nu)$ is \emph{strongly ergodic} if any sequence of almost invariant measurable subsets must be asymptotically trivial. That is,
any sequence $\{C_n\}_{n \in \N}$ of measurable subsets of $X$ such that $\lim_{n\to \infty} \nu(C_n \symdiff \gamma C_n)=0$ for every $\gamma \in \Gamma$ must satisfy $\lim_{n\to \infty} \nu(C_n)(1-\nu(C_n))=0$. 
This notion was introduced in \cite{connes_property_1980,schmidt1980asymptotically,Sch81} in relation with Ruziewicz problem, Kazhdan's Property~(T) and amenability. Since then, it became clear that strong ergodicity is also useful in the context of von Neumann algebras and it was recently used to prove rigidity results \cite{ioana2017strong}.
We refer to \cite{houdayer2017strongly,ozawa2016remark,vaes2018bernoulli} for interesting examples of strongly ergodic actions and further motivation.
For measure-preserving actions, spectral gap implies strong ergodicity. It was shown in \cite{Sch81} the converse is not true in general.

As already announced, this paper is concerned with the dynamical counterpart to asymptotic expanders (we refer to Section~\ref{sec:prelims} for the definition of asymptotic expanders). This notion is defined as follows:

\begin{defn}\label{def:intro:asymptotic expansion}
A measurable action $\rho\colon \Gamma\curvearrowright (X,\nu)$ of a countable discrete group on a probability space is called \emph{asymptotically expanding (in measure)} if there exist for every $0<\alpha\leq \frac 12$ a constant $c(\alpha)>0$ and a finite subset $S(\alpha)\subseteq \Gamma$ such that
 \[
  \nu\big(\bigcup_{s\in S(\alpha)} s\cdot A\big)>(1+c(\alpha))\nu(A)
 \]
for every measurable subset $A\subseteq X$ with $\alpha\leq\nu(A)\leq \frac 12$.
\end{defn}

Definition~\ref{def:intro:asymptotic expansion} is clearly a weakening of expansion in measure as defined in \cite{Vig19} (see also \cite{grabowski_measurable_2016}). Furthermore, it is not hard to see that if $\rho$ is measure-class-preserving then it is asymptotically expanding in measure \emph{if and only if} it is strongly ergodic (see Proposition~\ref{prop:strongly ergodic iff asymptotic expanding}).
More precisely, asymptotic expansion in measure can be regarded as a quantitative version of strong ergodicity, as the dependence of $c(\alpha)$ and $S(\alpha)$ on the parameter $\alpha$ in Definition \ref{def:intro:asymptotic expansion} describes quantitatively the triviality of any sequence of almost invariant measurable sets. 
Its quantitative nature will play a crucial role in the sequel to this work \cite{dynamics2}.

The following diagram summarizes the relations between the properties introduced thus far:
\\[-.3cm]
\begin{figure}[h!] 
\centerline{
\xymatrixcolsep{3.5pc}\xymatrixrowsep{2pc}
\xymatrix{
  \rm{Expansion~in~measure} \ar@2{<->}[r]^-{\textstyle \rm{m.p.}} \ar@{=>}[d] & \rm{Spectral~gap} \ar@{=>}[d]^-{\textstyle \rm{m.p.}}  \\
  \rm{Asymptotic~expansion~in~measure} \ar@2{<->}[r]^-{\textstyle \rm{m.c.p.}}
                 & \rm{Strong~ergodicity}.  
}}
\caption*{\footnotesize m.p.: measure-preserving, \quad m.c.p.: measure-class-preserving.}
\caption{Relations of actions on probability spaces}
\end{figure}

In our investigation, an important role will be played by the ``localised'' versions of the above mentioned properties. 
Some of these have already been defined. Namely, it has already been settled what it means that an action $\Gamma\curvearrowright (X,\nu)$ is expanding in measure or has a spectral gap ``restricted'' to some finite measure subset $Y\subseteq X$ (note that the set $Y$ is not required to be $\Gamma$\=/invariant). In the former case $Y$ is said to be a \emph{domain of expansion (in measure)} (\cite{grabowski_measurable_2016}, see also Definition \ref{defn:domain of expansion}), and in the latter the action is said to have a \emph{local spectral gap with respect to $Y$} (\cite{BIG17}, see also Definition \ref{defn:local spectral gap}). 

Although defined independently, these localised versions of expansion in measure and spectral gap remain equivalent in the measure-preserving case. That is, if $\Gamma\curvearrowright(X,\nu)$ is a measure-preserving action and $Y\subseteq X$ has finite measure, then it is a domain of expansion \emph{if and only if} the action has local spectral gap with respect to $Y$ (\cite[Lemma~5.2]{grabowski_measurable_2016}). This can be seen as an indication of the naturality of this approach. 
Following the same philosophy, we thus introduce the notion of \emph{domain of asymptotic expansion (in measure)} as a localised version of asymptotic expansion in measure:

\begin{defn}\label{def:intro:domain asymptotic expansion}
For a given measurable action of a countable discrete group $\Gamma$ on a measure space $(X,\nu)$, a positive finite measure subset $Y\subseteq X$ is a \emph{domain of asymptotic expansion (in measure)} if there exist for every $0<\alpha\leq \frac 12$ a constant $c(\alpha)>0$ and a finite subset $S(\alpha)\subseteq \Gamma$ such that
 \[
  \nu\big(\bigcup_{s\in S(\alpha)} (s\cdot A)\cap Y\big)>(1+c(\alpha))\nu(A)
 \]
for every measurable subset $A\subseteq Y$ with $\alpha\nu(Y)\leq\nu(A)\leq \frac 12\nu(Y)$.
\end{defn}

Recall that an \emph{exhaustion} of a measure space $X$ is an increasing sequence $X_n\subseteq X_{n+1}$ such that $X=\bigcup_{n\in\NN}X_n$ up to measure zero sets. Our main theorem can be summarised as follows:

\begin{alphthm}\label{thm:intro:structure}
 Let $\rho\colon\Gamma\curvearrowright (X,\nu)$ be a measure\=/class\=/preserving action of a countable discrete group $\Gamma$ on a $\sigma$\=/finite measure space $(X,\nu)$. Then the following are equivalent:
 \begin{enumerate}[(1)]
  \item $\rho$ is strongly ergodic;
  \item $X$ admits an exhaustion by domains of expansion; 
  \item every finite measure subset is a domain of asymptotic expansion;
  \item $\rho$ is ergodic and $X$ admits a domain of (asymptotic) expansion.
 \end{enumerate}
\end{alphthm}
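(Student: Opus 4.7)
The plan is to establish $(1) \Leftrightarrow (3)$ first, then close the cycle $(1) \Rightarrow (2) \Rightarrow (4) \Rightarrow (1)$.

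The equivalence $(1) \Leftrightarrow (3)$ is a localised form of Proposition~\ref{prop:strongly ergodic iff asymptotic expanding}: any almost invariant sequence $\{C_n\}$ witnessing failure of strong ergodicity is essentially supported on some finite measure set $Y$ and can be tested against the asymptotic expansion of $Y$, and conversely any failure of asymptotic expansion on a finite measure $Y$ yields an almost invariant sequence inside $Y$, hence in $X$. The implication $(2) \Rightarrow (4)$ is immediate for the existence of a domain of expansion; ergodicity follows because a non-trivial invariant $C$ would, for some $n$ large enough, give a non-trivial invariant subset of $X_n$ of measure at most $\nu(X_n)/2$ (replacing $C$ by $X\setminus C$ if necessary), contradicting the expansion of $X_n$. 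For $(4) \Rightarrow (1)$, the expansion of the domain $Y_0$ forces any putative almost invariant sequence $\{C_n\}$ to satisfy $\nu(C_n \cap Y_0)\bigl(\nu(Y_0) - \nu(C_n \cap Y_0)\bigr) \to 0$; ergodicity guarantees $\Gamma \cdot Y_0 = X$ up to null sets, and a diagonal argument over finite $F \subseteq \Gamma$ (using almost invariance along each translate $\gamma Y_0$) propagates the triviality to any finite measure reference set.

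The main obstacle is $(1) \Rightarrow (2)$. I would fix any $\sigma$\=/finite exhaustion $\{Y_n\}$ of $X$ by finite measure subsets and build an exhaustion $\{X_n\}$ by domains of expansion inductively, arranging $X_{n-1} \subseteq X_n$ and $\nu(Y_n \setminus X_n) < 2^{-n}$. The single-step extraction is the dynamical analogue of the structural result of \cite{structure}: given a finite measure $Z$ known to be a domain of asymptotic expansion (supplied by $(3)$), produce $Z' \subseteq Z$ of almost full measure that is a domain of genuine expansion with uniform constants. The proof proceeds via the maximality trick advertised in Section~\ref{ssec:maximal.folner}: for each small scale $\alpha$, consider the family of measurable subsets $B \subseteq Z$ on which the expansion inequality fails for the given $\alpha$, take a maximal such $B$, and use the quantitative asymptotic expansion of $Z$ together with maximality to bound $\nu(B)$. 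Coherence of the $\{X_n\}$ across $n$ is secured by running the extraction relative to $X_{n-1}$, which is already a domain of expansion, so the argument only enlarges the previous step by a subset of $Y_n \setminus X_{n-1}$.

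The hardest technical point, I expect, will be arranging that the extracted $Z'$ has uniform expansion constants across all scales $\alpha \in (0, 1/2]$ simultaneously, rather than at one scale at a time. I anticipate handling this by a diagonal argument over a decreasing sequence $\alpha_k \to 0$, running the maximality trick at each scale and excising an at most $2^{-k}\nu(Z)$ fraction each time, so that the total excision is a controllably small fraction of $\nu(Z)$ while the intersection is uniformly expanding in the sense required for a domain of expansion.
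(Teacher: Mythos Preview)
Your overall architecture is reasonable and close in spirit to the paper's, but there is a genuine conceptual gap in your handling of the extraction step in $(1)\Rightarrow(2)$, and a systematic underestimation of the role of the equivalent probability measure throughout.

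\textbf{The maximality trick does not work scale-by-scale.} You write that you will ``run the maximality trick at each scale $\alpha_k$'' and then intersect, controlling the total excision. This is both unnecessary and does not obviously work. The key point (Lemma~\ref{lem:complement of Folner set}) is that if $F$ is a maximal element of the family of \emph{all} $(\epsilon,k)$-\fol sets in $Y$ --- with \emph{no} lower bound on the measure of $F$ --- then \emph{every} subset $A$ of $Y\smallsetminus F$ with $0<\nu(A)\leq\tfrac{1}{2}\nu(Y)-\nu(F)$ already satisfies $\nu(\acbdry_k A\cap (Y\smallsetminus F))>\epsilon\nu(A)$. Thus a \emph{single} maximal \fol set at fixed parameters $(\epsilon,k)$ makes its complement a domain of $(\epsilon,k)$-expansion at \emph{all} scales simultaneously. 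The only role of asymptotic expansion is to guarantee that this maximal $F$ is \emph{small}: if $\nu(F)$ were large, $F$ itself would violate the expansion inequality at its own scale (see Proposition~\ref{prop:exhausting domains by domains}). Your per-scale excisions, by contrast, would each give expansion with constants $(c_k,S_k)$ depending on $k$; intersecting these gives a set whose complement has small measure, but on which you only recover expansion with scale-dependent constants --- i.e.\ asymptotic expansion again, not genuine expansion. Passing from a domain of asymptotic expansion to a subset that is a domain of expansion is precisely the problem you were trying to solve (and subsets of domains of expansion need not be domains of expansion, cf.\ Remark~\ref{robust}).

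\textbf{The passage between $\nu$ and an equivalent probability measure is doing real work.} Your sketches for $(1)\Leftrightarrow(3)$ and $(4)\Rightarrow(1)$ treat almost-invariant sequences as if they were defined for $\nu$ directly, but on an infinite measure space strong ergodicity is defined via an equivalent probability measure $\nu'$ (Definition~\ref{strong ergodic on infinite space}). The translation is not automatic: a domain of expansion for $\nu$ need not be one for $\nu'$. The paper handles this in two ways. For statements about \emph{asymptotic} expansion, Lemma~\ref{lem:asymptotic expansion is invariant under equivalence} shows the notion is measure-class invariant, which drives $(1)\Leftrightarrow(3)$. For $(1)\Rightarrow(2)$, the paper chooses the exhaustion to lie inside the sets $X_n=\{x:\tfrac{1}{n}\leq\tfrac{d\nu}{d\nu'}(x)\leq n\}$, so that the bounded Radon--Nikodym derivative converts expansion for $\nu'$ into expansion for $\nu$ (see the proof of Theorem~\ref{thm:structure theorem general}). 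This is why Proposition~\ref{prop:exhausting domains by domains} is stated \emph{relative to} shrinking sets $Z_n$: one must arrange the extracted domains to avoid the region where $d\nu/d\nu'$ is unbounded. Your inductive scheme ``relative to $X_{n-1}$'' does not address this, and in particular it is not clear how you would ensure that the extracted domains of expansion for $\nu'$ are also domains of expansion for $\nu$.
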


The proof is fairly involved but makes only use of elementary measure theory. 
In the case of measure\=/preserving actions, the equivalence ``$(1)\Leftrightarrow(4)$'' yields an elementary proof of \cite[Theorem A]{Mar18}:

\begin{alphcor}
 A measure\=/preserving ergodic action $\rho\colon\Gamma\curvearrowright (X,\nu)$ of a countable discrete group $\Gamma$ on a $\sigma$\=/finite measure space $(X,\nu)$ is strongly ergodic \emph{if and only if} it has local spectral gap with respect to some $Y\subseteq X$ with $0<\nu(Y)<\infty$.
\end{alphcor}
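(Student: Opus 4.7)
The plan is to combine the equivalence $(1)\Leftrightarrow(4)$ of Theorem~\ref{thm:intro:structure} with the classical fact that, for a measure-preserving action, a finite-measure subset $Y\subseteq X$ is a domain of expansion \emph{if and only if} the action has local spectral gap with respect to $Y$---recorded as \cite[Lemma~5.2]{grabowski_measurable_2016}. This is the only extra input required beyond Theorem~\ref{thm:intro:structure} itself, so the entire proof reduces to a two-step translation.

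For the ``only if'' direction, assume $\rho$ is strongly ergodic. Invoking $(1)\Rightarrow(2)$ of Theorem~\ref{thm:intro:structure} produces an exhaustion of $X$ by domains of expansion, so in particular there exists some $Y\subseteq X$ with $0<\nu(Y)<\infty$ that is a genuine domain of expansion. Applying \cite[Lemma~5.2]{grabowski_measurable_2016} then upgrades this to local spectral gap with respect to $Y$. Conversely, if $\rho$ has local spectral gap with respect to some $Y$ of positive finite measure, the same lemma in the other direction shows $Y$ is a domain of expansion, and hence \emph{a fortiori} a domain of asymptotic expansion. Combined with the standing ergodicity hypothesis this verifies clause $(4)$, so the implication $(4)\Rightarrow(1)$ of Theorem~\ref{thm:intro:structure} yields strong ergodicity.

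The main obstacle is not in the corollary itself but in Theorem~\ref{thm:intro:structure}, into which all the measure-theoretic difficulty has been absorbed; the corollary is essentially a bookkeeping translation through \cite[Lemma~5.2]{grabowski_measurable_2016}. The one point worth flagging is that in clause $(4)$ we are free to interpret ``domain of (asymptotic) expansion'' as a genuine domain of expansion when appealing to Grabowski's equivalence, which is why the reduction goes through cleanly in both directions.
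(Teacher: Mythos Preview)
Your proof is correct and matches the paper's own argument essentially verbatim: the paper likewise deduces the corollary by combining the equivalence $(1)\Leftrightarrow(4)$ (equivalently $(1)\Leftrightarrow(6)$ in the detailed Theorem~\ref{thm:structure theorem general}) with \cite[Lemma~5.2]{grabowski_measurable_2016}. The only cosmetic difference is that for the forward direction you cite $(1)\Rightarrow(2)$ to extract a single domain of expansion, whereas the paper phrases this via $(1)\Rightarrow(4)$ directly; these are of course the same observation.
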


As a matter of fact, our approach to Theorem \ref{thm:intro:structure} allows us to prove slightly more refined results. Firstly, if we assume that the measure $\nu$ is finite we can prove a similar result for more general (\emph{i.e.}, non necessarily measure\=/class preserving) actions. More precisely, for a probability space $(X,\nu)$ conditions $(2)$ and $(3)$ of Theorem~\ref{thm:intro:structure} are equivalent to asymptotic expansion in measure\footnote{%
In other words, we replace strong ergodicity in $(1)$ of Theorem~\ref{thm:intro:structure} with asymptotic expansion in measure. As already discussed, these two notions are equivalent for measure\=/class\=/preserving actions (see Proposition~\ref{prop:strongly ergodic iff asymptotic expanding}).
}
for \emph{any} measurable action $\Gamma\curvearrowright (X,\nu)$. 
We refer to Theorem~\ref{thm:structure theorem for probability} for a precise statement. It is worth pointing out that we first prove this finite\=/measure version of the theorem and then use it as a stepping stone to prove Theorem~\ref{thm:intro:structure} for general $\sigma$\=/finite measure spaces (Theorem~\ref{thm:structure theorem general}).

Secondly, we also obtain a more ``quantitative'' control on the expansion properties of the action. In particular, if the acting group $\Gamma$ is finitely generated and $S$ is a finite symmetric generating set, then Theorem~\ref{thm:intro:structure} remains true if one changes the definition of asymptotic expansion (Definitions~\ref{def:intro:asymptotic expansion} and~\ref{def:intro:domain asymptotic expansion}) by requiring that $S(\alpha)\equiv S$ independently of $\alpha$ (see Proposition \ref{prop:structure thm finitely generated case} and Corollary \ref{cor:quantitative version of structure thm for f.g. groups}). 

\

Theorem~\ref{thm:intro:structure} can be used to shed more light on the relation between strong ergodicity and spectral gap. This is best explained 
by examining Schmidt's example of strongly ergodic probability measure\=/preserving action that fails to have a spectral gap \cite{Sch81}.
In our terminology, his example is constructed as an action on an infinite union $X=\bigsqcup_{k\geq 0} X_k$ such that each finite union $\bigsqcup_{k=0}^n X_k$ is a domain of expansion, but the sequence of infinite tails $\bigsqcup_{k>n}X_k$ is a sequence of almost invariant subsets. It is interesting to note that all the ``non-expansion'' is confined in arbitrarily small tails in $X$. The construction of an exhaustion by domains of expansion in Theorem~\ref{thm:intro:structure} implies that this is indeed the general picture.
That is, if a probability measure\=/preserving action $\rho\colon\Gamma\curvearrowright(X,\nu)$ is strongly ergodic but fails to have a spectral gap, then all the non\=/expansion is concentrated in some (arbitrarily small) subset of $X$.  
It is reasonable to expect that the dynamics within a small set of non\=/expansion is quantitatively different from the rest, or in other words, that the action $\rho$ cannot be ``homogeneous''. In turn, if one knows that $\Gamma\curvearrowright (X,\nu)$ \emph{is} homogeneous and strongly ergodic then the action \emph{must} have a spectral gap. Other authors recently made similar observations (\cite[Theorem~4]{abert2012dynamical} and \cite[Lemma~10]{chifan2010ergodic}). 
The techniques developed in this paper allow us to make the above argument precise and to prove the following quantitative statement:

\begin{alphthm}
\label{thm:intro:homogeneous strong ergodicity}
Let $\rho\colon \Gamma\curvearrowright (X,\nu)$ be a measurable action of a countable discrete group on a probability space and assume that $\rho$ commutes with an ergodic measure\=/preserving action. If $\rho$ is asymptotically expanding ``at the scale of $\alpha_0$'' for some $0<\alpha_0\leq \frac 14$, then $\rho$ is expanding in measure.
\end{alphthm}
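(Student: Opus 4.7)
The plan is to replicate any small set $A$ using the commuting ergodic m.p.\ action $\Lambda\curvearrowright(X,\nu)$, apply the expansion hypothesis at scale $\alpha_0$ to the replicated set, and transfer the resulting gain back to $A$. Fix $c_0:=c(\alpha_0)$ and $S:=S(\alpha_0)$ from the hypothesis, and assume $e\in S$. First observe that $\rho$ is automatically measure-preserving: for each $\gamma\in\Gamma$, the Radon--Nikodym derivative $d(\gamma_*\nu)/d\nu$ is $\Lambda$-invariant (because $\gamma$ commutes with every $\lambda\in\Lambda$ and $\lambda$ preserves $\nu$), hence constant a.e.\ by ergodicity of $\Lambda$, and therefore equal to $1$.

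The technical heart is the following \emph{decorrelation lemma}: for any measurable $A,B\subseteq X$ and any $\varepsilon>0$, there exists $\lambda\in\Lambda$ with $\nu(\lambda A\cap B)\leq\nu(A)\nu(B)+\varepsilon$. This follows from the Alaoglu--Birkhoff mean ergodic theorem applied to the Koopman representation $\pi$ of $\Lambda$ on $L^2(X,\nu)$: by ergodicity the projection of $\infun_B-\nu(B)\in L^2_0$ onto the subspace of $\pi$-invariants vanishes, yet $0$ lies in the weak closure of $\mathrm{conv}\{\pi(\lambda)(\infun_B-\nu(B)):\lambda\in\Lambda\}$; pairing with $\infun_A-\nu(A)$ and using the identity $\langle\infun_A-\nu(A),\pi(\lambda)(\infun_B-\nu(B))\rangle=\nu(\lambda A\cap B)-\nu(A)\nu(B)$ yields the bound. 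Crucially, no amenability assumption on $\Lambda$ is needed.

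Given any $A$ with $0<\nu(A)=\alpha\leq\alpha_0$, greedily construct $\lambda_1,\ldots,\lambda_N\in\Lambda$ as follows. Set $B_0:=\emptyset$, and while $\nu(B_i)<\alpha_0$ apply the lemma with $\varepsilon=\alpha(1-\alpha_0)/2$ to obtain $\lambda_{i+1}$ with $\nu(\lambda_{i+1}A\cap B_i)\leq\alpha\nu(B_i)+\alpha(1-\alpha_0)/2$, then set $B_{i+1}:=B_i\cup\lambda_{i+1}A$. This guarantees the linear growth $\nu(B_{i+1})-\nu(B_i)\geq\alpha(1-\alpha_0)/2$, so the process terminates at some $N<2\alpha_0/(\alpha(1-\alpha_0))+1$ with $\nu(B_N)\in[\alpha_0,2\alpha_0]\subseteq[\alpha_0,\tfrac{1}{2}]$ (using $\alpha_0\leq\tfrac{1}{4}$) and $N\alpha\leq 4\alpha_0$. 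The expansion hypothesis at scale $\alpha_0$ gives $\nu(SB_N)\geq(1+c_0)\nu(B_N)$, and since $\Gamma$ and $\Lambda$ commute we have $SB_N=\bigcup_i\lambda_i(SA)$, so using that each $\lambda_i$ is m.p.\ and $e\in S$,
\[
c_0\alpha_0\;\leq\;\nu(SB_N)-\nu(B_N)\;\leq\;\sum_{i=1}^N\nu\bigl(\lambda_i(SA\setminus A)\bigr)\;=\;N\bigl(\nu(SA)-\nu(A)\bigr).
\]
Substituting $N\alpha\leq 4\alpha_0$ yields $\nu(SA)\geq(1+c_0/4)\nu(A)$, which together with the stronger bound supplied by the hypothesis for $\nu(A)\geq\alpha_0$ gives expansion in measure with the same set $S$ and constant $c_0/4$.

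The main obstacle is controlling the number $N$ of replicas: a naive covering could use arbitrarily many translates as $\alpha\to 0$, which would wipe out the multiplicative gain. The decorrelation lemma is precisely what delivers the linear rate $\nu(B_{i+1})-\nu(B_i)\geq(1-\alpha_0)\alpha/2$, ensuring that $N\alpha$ stays within a constant multiple of $\alpha_0$ independently of how small $\alpha$ is. Everything else is careful bookkeeping with the commutation relation.
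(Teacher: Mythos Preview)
Your proof is correct and takes a genuinely different route from the paper's. The paper proceeds by iterated \emph{doubling}: for a single well-chosen $\lambda$ it compares $A$ with $A\cup A\lambda$, uses the inclusion--exclusion type bound $\nu(\partial_S(C_1\cup C_2))\leq\nu(\partial_S C_1)+\nu(\partial_S C_2)-\nu(\partial_S(C_1\cap C_2))$ to derive a recursion $\fnc(\delta\alpha)\geq(1-\alpha/2)\fnc(\alpha)$ for a carefully chosen $\delta\in(4-2\sqrt 3,1)$, and then controls the infinite product $\prod_n(1-\delta^n\alpha_0/2)$ via a separate analytic lemma. Your approach is more direct: you assemble a large set $B_N$ in one pass by adding translates $\lambda_i A$ one at a time, invoking the decorrelation lemma at each step to guarantee a linear growth rate, and then apply the expansion hypothesis only once. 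This yields the clean explicit constant $c_0/4$, whereas the paper's constant depends on $\alpha_0$ through the infinite product $\Phi$. The underlying ergodic input is the same in both proofs---the paper phrases it as $\inf_\lambda\nu(A\cap A\lambda)\leq\nu(A)^2$, which is exactly your decorrelation lemma with $B=A$ and $\varepsilon\to 0$; your Alaoglu--Birkhoff justification is the same mechanism.

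One small caveat: your opening observation that $\rho$ is automatically measure-preserving is not used anywhere in your argument, and as stated it presupposes $\gamma_*\nu\ll\nu$, which the theorem does not assume (the paper explicitly notes that $\rho$ need not even preserve the measure class). Since your main inequality only uses that the $\lambda_i$ are measure-preserving, you can simply delete this remark.
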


We refer the readers to Theorem~\ref{thm:homogeneous strong ergodic} for a precise statement. Since a strongly ergodic action on a probability space is asymptotically expanding in measure---and hence asymptotically expanding ``at every scale''---we immediately recover {\cite[Lemma~10]{chifan2010ergodic}} (and \cite[Theorem~4]{abert2012dynamical}):

\begin{alphcor}
Let $\rho: \Gamma\curvearrowright (X,\nu)$ be a probability measure preserving action of a countable discrete group that commutes with an ergodic measure\=/preserving action. Then $\rho$ is strongly ergodic \emph{if and only if} it has a spectral gap.
\end{alphcor}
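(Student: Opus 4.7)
The plan is to derive this corollary as an immediate consequence of Theorem~\ref{thm:intro:homogeneous strong ergodicity} combined with the equivalences displayed in the diagram at the beginning of the introduction. Since the acting group is assumed to commute with an ergodic measure\=/preserving action, the heavy lifting will be entirely carried by that theorem, and both directions should then reduce to short pieces of bookkeeping.

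First I would handle the easy ``spectral gap $\Rightarrow$ strong ergodicity'' direction, which in fact does not require the commutation hypothesis. Since $\rho$ is measure\=/preserving, the top row of the diagram gives that $\rho$ is expanding in measure, and this trivially implies asymptotic expansion in measure (one may simply take $c(\alpha)$ and $S(\alpha)$ independent of $\alpha$ in Definition~\ref{def:intro:asymptotic expansion}). Being measure\=/preserving, $\rho$ is in particular measure\=/class\=/preserving, so Proposition~\ref{prop:strongly ergodic iff asymptotic expanding} converts asymptotic expansion in measure into strong ergodicity.

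For the converse, I would start by assuming that $\rho$ is strongly ergodic. Proposition~\ref{prop:strongly ergodic iff asymptotic expanding} then makes $\rho$ asymptotically expanding in measure, so in particular there exist a constant $c(\alpha_0)>0$ and a finite subset $S(\alpha_0)\subseteq \Gamma$ witnessing the expansion property of Definition~\ref{def:intro:asymptotic expansion} at the single scale $\alpha_0=\tfrac{1}{4}$. Since $\rho$ commutes by hypothesis with an ergodic measure\=/preserving action, Theorem~\ref{thm:intro:homogeneous strong ergodicity} applies and upgrades this asymptotic expansion at one scale to genuine expansion in measure. A last appeal to the top row of the diagram then returns the desired spectral gap.

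The only substantive step is the invocation of Theorem~\ref{thm:intro:homogeneous strong ergodicity}: all of the genuine work will have been done in proving that theorem, not in the corollary itself. The main obstacle, in other words, is the underlying homogeneity principle that, under the commutation hypothesis, asymptotic expansion at a single small scale can be boosted to full expansion in measure; once this principle is available, the corollary falls out by chaining together the equivalences already in hand.
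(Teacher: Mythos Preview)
Your proposal is correct and follows essentially the same route as the paper: the paper derives this corollary (Corollary~\ref{cor:commute}) by simply combining Proposition~\ref{prop:strongly ergodic iff asymptotic expanding} with Theorem~\ref{thm:homogeneous strong ergodic}, exactly as you do. The only cosmetic point is that the precise body version of the theorem asks for a finite \emph{symmetric} subset $S$, so when you extract $S(\alpha_0)$ from Definition~\ref{def:intro:asymptotic expansion} you should replace it by $S(\alpha_0)\cup S(\alpha_0)^{-1}$; this is harmless and does not affect your argument.
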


We remark that Theorem~\ref{thm:homogeneous strong ergodic} is more precise than the previously known results in two respects: it applies to actions $\rho$ that need not preserve the measure (nor the measure\=/class), and its proof produces an explicit bound on the expansion constant in terms of the parameters of asymptotic expansion.

\subsection{Approximating spaces}
Last but not least, we conclude this introduction by illustrating the result that prompted us into this research project. That is, the relation between asymptotic expansion in measure and asymptotic expanders (Definition~\ref{defn:asymptotic.expanders}). As mentioned at the beginning of this introduction, this relation is obtained by extending the approximating procedure developed in \cite{Vig19}. Concretely, we prove the following:

\begin{alphthm}\label{thm:intro.approximating}
Let $(X,d,\nu)$ be a locally compact metric space with a Radon probability measure, and $\{\P_n\}_{n\in \NN}$ a sequence of measurable partitions of $X$ with uniformly bounded measure ratios and $\mathrm{mesh}(\CP_n)\to 0$. Then a continuous measure\=/class\=/preserving action $\Gamma \act X$ of a countable discrete group is asymptotically expanding in measure \emph{if and only if} the associated approximating spaces $\braces{\appgraph{\P_n}}_{n \in \N}$ are a sequence of asymptotic expanders.
\end{alphthm}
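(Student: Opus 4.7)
The strategy is to adapt the approximating-space dictionary of \cite{Vig19} to the asymptotic regime: a vertex subset $V$ of $\appgraph{\P_n}$ corresponds to the measurable set $A_V := \bigcup_{P \in V} P \subseteq X$, and the uniformly bounded measure ratios of $\{\P_n\}$ ensure that the normalised counting measure on vertices is comparable, with a uniform constant, to $\nu$ restricted to unions of pieces. The edges of $\appgraph{\P_n}$ are determined by a (prescribed) finite subset $S\subseteq \Gamma$, so that the vertex boundary of $V$ corresponds, up to small errors coming from $\mathrm{mesh}(\P_n)$ and the continuity of $\rho$, to the symmetric difference of $A_V$ and its $S$-neighbourhood $\bigcup_{s\in S} s\cdot A_V$.

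\medskip\noindent\textbf{``Only if'' direction.} Assume $\rho$ is asymptotically expanding in measure. Fix $\alpha\in (0,\tfrac{1}{2}]$ and let $c=c(\alpha)>0$ and $S=S(\alpha)\subseteq \Gamma$ witness asymptotic expansion at scale $\alpha$. For every vertex subset $V\subseteq \vertex(\appgraph{\P_n})$ whose mass fraction lies in the range $[\alpha',\tfrac{1}{2}]$ (where $\alpha'$ is $\alpha$ adjusted by the measure-ratio constant), I apply Definition~\ref{def:intro:asymptotic expansion} to $A_V$ to obtain $\nu\paren{\bigcup_{s\in S} s\cdot A_V}\geq (1+c)\nu(A_V)$. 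I then use continuity of the $\Gamma$-action together with $\mathrm{mesh}(\P_n)\to 0$ to show that, for $n$ sufficiently large, $\bigcup_{s\in S} s\cdot A_V$ is contained (up to an error of measure at most $\tfrac{c}{2}\nu(A_V)$) in the union of the pieces belonging to the $S$-boundary-neighbourhood of $V$. Converting back to counting via the bounded measure ratio yields a uniform lower bound on the Cheeger ratio of $V$ inside $\appgraph{\P_n}$, which is the asymptotic expander property at scale $\alpha$. Since $\alpha$ was arbitrary, we enlarge the edge set of $\appgraph{\P_n}$ as needed to accommodate all $S(\alpha)$, or appeal to Proposition~\ref{prop:structure thm finitely generated case} to reduce to a single $S$ when $\Gamma$ is finitely generated.

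\medskip\noindent\textbf{``If'' direction.} Conversely, assume $\{\appgraph{\P_n}\}_{n\in \N}$ is a family of asymptotic expanders. Let $A\subseteq X$ be measurable with $\alpha\leq \nu(A)\leq \tfrac{1}{2}$. For each $n$, I form the discretisation $A_n := \bigcup\{P\in \P_n : \nu(P\cap A) > \tfrac{1}{2}\nu(P)\}$. Since $\nu$ is Radon and $\mathrm{mesh}(\P_n)\to 0$, a standard Lebesgue differentiation argument (using local compactness to exhaust $X$ by compact sets) shows $\nu(A\symdiff A_n)\to 0$. Setting $V_n := \{P\in \P_n : P\subseteq A_n\}$, the mass fraction of $V_n$ is (for large $n$) trapped in a window of the form $[\alpha/2, \tfrac{3}{4}]$. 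Applying the asymptotic expander hypothesis at that window's lower endpoint produces a uniform $\epsilon>0$ and a finite $S\subseteq \Gamma$ (namely the set indexing the edges) such that the $S$-boundary of $V_n$ has at least $\epsilon \cdot |V_n|$ vertices. Translating back through the bounded measure ratio yields $\nu\paren{\bigcup_{s\in S} s\cdot A_n \setminus A_n}\geq \epsilon' \nu(A_n)$; taking $n\to\infty$ and using the continuity of $\rho$ together with $\nu(A\symdiff A_n)\to 0$ transfers the inequality to $A$.

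\medskip\noindent\textbf{Main obstacle.} The principal technical difficulty is the quantifier management between the dynamical and graph-theoretic sides. On the dynamical side the pair $(c(\alpha), S(\alpha))$ may depend nontrivially on $\alpha$, whereas on the graph side one typically fixes the edge structure once and for all via a single $S$. Reconciling these two viewpoints is the delicate part: one must either allow the edge set of $\appgraph{\P_n}$ to grow with $\alpha$, or invoke the finitely-generated strengthening (Proposition~\ref{prop:structure thm finitely generated case} and Corollary~\ref{cor:quantitative version of structure thm for f.g. groups}) to fix $S$ uniformly. A secondary issue is controlling the $\symdiff$-error produced by replacing $A$ by its cell discretisation $A_n$ uniformly over large-measure sets, which requires the Radon assumption and $\mathrm{mesh}(\P_n)\to 0$ in an essential way.
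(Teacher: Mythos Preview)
Your proposal rests on a misconception about what $\appgraph{\P_n}$ is in this paper. It is not a graph whose edges are determined by a single finite $S\subseteq\Gamma$; it is a discrete \emph{metric} space (Definition~\ref{defn:approximating graph}) whose metric is built from the full proper length function on $\Gamma$, so that $[N_k(W)]_{\P_n}\supseteq B_k\cdot[W]_{\P_n}$ up to null sets for every $k$. Likewise, the definition of asymptotic expanders (Definition~\ref{defn:asymptotic.expanders}) already allows the neighbourhood radius $\fnk(\alpha)$ to depend on $\alpha$. Hence the ``main obstacle'' you identify---reconciling a varying $S(\alpha)$ with a fixed edge structure---does not exist: both sides carry a scale parameter that is allowed to vary with $\alpha$, and the two are matched by construction. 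Your appeals to Proposition~\ref{prop:structure thm finitely generated case} and to ``enlarging the edge set'' are therefore addressing a phantom difficulty, and your statement that the asymptotic expander hypothesis hands you ``a finite $S$ indexing the edges'' is simply not what the hypothesis says.

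This misconception also leads you to overcomplicate the ``only if'' direction. In the paper (Lemma~\ref{lem:asymp.expanding.action.implies.meas.asymp.exp}) this implication is essentially a one-line tautology valid for \emph{any} measurable partition, with no use of continuity, the Radon hypothesis, or $\mathrm{mesh}(\P_n)\to 0$: if $\alpha\leq\mabs{W}\leq\tfrac12$ then $\mabs{N_{\fnk(\alpha)}(W)}\geq\nu\bigparen{B_{\fnk(\alpha)}\cdot[W]_{\P_n}}>(1+\fnc(\alpha))\mabs{W}$ directly. The bounded measure ratios enter only at the end (Lemma~\ref{lem:measured case vs ordinary case}) to pass from \emph{measured} asymptotic expanders to ordinary ones. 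For the ``if'' direction your discretisation idea is in the right spirit, but the paper isolates the needed property as \emph{$k$\=/admissibility} (Definition~\ref{defn:admissible.partitions}): one must control not just $\nu(A\symdiff A_n)$ but the \emph{iterated} saturations $[\gamma_q\cdot[\cdots[\gamma_1\cdot[W_n]_{\P_n}]_{\P_n}\cdots]_{\P_n}]_{\P_n}$ against $\gamma_q\cdots\gamma_1(A)$, since it is $[N_k(W_n)]_{\P_n}$ (a union of such iterates) that must converge to $B_k\cdot A$. Continuity, local compactness and $\mathrm{mesh}(\P_n)\to 0$ are used precisely to establish this (Lemma~\ref{lem:partitions.with.infinitesimal.mesh.are.admissible}), via inner approximation by compact sets rather than Lebesgue differentiation.
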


This result can be used as an efficient and unified way to construct asymptotic expanders. In turn, this has consequences with regard to the existence of coarse embeddings into Hilbert spaces and the coarse Baum--Connes conjecture \cite{structure,intro,dynamics2}. We refer the readers to Section~\ref{sec:measured.appgraphs.and.asymp.expansion} for relevant definitions and a detailed explanation of the approximation procedure. 

It is worth noting that---albeit similar in spirit---the exposition given in Section~\ref{sec:measured.appgraphs.and.asymp.expansion} differs significantly from the original treatment of \cite{Vig19}. This is mainly due to the fact that our proof Theorem~\ref{thm:intro.approximating} passes through the definition of yet another construct, which we call \emph{measured asymptotic expanders}. This intermediate object is rather convenient, as it allows us to move harmoniously between dynamical systems and metric spaces.

As a concluding remark, we wish to stress that Theorem~\ref{thm:intro.approximating} was for us a great heuristic tool. Both Theorem~\ref{thm:intro:structure} and Theorem~\ref{thm:intro:homogeneous strong ergodicity} were inspired from analogous results for asymptotic expanders \cite{structure}, and the same goes for \cite[Theorems D,E,F]{dynamics2}. Since asymptotic expanders are families of \emph{finite} metric spaces, we found that it is generally easier to prove results in this setting first and then adapt their proof to obtain analogous results for dynamical systems. The fact that our approach naturally led us to rediscover so many recent results for strongly ergodic actions can be seen as a testament to the validity of this heuristic. 
 
\

\emph{Organisation of the paper.}
In Section~\ref{sec:prelims}, we cover some general preliminaries and review some related literature. In Section~\ref{sec:asymptotic.expansion in measure}, we define asymptotic expansion in measure and its local version. We also prove a number of technical but useful lemmas that will be used throughout. Section~\ref{sec:structure} and Section~\ref{sec:homogeneity} are devoted to providing the proofs of Theorem~\ref{thm:intro:structure} and Theorem~\ref{thm:intro:homogeneous strong ergodicity}, respectively. In Section~\ref{sec:measured.appgraphs.and.asymp.expansion}, we establish the connection between asymptotically expanding actions and asymptotic expanders via approximating spaces and prove Theorem \ref{thm:intro.approximating}.

\subsection*{Acknowledgements}
We wish to thank Amine Marrakchi for pointing out \cite{chifan2010ergodic,houdayer2017strongly,Mar18} to us and for manifesting interest in our work.
The first and third authors would like to thank Piotr Nowak and J\'{a}n \v{S}pakula for introducing them to the topic of asymptotic expansion and for several early discussions.  Finally, the second author wishes to thank Uri Bader for his encouragement and helpful comments.

\section{Preliminaries}
\label{sec:prelims}

\subsection{Measure spaces}
Let $(X,\nu)$ be a measure space. 
When $\nu(X)<\infty$, we generally assume that it is a probability space. 
In the sequel we will frequently use Radon--Nikodym derivatives, we hence work with the following:

\begin{stdassump}
 All the measures are assumed to be $\sigma$\=/finite.
\end{stdassump}

We generally call a measurable subset $A\subseteq X$ of positive finite measure a \emph{domain}.

\begin{de}\label{defn:exhaustion}
 An \emph{exhaustion} of a measure space $(X,\nu)$ is a sequence of nested measurable subsets $Y_1\subseteq Y_2\subseteq\cdots $ such that $\bigcup_{n\in\NN}Y_n=X$ up to measure zero. We denote exhaustions by $Y_n\nearrow (X,\nu)$, or simply $Y_n\nearrow X$, if the measure is clear from the context. We will be especially interested in exhaustions where each $Y_n$ has positive finite measure (\emph{i.e.}, exhaustions by domains).
\end{de}

Note that if $Y_n\nearrow (X,\nu)$ is an exhaustion and $\nu'\sim\nu$ is an equivalent measure, then $Y_n\nearrow (X,\nu')$ is an exhaustion as well. When both $\nu$ and $\nu'$ are finite, the following holds true:

\begin{lem}\label{lem:infinitesimal sequences are preserved}
 Let $\nu$ and $\nu'$ be equivalent finite measures on a space $X$ and $(A_n)_{n\in\NN}$ a sequence of measurable subsets of $X$, then $\nu(A_n)\to 0$ \emph{if and only if} $\nu'(A_n)\to 0$.
\end{lem}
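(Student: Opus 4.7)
The plan is to reduce the statement to the standard absolute continuity of the Lebesgue integral via the Radon--Nikodym theorem. Since $\nu$ and $\nu'$ are equivalent and both finite, the standing $\sigma$\=/finiteness assumption applies and we can write $\nu' = f\cdot\nu$ for some measurable $f\colon X\to (0,\infty)$ with $f\in L^1(\nu)$, and symmetrically $\nu = g\cdot\nu'$ with $g\in L^1(\nu')$.

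Assume $\nu(A_n)\to 0$. To show $\nu'(A_n) = \int_{A_n} f\,d\nu \to 0$, I would invoke the classical fact that the integral of an $L^1$\=/function is absolutely continuous with respect to its underlying measure: for every $\varepsilon>0$ there exists $\delta>0$ such that $\nu(E)<\delta$ implies $\int_E f\,d\nu <\varepsilon$. The usual proof of this fact proceeds by truncating $f$ at level $M$ chosen so that $\int_{\{f>M\}}f\,d\nu<\varepsilon/2$ (possible by dominated convergence since $f\in L^1(\nu)$), and then taking $\delta = \varepsilon/(2M)$. Feeding $A_n$ into this statement for $n$ large enough that $\nu(A_n)<\delta$ gives $\nu'(A_n)<\varepsilon$, hence $\nu'(A_n)\to 0$.

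The reverse implication is completely symmetric, using $\nu = g\cdot\nu'$ with $g\in L^1(\nu')$. There is no real obstacle here: the entire content is the absolute continuity of the integral for $L^1$\=/densities on a finite measure space, which is a one\=/line application of the Radon--Nikodym theorem together with a standard truncation argument.
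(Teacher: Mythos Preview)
Your proof is correct and is the standard argument via the Radon--Nikodym theorem and absolute continuity of the integral; the paper itself states this lemma without proof, treating it as a well\=/known fact. There is nothing to compare: your write\=/up simply supplies the routine details that the paper omits.
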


The following corollary will be used frequently in the sequel: 

\begin{cor}\label{cor:equivalent measures have uniform ratios}
 Given two equivalent finite measures $\nu$ and $\nu'$ on $X$, there exist increasing functions $\rho_-,\rho_+\colon (0,1)\to(0,1)$ such that 
 \[
  \rho_-(\alpha)\nu'(X)\leq\nu'(A)\leq\rho_+(\beta)\nu'(X)
 \]
 for every $\alpha,\beta\in(0,1)$ with $\alpha \leq \beta$, and measurable $A\subseteq X$ with $\alpha\nu(X)\leq\nu(A)\leq\beta\nu(X)$.
\end{cor}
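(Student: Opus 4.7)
The plan is to define $\rho_-$ and $\rho_+$ explicitly as infima and suprema of the ratio $\nu'(A)/\nu'(X)$ taken over the natural classes of sets, and then to use Lemma~\ref{lem:infinitesimal sequences are preserved} to show they are bounded away from $0$ and $1$. Concretely, for $\alpha,\beta \in (0,1)$, I would set
\[
 \rho_-(\alpha) \;:=\; \inf\Bigl\{\tfrac{\nu'(A)}{\nu'(X)}\;:\; A\subseteq X \text{ measurable, } \nu(A)\geq \alpha\,\nu(X)\Bigr\}
\]
and
\[
 \rho_+(\beta) \;:=\; \sup\Bigl\{\tfrac{\nu'(A)}{\nu'(X)}\;:\; A\subseteq X \text{ measurable, } \nu(A)\leq \beta\,\nu(X)\Bigr\}.
\]
Monotonicity of $\rho_-$ and $\rho_+$ is then immediate from the definitions: increasing $\alpha$ shrinks the set over which the infimum is taken (so the infimum can only grow), and increasing $\beta$ enlarges the set over which the supremum is taken (so the supremum can only grow). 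Moreover, the required inequalities $\rho_-(\alpha)\nu'(X)\leq \nu'(A) \leq \rho_+(\beta)\nu'(X)$ for $A$ with $\alpha\nu(X)\leq \nu(A)\leq \beta\nu(X)$ follow tautologically, since such an $A$ is admissible in both the defining infimum and supremum.

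The main point---and essentially the only one requiring argument---is to verify that $\rho_-(\alpha)>0$ and $\rho_+(\beta)<1$ for every $\alpha,\beta\in(0,1)$. Both verifications are by contradiction and reduce to Lemma~\ref{lem:infinitesimal sequences are preserved}. If $\rho_-(\alpha)=0$ for some $\alpha\in(0,1)$, then there is a sequence $(A_n)$ of measurable sets with $\nu(A_n)\geq \alpha\,\nu(X)>0$ but $\nu'(A_n)\to 0$; Lemma~\ref{lem:infinitesimal sequences are preserved} (applied after swapping the roles of $\nu$ and $\nu'$) then forces $\nu(A_n)\to 0$, a contradiction. For $\rho_+$, if $\rho_+(\beta)=1$ for some $\beta\in(0,1)$, I would pass to complements: take $(A_n)$ with $\nu(A_n)\leq \beta\,\nu(X)$ and $\nu'(A_n)/\nu'(X)\to 1$, and observe that the sequence $(A_n^c)$ satisfies $\nu(A_n^c)\geq (1-\beta)\nu(X)>0$ while $\nu'(A_n^c)\to 0$, again contradicting Lemma~\ref{lem:infinitesimal sequences are preserved}.

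The only mild obstacle is to guarantee that the codomain is really $(0,1)$ and not $(0,1]$ or $[0,1)$: a priori we only have $\rho_-(\alpha)\in(0,1]$ and $\rho_+(\beta)\in[0,1)$ (equality to $1$ in the first, or to $0$ in the second, can occur, e.g.\ when $\nu$ has a single large atom). This is purely cosmetic: replacing $\rho_-(\alpha)$ by $\min\{\rho_-(\alpha),1-\alpha/2\}$ and $\rho_+(\beta)$ by $\max\{\rho_+(\beta),\beta/2\}$ preserves monotonicity and the required inequalities while landing in $(0,1)$. Overall, the argument is short and the only substantive input is the qualitative statement of Lemma~\ref{lem:infinitesimal sequences are preserved}.
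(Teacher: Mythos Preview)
Your argument is correct and is exactly the route the paper has in mind: the corollary is stated without proof, as an immediate consequence of Lemma~\ref{lem:infinitesimal sequences are preserved}, and your contradiction arguments via $\nu'(A_n)\to 0$ and $\nu'(A_n^c)\to 0$ are the natural way to extract it.

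One small slip in the cosmetic patch: replacing $\rho_-(\alpha)$ by $\min\{\rho_-(\alpha),\,1-\alpha/2\}$ need not preserve monotonicity, since $1-\alpha/2$ is decreasing in $\alpha$. Use instead something like $\min\{\rho_-(\alpha),\,\alpha\}$ (the minimum of two increasing functions is increasing, it lies in $(0,1)$, and it is still a lower bound). Your fix for $\rho_+$ is fine as written.
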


Note that Lemma~\ref{lem:infinitesimal sequences are preserved} and Corollary \ref{cor:equivalent measures have uniform ratios} are trivially false for infinite measure spaces.

\subsection{Actions on measure spaces}\label{ssec:prelims:actions.on.measure.spaces}

Throughout the paper, $\Gamma$ will always denote a countable discrete group. 
In order to make some statements more quantitative and to highlight the relation between (asymptotic) expansion in measure and (asymptotic) expanders (Subsection~\ref{ssec:prelims:expanders}), we will always equip $\Gamma$ with a \emph{proper length function} $\ell: \Gamma \to \{0\}\cup\NN$, \emph{i.e.}, $\ell$ is a proper function and satisfies:
\begin{itemize}
  \item $\ell(\gamma)=0$ if and only if $\gamma=1$ (the identity element in $\Gamma$);
  \item $\ell(\gamma)=\ell(\gamma^{-1})$ for every $\gamma\in \Gamma$;
  \item $\ell(\gamma_1\gamma_2) \leq \ell(\gamma_1)+\ell(\gamma_2)$ for every $\gamma_1,\gamma_2\in \Gamma$.
\end{itemize}

It is easy to see that every $\Gamma$ admits a proper length function (see e.g. \cite[Proposition~1.2.2]{NY12}). For example, if $\Gamma$ is finitely generated and $S$ is a finite generating set with $S=S^{-1}$, we can take the length function to be the \emph{word length}:
\[
\ell(\gamma) \coloneqq \min\big\{n\bigmid \gamma=s_{i_1}s_{i_2}\ldots s_{i_n} \mbox{~where~}s_{i_k}\in S\text{ for every }k=1,\ldots, n\big\}.
\]

Any proper length function $\ell$ induces a left-invariant metric $d_\ell$ on $\Gamma$ by $d_\ell(\gamma_1,\gamma_2) \coloneqq \ell(\gamma_1^{-1}\gamma_2)$. This makes $\Gamma$ into a \emph{proper} discrete metric space in the sense that every bounded subset is finite. For each $k\in \NN$, denote by $B_k$ the closed ball with radius $k$ and centre at the identity:
\[
B_k \coloneqq \{\gamma\in \Gamma \mid \ell(\gamma) \leq k\}.
\]
It follows directly from definition that each $B_k$ is finite and symmetric (i.e., $\gamma\in B_k$ implies that $\gamma^{-1} \in B_k$), $1\in B_k$ and $B_k \cdot B_l \subseteq B_{k+l}$ for any $k,l\in \NN$.

We will be concerned with measurable actions of $\Gamma$ on $(X,\nu)$. Given $A \subseteq X$ and $K \subseteq \Gamma$, let
\[
K \cdot A \coloneqq \bigcup_{\gamma\in K} \gamma \cdot A.
\] 
Note that $A\subseteq B_k\cdot A$ for every $A\subseteq X$ and every $k\in \N$. 

Recall that a measurable action $\Gamma\curvearrowright (X,\nu)$ is \emph{measure\=/class\=/preserving} if it sends measure\=/zero sets to measure\=/zero sets.
In particular, for every $\gamma\in\Gamma$ there is a Radon--Nikodym derivative $\d \gamma_*\nu/\d\nu$ that is well\=/defined up to measure-zero sets.

\subsection{Expansion in measure}

 Let $\Gamma\curvearrowright X$ be a measurable action and $S\subseteq\Gamma$ a finite symmetric set. For any measurable subset $A\subseteq X$ we define by $\acbdry_S A\coloneqq S\cdot A\smallsetminus A$. Given $k\in \NN$, we let $\acbdry_k A\coloneqq \acbdry_{B_k}A$.
This piece of notation is useful and rather suggestive.
In fact, one should think of $\acbdry_S A$ as the ``boundary of $A$ with respect to the action by $S$''.

\begin{de}[\cite{Vig19}]\label{defn:expanding in measure}
 A measurable action $\rho\colon\Gamma\curvearrowright(X,\nu)$ of a countable discrete group $\Gamma$ on a probability measure space $(X,\nu)$ is called \emph{expanding in measure} if there exist constants $c>0$ and $k\in \NN$ such that for any measurable subset $A \subseteq X$ with $0<\nu(A) \leq \frac{1}{2}$, we have $\nu(\acbdry_k A)> c\nu(A)$. In this case, we say that $\rho$ is \emph{$(c,k)$\=/expanding in measure}.
\end{de}

\begin{rem}\label{rmk:expansion.equiv.} 
 Given a measurable action $\rho\colon\Gamma\curvearrowright(X,\nu)$, it is easy to see that expansion in measure for $\rho$ is equivalent to any of the following:
 \begin{enumerate}
  \item there exists $c>0$ and a finite subset $S\subseteq \Gamma$ such that $\nu(S\cdot A)\geq (1+c) \nu(A)$ for any measurable subset $A \subseteq X$ with $0<\nu(A) \leq \frac{1}{2}$;
  \item there exists a finitely generated subgroup $\Gamma' \leq \Gamma$ such that the restriction $\rho'\colon\Gamma'\curvearrowright(X,\nu)$ is expanding in measure;
  \item for every $c\in (0,1)$ there exists a $k\in \NN$ such that $\rho$ is $(c,k)$\=/expanding in measure (see also Corollary \ref{cor:expansion.equiv.}).
 \end{enumerate}
In particular, expansion in measure does not depend on the choice of proper length function.
 If $\Gamma$ is finitely generated and $\ell$ is a word length, this is also equivalent to:
 \begin{itemize}
  \item[(4)] there exists $c>0$ such that $\rho$ is $(c,1)$\=/expanding in measure (this is the definition used in \cite[Definition 3.1]{Vig19}).
 \end{itemize}
\end{rem}

As already mentioned, we decide to fix a length function and use the nomenclature ``$(c,k)$\=/expansion'' to stress the geometric nature of this notion. On the other hand, it is sometimes convenient to highlight the presence of a finite set in $\Gamma$ that witnesses expansion in measure (this is the approach taken Section 1, see \emph{e.g.}, Definition~\ref{def:intro:asymptotic expansion}). For this reason, we will also say that an action is \emph{$(c,S)$\=/expanding in measure} (or simply \emph{$S$\=/expanding in measure}) if $S\subset \Gamma$ is a finite symmetric subset.
We will also use the same convention for the local and asymptotic versions of expansion in measure (see below).

In an independent work, Grabowski--Máthé--Pikurko defined a ``local'' version of expansion in measure under the name of \emph{domain of expansion}: 

\begin{de}[\cite{grabowski_measurable_2016}]\label{defn:domain of expansion} 
Let $\rho\colon \Gamma\curvearrowright (X,\nu)$ be a measurable action of a countable discrete group $\Gamma$ on a measure space $(X,\nu)$. A positive finite measure subset $Y \subseteq X$ is called a \emph{domain of expansion} for $\rho$ if there exist constants $c>0$ and $k\in \NN$ such that for every measurable subset $A \subseteq Y$ with $0<\nu(A) \leq \frac{\nu(Y)}{2}$, we have
 \[
 \nu\bigparen{(B_k \cdot A)\cap Y} > \paren{1+c}\nu(A).
\]
In this case, we say that $Y$ is a \emph{domain of $(c,k)$\=/expansion}.

Given a finite symmetric $S\subseteq\Gamma$, we say that $Y$ is a \emph{domain of $(c,S)$\=/expansion} if for every measurable subset $A \subseteq X$ with $0< \nu(A) \leq \frac{\nu(X)}{2}$, we have $ \nu\bigparen{(S \cdot A)\cap Y} > \paren{1+c}\nu(A)$. We also say that $Y$ is \emph{a domain of $S$\=/expansion} if it is a domain of $(c,S)$\=/expansion for some constant $c>0$.
\end{de}

Note that when $\nu$ is finite, $\Gamma\curvearrowright (X,\nu)$ is expanding in measure \emph{if and only if} $X$ is a domain of expansion.

\begin{rmk}\label{rmk:domain of expansion grabowski}
It is easy to verify that our definition of domain of expansion is
equivalent to the one given in \cite{grabowski_measurable_2016} (the authors of \cite{grabowski_measurable_2016} only consider measure preserving actions, but their definition makes sense also for general measurable actions).
\end{rmk}

\subsection{(Local) spectral gap}
A measure\=/preserving action $\rho\colon\Gamma\curvearrowright(X,\nu)$ of a countable discrete group $\Gamma$ on a probability measure space $(X,\nu)$ has a \emph{spectral gap} if there exist constants $\kappa>0$ and $k\in \NN$ such that for every function $f\in L^2(X;\nu)$ with $\int_Xf\d\nu=0$ we have
\[
 \norm{f}_2\leq \kappa\sum_{\gamma\in B_k}\norm{\gamma\cdot f-f}_2,
\]
where $\gamma\cdot f(x)\coloneqq f(\gamma^{-1} \cdot x)$.
It can be shown that the action $\rho$ is expanding in measure \emph{if and only if} it has a spectral gap (see, \emph{e.g.}, \cite[Section 7]{Vig19}). Note that this characterisation of expansion in terms of a spectral condition only holds for measure\=/preserving actions.

\begin{rmk}
 It follows from \cite[Theorem 3.2]{houdayer2017strongly} that the characterisation of expansion in measure in terms of spectral gap also holds for action that do not preserve the measure as long as they have bounded Radon--Nikodym derivatives. We will not need this fact in this paper.
\end{rmk}

In \cite{BIG17}, Boutonnet--Ioana--Golsefidy introduced the following localised version of spectral gap:

\begin{defn}[{\cite[Definition 1.2]{BIG17}}]\label{defn:local spectral gap}
Let $\rho\colon \Gamma \act (X,\nu)$ be a measure\=/preserving action of a countable discrete group $\Gamma$ on a measure space $(X,\nu)$, and $Y \subseteq X$ a measurable subset of positive finite measure.
The action $\rho$ has \emph{local spectral gap} with respect to $Y$ if there exist constants $\kappa>0$ and $k\in \NN$ such that
\[
\|f\|_{Y,2} \leq \kappa \sum_{\gamma \in B_k} \norm{\gamma\cdot f -f}_{Y,2}
\]
for any $f \in L^2(X;\nu)$ with $\int_Y f \mathrm{d}\nu=0$. Here $\|f\|_{Y,2}$ denotes the $L^2$-norm of the restriction of $f$ to $Y$:
\[
\|f\|_{Y,2} \coloneqq \big(\int_Y |f|^2 \mathrm{d}\nu\big)^{1/2}.
\]
\end{defn}
It is clear that when $\nu$ is a probability measure, $\rho$ has spectral gap if and only if it has local spectral gap with respect to the whole $X$. Also note that it is shown in \cite[Lemma 5.2]{grabowski_measurable_2016} that a measure\=/preserving action $\rho\colon \Gamma\curvearrowright (X,\nu)$ has local spectral gap with respect to a domain $Y\subseteq X$ \emph{if and only if} $Y$ is a domain of expansion for $\rho$. This fact can also be deduced from \cite[Theorem 3.2]{houdayer2017strongly} (or by adapting the arguments of \cite[Section 7]{Vig19}). A related but independent proof will appear in \cite{dynamics2} following a study of Markov kernels associated with measure\=/class\=/preserving actions.

\

These concepts turn out to be naturally related to the classical notion of strong ergodicity.

\begin{defn}[\cite{schmidt1980asymptotically}]\label{defn: strongly ergodic actions}
Let $\rho\colon \Gamma \act (X,\nu)$ be a measure\=/class\=/preserving action of a countable discrete group $\Gamma$ on a probability space $(X,\nu)$.
The action $\rho$ is called \emph{strongly ergodic} if every sequence of measurable subsets $\{C_n\}_{n \in \N}$ in $X$ such that $\lim_{n\to \infty} \nu(C_n \symdiff \gamma C_n)=0$ for every $\gamma \in \Gamma$, must satisfy
\[
\lim_{n\to \infty} \nu(C_n)(1-\nu(C_n))=0.
\]
\end{defn}

It follows directly from Lemma~\ref{lem:infinitesimal sequences are preserved} that strong ergodicity for an action on a probability space $(X,\nu)$ only depends on the measure\=/class\footnote{More generally, strong ergodicity is in fact invariant under orbit equivalence \cite{hjorth2005rigidity,Sch81}.}  of $\nu$. That is, if $\nu'$ is another probability measure equivalent to $\nu$ then $\rho$ is strongly ergodic with respect to $\nu$ \emph{if and only if} it is strongly ergodic with respect to $\nu'$. Hence, the following is well\=/posed:

\begin{de}[\cite{ioana2017strong}]\label{strong ergodic on infinite space}
 A measure\=/class\=/preserving action of a countable discrete group $\rho\colon\Gamma\curvearrowright (X,\nu)$ on a (possibly infinite but $\sigma$-finite) measure space is \emph{strongly ergodic} if $\rho\colon\Gamma\curvearrowright (X,\nu')$ is strongly ergodic with respect to some (hence every) probability measure $\nu'$ equivalent to $\nu$.
\end{de}

Finally, recall that it is recently proved in \cite[Theorem A]{Mar18} that an ergodic measure\=/preserving action on a $\sigma$\=/finite measure space $(X,\nu)$ is strongly ergodic \emph{if and only if} it has local spectral gap with respect to some $Y\subseteq X$ with $0<\nu(Y)<\infty$.

\subsection{Metric spaces, graphs and asymptotic expanders}\label{ssec:prelims:expanders}
Given a metric space $(X,d)$ and $r\geq 0$, we denote by $N_r(Y)\coloneqq\braces{x\in X\mid d(x,Y)\leq r}$ the \emph{closed $r$\=/neighbourhood} of a subset $Y\subseteq X$, and by $|Y|\in\NN\cup\{\infty\}$ the cardinality of $Y$.

\begin{defn}[{\cite[Definition~3.12]{intro}}]\label{defn:asymptotic.expanders}
 A sequence of finite metric spaces $\braces{X_n}_{n\in\NN}$ with $\abs{X_n}\to\infty$ is called a sequence of \emph{asymptotic expanders} if there are functions $\fnc\colon (0,\frac{1}{2}]\to \RR_{>0}$ and $\fnk\colon(0,\frac{1}{2}]\to \NN$ such that for every $\alpha\in(0,\frac{1}{2}]$, we have
 \[
  \abs{N_{\fnk(\alpha)}(A)} > \paren{1+\fnc(\alpha)}\abs{A}
 \]
 for every subset $A \subseteq X_n$ with $\alpha\abs{X_n}\leq \abs{A} \leq \frac{1}{2}\abs{X_n}$. In this case, we say that $\braces{X_n}_{n\in\NN}$ is a sequence of \emph{$(\fnc,\fnk)$\=/asymptotic expanders}. 
\end{defn}

 Just for reference, we recall that a connected graph is regarded as a discrete metric space by equipping its vertex set with the edge\=/path metric. 
 For any subset of vertices $A\subseteq \CG$, we denote by $\partial_1 A\coloneqq N_1(A)\smallsetminus A$ its $1$-boundary, and we recall that the \emph{(vertex) Cheeger constant} of a finite graph $\CG$ is defined as
 \[
  h(\CG)\coloneqq\inf\Bigbraces{\,\frac{\abs{\partial_1 A}}{\abs{A}} \bigmid A\subseteq \CG,\ 0<\abs{A}\leq \frac{1}{2}\abs{\CG}}.
 \]
 A sequence of \emph{expander graphs} is a sequence of finite graphs $\braces{\CG_n}_{n\in\NN}$ with uniformly bounded degree, cardinality going to infinity and a uniform lower bound on their Cheeger constants. Equivalently, finite graphs $\{\CG_n\}_n$ form a sequence of expanders \emph{if and only if} they have uniformly bounded degree and they are $(\fnc,1)$\=/asymptotic expanders for some constant function $\fnc$.

\section{Asymptotic expansion in measure}
\label{sec:asymptotic.expansion in measure}

In this section, we introduce the key concepts of this paper, namely the notion of ``asymptotic expansion in measure'' and its localised version. This naturally generalises the notion of expansion in measure and can be regarded as a dynamical analogue of asymptotic expansion for metric spaces (see Definition~\ref{defn:asymptotic.expanders}). We also explain how asymptotic expansion in measure relates with strong ergodicity.

\subsection{Asymptotically expanding actions and strong ergodicity}

We give the following:

\begin{defn}\label{defn:asymptotic expanding in measure}
Let $\rho\colon \Gamma \act (X,\nu)$ be a measurable action of a countable discrete group $\Gamma$ on a measure space $(X,\nu)$ with $\nu(X)<\infty$. The action $\rho$ is called \emph{asymptotically expanding (in measure)} if there exist functions $\fnc\colon (0,\frac{1}{2}]\to \RR_{>0}$ and $\fnk\colon(0,\frac{1}{2}]\to \NN$ such that for every $\alpha\in(0,\frac{1}{2}]$ we have
\begin{equation}\label{eq:def.asymptotic.expansion}
 \nu\bigparen{B_{\fnk(\alpha)} \cdot A} > \paren{1+\fnc(\alpha)}\nu(A)
\end{equation}
for every measurable subset $A \subseteq X$ with $\alpha \nu(X)\leq \nu(A) \leq \frac{\nu(X)}{2}$. In this case, we say that $\rho$ is \emph{$(\fnc,\fnk)$\=/asymptotically expanding (in measure)}. 

For a fixed finite symmetric $S\subset\Gamma$, we say that $\rho$ is \emph{$(\fnc,S)$\=/asymptotically expanding (in measure)} if for every $\alpha\in(0,\frac{1}{2}]$ and measurable subset $A \subseteq X$ with $\alpha \nu(X)\leq \nu(A) \leq \frac{\nu(X)}{2}$, we have $ \nu(S \cdot A) > \paren{1+\fnc(\alpha)}\nu(A)$. We also say that $\rho$ is \emph{$S$\=/asymptotically expanding (in measure)} if it is $(\fnc,S)$\=/asymptotically expanding for some function $\fnc$.
\end{defn}

The above definition generalises the notion of expansion in measure (see Definition~\ref{defn:expanding in measure}). More precisely, a measurable action is expanding in measure \emph{if and only if} it is $(\fnc, \fnk)$\=/asymptotically expanding for some constant functions $\fnc$ and $\fnk$.

\begin{rmk}
It is easy to see that asymptotic expansion in measure does not depend on the choice of proper length functions on the countable group $\Gamma$: choosing a different proper length function will merely yield different expansion functions $\fnc$ and $\fnk$. 
\end{rmk}

\begin{rmk} 
It would be possible to extend the notion of asymptotic expansion in measure to spaces of infinite measure following the same philosophy of \cite{Vig19}. Namely, by choosing functions $\fnc$ and $\fnk$ defined on $(0, \infty)$ and requiring that \eqref{eq:def.asymptotic.expansion} holds whenever $A\subset X$ has finite measure. However, this generalisation does not seem to capture particularly interesting phenomena.
\end{rmk}

\begin{rmk}\label{non-asy exp} 
Note that $\rho$ is \emph{not} asymptotically expanding in measure \emph{if and only if} there exist an $\alpha_0>0$ and a sequence of finite measure subsets $A_n\subseteq X$ with $\alpha_0\nu(X)\leq\nu(A_n)\leq\frac{1}{2}\nu(X)$, such that for every $n\in \NN$ we have
\[
 \nu\paren{B_n\cdot A_n}\leq \bigparen{1+\frac{1}{n}}\nu(A_n).
\]
\end{rmk}

The next result reveals the connection between asymptotic expansion in measure and strong ergodicity (see Definition \ref{defn: strongly ergodic actions}):

\begin{prop}\label{prop:strongly ergodic iff asymptotic expanding}
Let $\Gamma \act (X,\nu)$ be a measure\=/class\=/preserving action of a countable discrete group $\Gamma$ on a probability space $(X,\nu)$.
Then $\rho$ is strongly ergodic \emph{if and only if} it is asymptotically expanding in measure.
\end{prop}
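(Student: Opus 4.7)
The plan is to prove both implications by contradiction. The forward direction combines the negation of asymptotic expansion (Remark~\ref{non-asy exp}) with the definition of strong ergodicity, while the converse applies the definition of asymptotic expansion directly to an alleged non-trivial almost invariant sequence. The only real subtlety---appearing in the forward direction---is converting the ``one-sided'' estimate produced by $\nu(B_n\cdot A_n\smallsetminus A_n)\to 0$ into the symmetric-difference estimate required by the definition of an almost invariant sequence.

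For the forward direction, assume $\rho$ is strongly ergodic but not asymptotically expanding. By Remark~\ref{non-asy exp}, I would fix an $\alpha_0>0$ and a sequence $A_n\subseteq X$ with $\alpha_0\leq\nu(A_n)\leq\tfrac12$ and $\nu(B_n\cdot A_n\smallsetminus A_n)\leq \nu(A_n)/n$. The plan is to show that $\{A_n\}$ is almost invariant, which contradicts strong ergodicity because $\nu(A_n)(1-\nu(A_n))\geq\alpha_0/2$. Fix $\gamma\in\Gamma$. Properness of $\ell$ forces $\gamma,\gamma^{-1}\in B_n$ for all large $n$, so $\nu(\gamma A_n\smallsetminus A_n)\to 0$ and $\nu(\gamma^{-1}A_n\smallsetminus A_n)\to 0$. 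To convert the second estimate into $\nu(A_n\smallsetminus\gamma A_n)\to 0$, I would write $A_n\smallsetminus\gamma A_n = \gamma(\gamma^{-1}A_n\smallsetminus A_n)$ and invoke the measure-class-preserving hypothesis: the push-forward $(\gamma^{-1})_*\nu$ is a probability measure equivalent to $\nu$, so Lemma~\ref{lem:infinitesimal sequences are preserved} immediately yields $\nu(\gamma E_n)\to 0$ whenever $\nu(E_n)\to 0$. Combining gives $\nu(\gamma A_n\symdiff A_n)\to 0$, establishing almost invariance and closing this half of the argument.

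For the converse, suppose $\rho$ is asymptotically expanding but not strongly ergodic, so there is a sequence $\{C_n\}$ with $\nu(\gamma C_n\symdiff C_n)\to 0$ for every $\gamma\in\Gamma$ while $\nu(C_n)(1-\nu(C_n))\not\to 0$. After passing to a subsequence and optionally replacing $C_n$ by $X\smallsetminus C_n$---which retains almost invariance since $\gamma(X\smallsetminus C_n)\symdiff(X\smallsetminus C_n)=\gamma C_n\symdiff C_n$---I may assume $\alpha_0\leq\nu(C_n)\leq\tfrac12$ for some $\alpha_0>0$. Applying Definition~\ref{defn:asymptotic expanding in measure} at scale $\alpha_0$ then yields
\[
\nu(B_{\fnk(\alpha_0)}\cdot C_n\smallsetminus C_n)\;\geq\;\fnc(\alpha_0)\,\nu(C_n)\;\geq\;\fnc(\alpha_0)\alpha_0\;>\;0
\]
for every $n$. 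On the other hand $B_{\fnk(\alpha_0)}\cdot C_n\smallsetminus C_n\subseteq\bigcup_{\gamma\in B_{\fnk(\alpha_0)}}(\gamma C_n\symdiff C_n)$ is a union over a \emph{finite} set, whose measure tends to $0$ by almost invariance---contradiction.

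The main obstacle is the single step in the forward direction where I deduce $\nu(\gamma E_n)\to 0$ from $\nu(E_n)\to 0$: this is precisely where the measure-class-preserving hypothesis enters, via Lemma~\ref{lem:infinitesimal sequences are preserved}, and it is the reason the equivalence does not extend verbatim to arbitrary measurable actions. Every other step is a straightforward unpacking of the definitions together with Remark~\ref{non-asy exp}.
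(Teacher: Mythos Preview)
Your proof is correct and follows essentially the same approach as the paper's: the forward direction uses Remark~\ref{non-asy exp} to produce the sequence $A_n$, the identity $A_n\smallsetminus\gamma A_n=\gamma(\gamma^{-1}A_n\smallsetminus A_n)$, and measure\=/class preservation (the paper invokes this directly rather than citing Lemma~\ref{lem:infinitesimal sequences are preserved}, but the content is the same); the converse bounds $\nu(B_k\cdot C_n\smallsetminus C_n)$ by $\sum_{\gamma\in B_k}\nu(\gamma C_n\symdiff C_n)$ exactly as you do. Your treatment is slightly more explicit about the subsequence/complement reduction in the converse, but the arguments are otherwise identical.
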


\begin{proof}
\emph{Necessity}: Assume that $\rho$ is not asymptotically expanding, then there exist $\alpha_0\in (0,1/2)$ and a sequence of measurable subsets $\{A_n\}_{n \in \N}$ in $X$ with $\alpha_0 \leq \nu(A_n) \leq 1/2$ such that $\nu(B_n \cdot A_n)<(1+1/n)\nu(A_n)$ (see Remark~\ref{non-asy exp}). 

For any $\gamma \in \Gamma$, take a large $N \in \N$ such that $\gamma \in B_N$.
Hence for any $n>N$, we have $\nu(\gamma \cdot A_n \smallsetminus A_n) < \frac{1}{n} \nu(A_n)\leq \frac{1}{n}$. Similarly, we also have $\nu(\gamma^{-1} \cdot A_n \smallsetminus A_n) < \frac{1}{n}$. Since the action is measure\=/class\=/preserving, we deduce that 
\[
 \nu(A_n\smallsetminus \gamma\cdot A_n)=\nu(\gamma\cdot \paren{\gamma^{-1}\cdot A_n\smallsetminus A_n})\xrightarrow{\,n\to\infty\,}0,
\]
and hence
\[
\nu(\gamma \cdot A_n \symdiff A_n) = \nu(\gamma \cdot A_n \smallsetminus A_n) + \nu(A_n \smallsetminus \gamma \cdot A_n) \xrightarrow{\,n\to\infty\,}0.
\]
This is a contradiction to the fact that $\nu(A_n)(1-\nu(A_n)) \geq \alpha_0/2>0$. Hence $\rho$ is not strongly ergodic.

\emph{Sufficiency}: Assume that $\rho$ is not strongly ergodic, then there exist $\alpha_0>0$ and a sequence of measurable subsets $\{C_n\}_{n\in \N}$ with $\lim_{n\to \infty} \nu(C_n \symdiff \gamma C_n)=0$ for every $\gamma \in \Gamma$, while $\nu(C_n)(1-\nu(C_n))\geq \alpha_0$ for each $n$. Without loss of generality, we can assume that $\nu(C_n) \leq \frac{1}{2}$.
Hence for every $k\in \NN$, we have 
\[
\nu(B_k\cdot C_n)
 \leq \nu(C_n)+\sum_{\gamma\in B_k}\nu(\gamma C_n\smallsetminus C_n)
 \leq \nu(C_n) + \sum_{\gamma\in B_k}\nu(\gamma C_n\symdiff C_n)\to \nu(C_n),
\]
as $n \to \infty$. This is a contradiction to the assumption that $\rho$ is asymptotically expanding in measure.
\end{proof}

\begin{rmk}
 Classically, strong ergodicity is only defined for measure-class-preserving actions. In this paper we follows this convention and only refer to strong ergodicity when dealing with measure\=/class\=/preserving actions. On the other hand, Definition~\ref{defn: strongly ergodic actions} also makes formal sense for general measurable actions. The ``sufficiency'' part in the proof of Proposition~\ref{prop:strongly ergodic iff asymptotic expanding} works in this extended setting as well. This shows that asymptotic expansion in measure is a formally stronger property than strong ergodicity.
\end{rmk}

\subsection{Domains of asymptotic expansions}\label{ssec:domain of asymp exp}
We will now introduce the localised version of asymptotic expansion in measure, which is also the asymptotic version of Definition~\ref{defn:domain of expansion}. This notion will be crucial for the structure results in Section~\ref{sec:structure}.

\begin{de}\label{defn:domain of asymp expansion} 
Let $\rho\colon \Gamma\curvearrowright (X,\nu)$ be a measurable action of a countable discrete group $\Gamma$ on a measure space $(X,\nu)$. A positive finite measure subset $Y \subseteq X$ is called a \emph{domain of asymptotic expansion} for $\rho$ if there exist functions $\fnc\colon (0,\frac{1}{2}]\to \RR_{>0}$ and $\fnk\colon(0,\frac{1}{2}]\to \NN$ such that for every $\alpha\in(0,\frac{1}{2}]$ and measurable $A \subseteq Y$ with $\alpha\nu(Y)\leq \nu(A) \leq \frac{\nu(Y)}{2}$, we have
\[
 \nu\bigparen{(B_{\fnk(\alpha)} \cdot A)\cap Y} > \paren{1+\fnc(\alpha)}\nu(A).
\]
In this case, we say that $Y$ is a \emph{domain of $(\fnc,\fnk)$\=/asymptotic expansion}.

For a fixed finite symmetric $S\subset\Gamma$, we say that $Y$ is a \emph{domain of $(\fnc,S)$\=/asymptotic expansion} if for every $\alpha\in(0,\frac{1}{2}]$ and measurable subset $A \subseteq X$ with $\alpha \nu(X)\leq \nu(A) \leq \frac{\nu(X)}{2}$, we have $ \nu\bigparen{(S \cdot A)\cap Y} > \paren{1+\fnc(\alpha)}\nu(A)$. We also say that $Y$ is a \emph{domain of $S$\=/asymptotic expansion} if it is a domain of $(\fnc,S)$\=/asymptotic expansion for some function $\fnc$.
\end{de}

Obviously if $\nu(X)$ is finite, then $X$ is a domain of (asymptotic) expansion \emph{if and only if} $\rho$ is (asymptotically) expanding in measure. In particular, all the statements that are true for domains of (asymptotic) expansion are also true for actions that are (asymptotically) expanding in measure.

The following technical lemma is an elementary but useful observation that will allow us to deal more easily with subsets of measure greater than $\frac{1}{2}\nu(Y)$.

\begin{lem}\label{lem:annoying 1/2 upper bound}
Let  $Y\subseteq X$ be a domain of $(\fnc,\fnk)$\=/asymptotic expansion for a measurable action $\Gamma\curvearrowright (X,\nu)$ on a measure space $(X,\nu)$. Then there exist functions $\fnb\colon [\frac{1}{2},1)\to \RR_{>0}$ and $\fnh\colon[\frac{1}{2},1)\to \NN$ such that for every $\beta\in[\frac{1}{2},1)$, we have
\[
 \nu\bigparen{(B_{\fnh(\beta)} \cdot A)\cap Y} > \paren{1+\fnb(\beta)}\nu(A)
\]
 for every measurable subset $A\subseteq Y$ with $\frac{1}{2}\nu(Y)\leq \nu(A)\leq\beta \nu(Y)$. Furthermore, if $\fnk\equiv k$ is constant, then $\fnh$ can be chosen to be the same constant $k$.
\end{lem}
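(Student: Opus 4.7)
The plan is to fix $A\subseteq Y$ with $\frac{1}{2}\nu(Y)\leq \nu(A)\leq \beta\nu(Y)$, set
\[
h := \fnk\bigparen{(1-\beta)/2}, \qquad A_1 := (B_h\cdot A)\cap Y,
\]
and control the measure of $A_1$ by studying the two complements $C_0 := Y\smallsetminus A$ and $C_1 := Y\smallsetminus A_1 \subseteq C_0$ (note $\nu(C_0)\leq \tfrac{1}{2}\nu(Y)$ since $\nu(A)\geq\tfrac{1}{2}\nu(Y)$). The point is to translate the ``upward'' control over $A$ (which the hypothesis does \emph{not} directly give) into a ``downward'' control over $C_1$, which falls exactly into the regime covered by the hypothesis.

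The key geometric observation, which I will verify first, is that
\[
 (B_h\cdot C_1)\cap Y \;\subseteq\; C_0.
\]
This follows from the symmetry $B_h = B_h^{-1}$: by construction of $A_1$ one has $(B_h\cdot A)\cap C_1 = \emptyset$, and transporting this disjointness through $B_h^{-1}$ yields $(B_h\cdot C_1)\cap A = \emptyset$, hence the inclusion above since $C_1\subseteq Y$.

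Next I will split into two cases according to the size of $C_1$.
If $\nu(C_1) < \tfrac{1-\beta}{2}\nu(Y)$, then $\nu(A_1) > \tfrac{1+\beta}{2}\nu(Y) \geq \tfrac{1+\beta}{2\beta}\nu(A)$, which already gives expansion by a factor $\tfrac{1-\beta}{2\beta}$. Otherwise $\nu(C_1)\in\bigbrack{\tfrac{1-\beta}{2}\nu(Y),\,\tfrac{1}{2}\nu(Y)}$, so the asymptotic expansion hypothesis applies to $C_1$ with parameter $\alpha = (1-\beta)/2$ and ball radius $\fnk(\alpha)=h$, yielding
\[
 \nu(C_0) \;\geq\; \nu\bigparen{(B_h\cdot C_1)\cap Y} \;>\; \bigparen{1+\fnc((1-\beta)/2)}\nu(C_1).
\]
Rearranging and using $\nu(C_0)\geq (1-\beta)\nu(Y)$ together with $\nu(A)\leq\beta\nu(Y)$, I will obtain $\nu(A_1)/\nu(A)-1 > \frac{1-\beta}{\beta}\cdot\frac{\fnc((1-\beta)/2)}{1+\fnc((1-\beta)/2)}$, again a positive bound depending only on $\beta$.

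Setting $\fnh(\beta):=\fnk((1-\beta)/2)$ and letting $\fnb(\beta)$ be the minimum of the two expansion constants produced in the two cases completes the proof; moreover, when $\fnk\equiv k$ is constant the choice $\fnh(\beta)=\fnk((1-\beta)/2)=k$ is independent of $\beta$, which gives the ``furthermore'' claim for free. The only step where I expect to have to be careful is the key inclusion, since this is where the symmetry of $B_h$ is being exploited in a non\=/trivial way; the rest is bookkeeping on measures.
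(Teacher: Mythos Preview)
Your proof is correct and follows essentially the same approach as the paper: both set $\fnh(\beta)=\fnk((1-\beta)/2)$, consider the complement $D=C_1=Y\smallsetminus\bigl((B_h\cdot A)\cap Y\bigr)$, split according to whether $\nu(C_1)$ lies below or above $\tfrac{1-\beta}{2}\nu(Y)$, and in the second case apply the asymptotic expansion hypothesis to $C_1$ together with the symmetry of $B_h$. The only cosmetic difference is in how the symmetry step is packaged: the paper records $B_h\cdot D\smallsetminus D\subseteq B_h\cdot A\smallsetminus A$ (inside $Y$) and bounds the boundary directly, whereas you record $(B_h\cdot C_1)\cap Y\subseteq C_0$ and pass through $\nu(C_0)>(1+c)\nu(C_1)$; this leads to the constant $\frac{1-\beta}{\beta}\cdot\frac{c}{1+c}$ in Case~2 instead of the paper's $\frac{(1-\beta)c}{2\beta}$, but both are positive and depend only on $\beta$.
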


\begin{proof}
 Assume for simplicity that $\nu(Y)=1$. For every $\beta \in[\frac{1}{2},1)$, we let 
\begin{equation}\label{eq:hbar}
  \fnh(\beta)\coloneqq \fnk\Bigparen{\frac{1-\beta}{2}}.
\end{equation}
 Given any measurable subset $A\subseteq Y$ with $\frac{1}{2}\leq \nu(A)\leq \beta$, let $D\coloneqq Y\smallsetminus \paren{B_{\fnh(\beta)}\cdot A}$.
If $\nu(D)< \frac{1-\beta}{2}$, then $\nu\bigparen{(B_{\fnh(\beta)} \cdot A)\cap Y}> \frac{1+\beta}{2} \geq \frac{1+\beta}{2\beta}\nu(A)$. 

On the other hand if $\nu(D)\geq \frac{1-\beta}{2}$, then it follows from our choice of $\fnh(\beta)$ that $\nu\bigparen{(B_{\fnh(\beta)} \cdot D)\cap Y}>(1+c_\beta)\nu(D)$ where $c_\beta\coloneqq \fnc\bigparen{\frac{1-\beta}{2}}>0$. Since $B_{\fnh(\beta)}$ is symmetric, we have $B_{\fnh(\beta)}\cdot D\smallsetminus D\subseteq B_{\fnh(\beta)}\cdot A\smallsetminus A$ and it follows that
 \[
  \nu\bigparen{(B_{\fnh(\beta)} \cdot A \smallsetminus A)\cap Y} \geq  \nu\bigparen{(B_{\fnh(\beta)} \cdot D)\cap Y} - \nu(D)> c_\beta\nu(D)\geq c_\beta\frac{1-\beta}{2\beta}\nu(A).
 \]
Hence, letting $\fnb(\beta)\coloneqq\min\bigbraces{\frac{1-\beta}{2\beta}\,,\,c_\beta\frac{1-\beta}{2\beta}}$ yields the desired function $\fnb$. The ``Furthermore'' part is clear from \eqref{eq:hbar}.
\end{proof} 

The following lemma 
shows that the notion of asymptotic expansion in measure depends only on the measure\=/class of $\nu$:

\begin{lem}\label{lem:asymptotic expansion is invariant under equivalence}
 Let $\nu$ and $\nu'$ be two equivalent (possibly infinite) measures on $X$, and $\rho\colon\Gamma\curvearrowright X$ a measurable action. If $Y\subseteq X$ is a measurable subset of positive finite $\nu$ and $\nu'$\=/ measure, then it is a domain of asymptotic expansion with respect to $\nu$ \emph{if and only if} it is a domain of asymptotic expansion with respect to $\nu'$.
\end{lem}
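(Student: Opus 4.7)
The plan is to prove one direction and invoke symmetry for the other. So assume $Y$ is a domain of $(\fnc,\fnk)$\=/asymptotic expansion with respect to $\nu$ and set out to produce functions witnessing asymptotic expansion with respect to $\nu'$. The key observation is that since $Y$ has positive finite measure for both $\nu$ and $\nu'$, the restrictions $\nu|_Y$ and $\nu'|_Y$ are equivalent finite measures on $Y$. Applying Corollary~\ref{cor:equivalent measures have uniform ratios} twice (with the roles of $\nu$ and $\nu'$ swapped), I obtain four increasing functions $\rho_-,\rho_+,\rho_-',\rho_+'\colon(0,1)\to(0,1)$ that transfer fractional $\nu$\=/measure bounds of subsets of $Y$ into fractional $\nu'$\=/measure bounds and vice versa.

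Next I fix $\alpha' \in (0,\tfrac{1}{2}]$ and a measurable $A \subseteq Y$ with $\alpha'\nu'(Y)\leq \nu'(A)\leq \tfrac{1}{2}\nu'(Y)$. The corollary yields
\[
\rho_-(\alpha')\,\nu(Y)\leq \nu(A)\leq \rho_+(\tfrac{1}{2})\,\nu(Y),
\]
where $\rho_+(\tfrac{1}{2}) < 1$. I then split into two cases depending on whether $\nu(A)\leq \tfrac{1}{2}\nu(Y)$ or not. In the first case, the asymptotic expansion hypothesis applies directly to $A$ at scale $\rho_-(\alpha')$. In the second case, I invoke Lemma~\ref{lem:annoying 1/2 upper bound} with $\beta = \rho_+(\tfrac{1}{2})$ (which is strictly less than $1$, so the lemma is applicable). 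Combining the two cases, there exist $k\in\NN$ and $c>0$ depending only on $\alpha'$ (through $\rho_-$ and $\rho_+$) such that
\[
\nu\bigparen{(B_k\cdot A)\cap Y} > (1+c)\,\nu(A).
\]

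Finally I translate this $\nu$\=/expansion back to $\nu'$. Setting $D\coloneqq \bigparen{(B_k\cdot A)\cap Y}\smallsetminus A$, I have $\nu(D) > c\,\nu(A)\geq c\,\rho_-(\alpha')\,\nu(Y)$. Capping the quantity $c\,\rho_-(\alpha')$ below $1$ if necessary (replacing it by $\min\{c\,\rho_-(\alpha'),\tfrac{1}{2}\}$), I can apply Corollary~\ref{cor:equivalent measures have uniform ratios} in the reverse direction to conclude that $\nu'(D)\geq \rho_-'\bigparen{\min\{c\,\rho_-(\alpha'),\tfrac{1}{2}\}}\,\nu'(Y)\geq \rho_-'\bigparen{\min\{c\,\rho_-(\alpha'),\tfrac{1}{2}\}}\,\nu'(A)$, using $\nu'(A)\leq \nu'(Y)$. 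Hence
\[
\nu'\bigparen{(B_k\cdot A)\cap Y} = \nu'(A)+\nu'(D) \geq \bigparen{1+\rho_-'\bigparen{\min\{c\,\rho_-(\alpha'),\tfrac{1}{2}\}}}\nu'(A),
\]
and dividing the added constant by two yields the strict inequality required by Definition~\ref{defn:domain of asymp expansion}. This provides the desired $\fnc'$ and $\fnk'$ for $\nu'$.

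There is no serious mathematical obstacle here; the whole proof is a careful exercise in parameter translation. The only thing requiring attention is keeping track of the dependencies (in particular making sure the arguments of $\rho_-'$ remain in $(0,1)$, which is why the truncation is inserted), and handling the case $\nu(A)>\tfrac{1}{2}\nu(Y)$ via Lemma~\ref{lem:annoying 1/2 upper bound}.
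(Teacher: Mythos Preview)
Your proof is correct and follows essentially the same approach as the paper: use Corollary~\ref{cor:equivalent measures have uniform ratios} to transfer the $\nu'$\=/bounds on $A$ into $\nu$\=/bounds, invoke Lemma~\ref{lem:annoying 1/2 upper bound} to handle the case $\nu(A)>\tfrac12\nu(Y)$, and then apply Corollary~\ref{cor:equivalent measures have uniform ratios} again to convert the resulting $\nu$\=/lower bound on the boundary back into a $\nu'$\=/bound. The only cosmetic differences are that the paper normalises $\nu(Y)=\nu'(Y)=1$ and merges your two cases into a single $k'\coloneqq\fnk(\alpha)+\fnh(\beta)$, $c'\coloneqq\min\{\fnc(\alpha),\fnb(\beta)\}$ rather than splitting on $\nu(A)\lessgtr\tfrac12\nu(Y)$.
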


\begin{proof}
 Up to rescaling, we can assume that $\nu(Y)=\nu'(Y)=1$. Assume that $Y$ is a domain of $(\fnc,\fnk)$\=/asymptotic expansion for $\nu$. For every fixed $\alpha'$, it follows from Corollary~\ref{cor:equivalent measures have uniform ratios} that there exist constants $0<\alpha\leq\frac{1}{2}<\beta<1$ such that $\alpha\leq\nu(A)\leq\beta$ for every measurable $A\subseteq Y$ with $\alpha'\leq\nu'(A)\leq\frac{1}{2}$.
 
Let $\fnb, \fnh$ be the functions obtained in Lemma~\ref{lem:annoying 1/2 upper bound}, and set $k'\coloneqq\fnk(\alpha)+\fnh(\beta)$ and $c'\coloneqq\min\braces{\fnc(\alpha),\fnb(\beta)}$. We obtain that for every measurable $A\subseteq Y$ with $\alpha'\leq\nu'(A)\leq\frac{1}{2}$:
 \[
  \nu\bigparen{(B_{k'}\cdot A \cap Y) \smallsetminus A}> c'\nu(A)\geq c'\alpha.
 \]
 We can then use Corollary~\ref{cor:equivalent measures have uniform ratios} once again to produce a uniform lower bound on $\nu'\paren{(B_{k'}\cdot A\cap Y)\smallsetminus A}$ as well.
\end{proof}

Asymptotic expansion in measure is also preserved under taking subsets:

\begin{lem}\label{lem:subsets of domains of as exp are domains}
 Let $\rho\colon \Gamma \act (X,\nu)$ be a measurable action and $Y\subseteq X$ a domain of asymptotic expansion. Then any measurable subset $Y'\subseteq Y$ with $\nu(Y')>0$ is a domain of asymptotic expansion.
\end{lem}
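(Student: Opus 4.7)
My plan is to iterate the asymptotic expansion of $Y$ starting from $A'\subseteq Y'$, until the image $B_K\cdot A'$ saturates all but a small portion of $Y$; that leftover will then be too small to hide $Y'\smallsetminus A'$, forcing $B_K\cdot A'$ to cover at least a quarter of $Y'$ and hence to grow $\nu(A')$ by a factor of $3/2$.

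Concretely, I would set $t\coloneqq \nu(Y')/\nu(Y)\in(0,1]$, dismiss the trivial case $t=1$, and fix $\alpha'\in(0,1/2]$ together with $A'\subseteq Y'$ such that $\alpha'\nu(Y')\leq \nu(A')\leq \nu(Y')/2$. Letting $\alpha_0\coloneqq \alpha't$ yields $\alpha_0\nu(Y)\leq \nu(A')\leq(t/2)\nu(Y)\leq \nu(Y)/2$. I would then extract fixed constants $c_0\coloneqq \fnc(\alpha_0)$ and $k_0\coloneqq \fnk(\alpha_0)$ from the asymptotic expansion of $Y$, and apply Lemma~\ref{lem:annoying 1/2 upper bound} with $\beta\coloneqq 1-t/4\in[1/2,1)$ to obtain $c_1\coloneqq \fnb(1-t/4)>0$ and $k_1\coloneqq \fnh(1-t/4)$; setting $c_*\coloneqq \min(c_0,c_1)$ and $k_*\coloneqq \max(k_0,k_1)$ gives uniform constants for the iteration.

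The iteration proceeds by inductively setting $A_0\coloneqq A'$ and $A_{n+1}\coloneqq (B_{k_*}\cdot A_n)\cap Y$. Since $\nu(A_n)$ is non-decreasing and remains at least $\alpha_0\nu(Y)$, whenever $\nu(A_n)\leq(1-t/4)\nu(Y)$, either Definition~\ref{defn:domain of asymp expansion} applied with parameter $\alpha_0$ (when $\nu(A_n)\leq\nu(Y)/2$) or Lemma~\ref{lem:annoying 1/2 upper bound} with $\beta=1-t/4$ (when $\nu(A_n)>\nu(Y)/2$) gives $\nu(A_{n+1})\geq(1+c_*)\nu(A_n)$; note that enlarging the ball radius from $k_0$ or $k_1$ up to $k_*$ can only enlarge the image. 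Hence after $N\coloneqq\lceil\log((1-t/4)/\alpha_0)/\log(1+c_*)\rceil$ iterations one must have $\nu(A_N)>(1-t/4)\nu(Y)$. A simple induction gives $A_n\subseteq(B_{nk_*}\cdot A')\cap Y$, so setting $K\coloneqq Nk_*$ yields $\nu\bigparen{(B_K\cdot A')\cap Y}>(1-t/4)\nu(Y)$.

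The final step bounds the leftover: $\nu(Y\smallsetminus B_K\cdot A')<\nu(Y')/4$ combined with $\nu(Y'\smallsetminus A')\geq \nu(Y')/2$ gives
\[
 \nu\bigparen{(B_K\cdot A')\cap(Y'\smallsetminus A')}\geq \nu(Y'\smallsetminus A')-\nu(Y')/4\geq \nu(Y')/4\geq \nu(A')/2,
\]
whence $\nu\bigparen{(B_K\cdot A')\cap Y'}\geq(3/2)\nu(A')$, showing $Y'$ is a domain of $(\fnc',\fnk')$-asymptotic expansion with $\fnc'\equiv 1/2$ and $\fnk'(\alpha')\coloneqq K$ depending on $\alpha'$ and the fixed data $Y,Y'$. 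The main obstacle is securing a uniform positive lower bound on the per-iteration growth factor independent of $A'$: this is exactly what motivates the constants above — the expansion hypothesis applied with the fixed parameter $\alpha_0$ remains valid at every step because $\nu(A_n)$ never drops below $\alpha_0\nu(Y)$, and Lemma~\ref{lem:annoying 1/2 upper bound} supplies the analogous control in the regime $\nu(A_n)>\nu(Y)/2$.
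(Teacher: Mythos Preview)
Your proof is correct and follows essentially the same strategy as the paper's: iterate the asymptotic expansion of $Y$ (using Lemma~\ref{lem:annoying 1/2 upper bound} to handle the regime above $\tfrac12\nu(Y)$) until $(B_K\cdot A')\cap Y$ saturates all but a small portion of $Y$, then deduce growth within $Y'$. The only cosmetic differences are that you split the two regimes explicitly and target the threshold $(1-t/4)\nu(Y)$ with resulting constant $\fnc'\equiv\tfrac12$, whereas the paper combines the regimes into a single step and uses an unspecified threshold $\beta=\nu(Y\smallsetminus Y')+\epsilon$ with $\tfrac{\nu(Y')}{2}<\epsilon<\nu(Y')$; your choice corresponds to $\epsilon=\tfrac34\nu(Y')$.
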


\begin{proof}
Without loss of generality, we can assume that $\nu(Y)=1$. Fixing $\alpha\in (0,\frac{\nu(Y')}{2}]$, we choose an $\epsilon$ with $\frac{\nu(Y')}{2}<\epsilon< \nu(Y')$ and let $\beta\coloneqq \nu(Y\smallsetminus Y')+\epsilon$.
If we can show that there exists a constant $k\in \NN$ such that for any measurable $A\subseteq Y'$ with $\alpha\nu(Y')\leq \nu(A)\leq\frac{\nu(Y')}{2}$, we have $\nu\bigparen{(B_{k}\cdot A)\cap Y} > \beta$, it follows that
\[
  \nu\bigparen{(B_{k}\cdot A)\cap Y'} 
  >\beta-\nu(Y\smallsetminus Y')
  =\epsilon\geq \frac{2\epsilon}{\nu(Y')}\nu(A)
  =\big( 1+\bigparen{\frac{2\epsilon}{\nu(Y')}-1} \big)\nu(A).
\]
Since we chose $\epsilon> \frac{\nu(Y')}{2}$, the constant $\frac{2\epsilon}{\nu(Y')}-1$ is positive and therefore the proof would be complete.

 It thus remains to find such a $k$. 
 By Lemma~\ref{lem:annoying 1/2 upper bound}, there exist $k_0 \in \NN$ and $b>0$ such that for any measurable $A'\subseteq Y$ with $\alpha \leq \nu(A') \leq \beta$, then 
\[
 \nu\bigparen{(B_{k_0}\cdot A')\cap Y} > (1+b)\nu(A') \geq (1+b)\alpha.
\]

 Let $A\subseteq Y'$ be any measurable subset with $\alpha \nu(Y') \leq \nu(A)\leq\frac{\nu(Y')}{2}$. If $\nu\bigparen{(B_{k_0}\cdot A)\cap Y} \leq \beta$, then we have:
\[
 \nu\bigparen{(B_{2k_0}\cdot A)\cap Y}
 \geq\nu\bigparen{\paren{B_{k_0}\cdot(B_{k_0}\cdot A\cap Y)}\cap Y}
 >(1+b)\nu\paren{(B_{k_0}\cdot A)\cap Y} \geq (1+b)^2 \alpha.
\]
Inductively, we see that if $\nu\bigparen{(B_{mk_0}\cdot A)\cap Y}\leq \beta$ then
\[
\nu\bigparen{(B_{(m+1)k_0}\cdot A)\cap Y} \geq (1+b)^{m+1} \alpha.
\]
Taking the logarithm, we find a $m_0\in\NN$ depending only on $\alpha, \beta$ and $b$ such that the constant $k\coloneqq m_0k_0$ satisfies our requirements.
\end{proof}

\begin{rmk}\label{robust}
Both Lemma \ref{lem:asymptotic expansion is invariant under equivalence} and \ref{lem:subsets of domains of as exp are domains} fail if one replaces ``domain of asymptotic expansion'' by ``domain of expansion''. This shows that \emph{asymptotic} expansion is more flexible and better\=/behaved under transformations than regular expansion.
\end{rmk}

The next lemma shows that for a domain of $(\fnc,\fnk)$\=/(asymptotic) expansion, by choosing a larger $\fnk$ it is always possible to require the function $\fnc$ to be an arbitrary constant. This will be used later in our structure result.

\begin{lem}\label{lem:equiv.defn.for.asymp.expansion.domain}
Let $\rho\colon \Gamma\curvearrowright (X,\nu)$ be a measurable action on a measure space $(X,\nu)$, and $Y\subseteq X$ a domain. Then:
\begin{enumerate}
  \item $Y$ is a domain of expansion \emph{if and only if} for every $c\in(0,1)$, there exists $k\in \NN$ such that for every measurable $A \subseteq Y$ with $0<\nu(A) \leq \frac{\nu(Y)}{2}$, we have
$\nu\bigparen{(B_k \cdot A)\cap Y} > \paren{1+c}\nu(A)$.
  \item $Y$ is a domain of asymptotic expansion \emph{if and only if} for every $c\in(0,1)$ and $\alpha\in (0,\frac{1}{2}]$, there exists $k_\alpha \in \NN$ such that for every measurable $A \subseteq Y$ with $\alpha\nu(Y)\leq\nu(A) \leq \frac{\nu(Y)}{2}$, we have $\nu\bigparen{(B_{k_\alpha} \cdot A)\cap Y} > \paren{1+c}\nu(A)$.
\end{enumerate}
\end{lem}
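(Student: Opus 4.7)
The ``if'' directions of both (1) and (2) are immediate: specialising the hypothesis to, say, $c = 1/2$ directly yields constants/functions satisfying Definitions \ref{defn:domain of expansion} and \ref{defn:domain of asymp expansion}. For the ``only if'' directions, my plan is to iterate the single-step (asymptotic) expansion so as to amplify its expansion constant, using Lemma \ref{lem:annoying 1/2 upper bound} to keep expanding past the threshold $\nu(Y)/2$. I will treat (1) and (2) in parallel: the arguments differ only in whether the single-step expansion data depend on $\alpha$.

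Concretely, fix $c\in(0,1)$ (and, for (2), also $\alpha\in(0,\frac{1}{2}]$), let $c_0 := \fnc(\alpha)$ and $k_0 := \fnk(\alpha)$ (constants in case (1)), and set $\tau := (1+c)/2 \in (\frac{1}{2}, 1)$. Applying Lemma \ref{lem:annoying 1/2 upper bound} at $\beta = \tau$ produces $b_0 > 0$ and $h_0 \in \NN$; in case (1), the ``Furthermore'' clause allows me to take $h_0 = k_0$. Let $c_1 := \min\{c_0, b_0\}$ and $k_1 := \max\{k_0, h_0\}$ (so $k_1 = k_0$ in (1)). Combining the hypothesis with Lemma \ref{lem:annoying 1/2 upper bound} then yields the unified single-step estimate
\[
\nu\bigparen{(B_{k_1}\cdot A')\cap Y} > (1+c_1)\nu(A')
\]
for every $A'\subseteq Y$ whose measure lies in the relevant range and is at most $\tau\nu(Y)$.

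I then iterate: set $A_0 := A$ and $A_{m+1} := (B_{k_1}\cdot A_m)\cap Y$, so that $A_m\subseteq(B_{mk_1}\cdot A)\cap Y$ and, since $1\in B_{k_1}$, the sequence is non-decreasing. Choose $M\in\NN$ with $(1+c_1)^M \geq 1+c$ and set $k := Mk_1$ (or $k_\alpha := Mk_1$ for (2)). A dichotomy finishes the argument. If $\nu(A_m)\leq\tau\nu(Y)$ for all $m<M$, then iterating the unified estimate gives $\nu(A_M) > (1+c_1)^M\nu(A)\geq (1+c)\nu(A)$. Otherwise, some $m^* < M$ satisfies $\nu(A_{m^*}) > \tau\nu(Y)$, and since $(1+c)\nu(A)\leq (1+c)\nu(Y)/2 = \tau\nu(Y)$, monotonicity yields $\nu(A_M)\geq\nu(A_{m^*}) > (1+c)\nu(A)$. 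The main subtlety, and the only substantive difference between (1) and (2), is that in (1) the integer $k$ must be independent of the set $A$; this is exactly what the ``Furthermore'' clause of Lemma \ref{lem:annoying 1/2 upper bound} secures by allowing $k_1 = k_0$.
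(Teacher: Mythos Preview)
Your proof is correct and follows essentially the same approach as the paper: combine the (asymptotic) expansion hypothesis with Lemma~\ref{lem:annoying 1/2 upper bound} at $\beta=(1+c)/2$ to get a single-step estimate on the range up to $\tau\nu(Y)$, then iterate and run the obvious dichotomy. One minor remark: your emphasis on the ``Furthermore'' clause of Lemma~\ref{lem:annoying 1/2 upper bound} is not actually needed for part~(1), since $h_0=\fnh(\tau)$ is already a constant depending only on $c$ and the fixed expansion data $(c_0,k_0)$, so $k_1=\max\{k_0,h_0\}$ and hence $k=Mk_1$ are constants independent of $A$ regardless.
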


\begin{proof}
We only prove $(2)$, a similar proof will imply $(1)$ as well. Moreover, we only need to show the necessity since the sufficiency is trivial.
Without loss of generality, we can assume $\nu(Y)=1$. 

Fix $c\in (0,1)$ and $\alpha\in (0,\frac{1}{2}]$. By Lemma~\ref{lem:annoying 1/2 upper bound}, there exist constants $b>0$ and $h \in \NN$ such that for every finite measure subset $A \subseteq X$ with $\alpha\leq \nu(A) \leq \frac{1+c}{2}$, we have $\nu\bigparen{(B_{h} \cdot A)\cap Y} > (1+b)\nu(A)$.
Let $k\coloneqq mh$ where $m\coloneqq\lceil \log_{1+b}(1+c) \rceil$. Then either 
\[
 \nu\bigparen{(B_k \cdot A)\cap Y}> \frac{1+c}{2}\geq (1+c)\nu(A)
\]
or we deduce by induction on $m$ that 
\[
\nu\bigparen{(B_k \cdot A)\cap Y}
> (1+b)^m\nu(A)\geq (1+c)\nu(A).
\]
So the result holds.
\end{proof}

As a directly corollary of Lemma \ref{lem:equiv.defn.for.asymp.expansion.domain}(1), we recover the following result for expanding actions (which is already mentioned in Remark \ref{rmk:expansion.equiv.}(3)):
\begin{cor}\label{cor:expansion.equiv.}
A measurable action $\rho\colon \Gamma \act (X,\nu)$ of a countable discrete group $\Gamma$ on a probability measure space $(X,\nu)$ is expanding in measure \emph{if and only if} for any $c\in (0,1)$, there exists $k\in \NN$ such that for any measurable subset $A \subseteq X$ with $0<\nu(A) \leq \frac{1}{2}$, we have $\nu(\acbdry_k A)> c\nu(A)$.
\end{cor}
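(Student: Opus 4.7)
The plan is to derive this as an immediate specialization of Lemma~\ref{lem:equiv.defn.for.asymp.expansion.domain}(1). Recall that, by definition, a measurable action $\rho\colon\Gamma\curvearrowright(X,\nu)$ on a probability space is expanding in measure if and only if $X$ itself is a domain of expansion for $\rho$. So the plan is to invoke Lemma~\ref{lem:equiv.defn.for.asymp.expansion.domain}(1) with $Y = X$ and then simply rewrite the resulting inequality in terms of the action-boundary $\acbdry_k A$.

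More concretely, first I would note that since $B_k\cdot A\subseteq X$, one has $(B_k\cdot A)\cap X = B_k\cdot A$, and hence
\[
\nu\bigparen{(B_k\cdot A)\cap X} - \nu(A) = \nu(B_k\cdot A) - \nu(A) = \nu(B_k\cdot A\smallsetminus A) = \nu(\acbdry_k A),
\]
where the second equality uses that $A\subseteq B_k\cdot A$ (because $1\in B_k$). Thus the inequality $\nu\bigparen{(B_k\cdot A)\cap X} > (1+c)\nu(A)$ appearing in Lemma~\ref{lem:equiv.defn.for.asymp.expansion.domain}(1) is literally the same as $\nu(\acbdry_k A) > c\nu(A)$.

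Putting these pieces together, Lemma~\ref{lem:equiv.defn.for.asymp.expansion.domain}(1) applied with $Y=X$ gives: $X$ is a domain of expansion if and only if for every $c\in(0,1)$ there exists $k\in\NN$ such that $\nu(\acbdry_k A) > c\nu(A)$ for every measurable $A\subseteq X$ with $0<\nu(A)\leq \frac{1}{2}$. Since $X$ being a domain of expansion is the definition of $\rho$ being expanding in measure, this is exactly the claim.

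There is no real obstacle here: the only thing to watch is that the rewriting $\nu((B_k\cdot A)\cap X) - \nu(A) = \nu(\acbdry_k A)$ requires the standing convention that $1\in B_k$ (so $A\subseteq B_k\cdot A$), which is noted in Subsection~\ref{ssec:prelims:actions.on.measure.spaces}. Once that is in place, the statement is genuinely a one-line corollary.
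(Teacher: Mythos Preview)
Your proposal is correct and matches the paper's own treatment: the corollary is stated as a direct consequence of Lemma~\ref{lem:equiv.defn.for.asymp.expansion.domain}(1) applied with $Y=X$, and your rewriting $\nu((B_k\cdot A)\cap X)-\nu(A)=\nu(\acbdry_k A)$ is exactly the identification needed.
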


We conclude this subsection with the following fact showing that the union of two ``big'' domains of expansion is still a domain of expansion.
Using Lemma \ref{lem:annoying 1/2 upper bound} and  \ref{lem:equiv.defn.for.asymp.expansion.domain}, the proof is elementary and left to the reader.

\begin{lem}\label{lem:union of domains}
 Let $\rho\colon \Gamma \act (X,\nu)$ be a measurable action on a measure space $(X,\nu)$, and $Y \subseteq X$ a domain. Assume that $Y_1, Y_2\subseteq Y$ be domains of $(c,k)$\=/expansion for some $c>0$ and $k\in \NN$. If $\nu(Y_1)>\frac{3}{4}\nu(Y)$ and $\nu(Y_2)>\frac{3}{4}\nu(Y)$, then there exists $c'>0$ and $k' \in \NN$ such that $Y_1 \cup Y_2$ is a domain of both $(c,k')$\=/expansion and $(c',k)$\=/expansion.
\end{lem}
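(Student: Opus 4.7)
Let $Z \coloneqq Y_1 \cup Y_2$. Since $\nu(Y_1),\nu(Y_2) > \tfrac34\nu(Y)$ and both sit inside $Y$, an inclusion-exclusion argument gives $\nu(Y_1\cap Y_2) > \tfrac12\nu(Y)$ and $\tfrac34\nu(Y) < \nu(Z)\leq \nu(Y)$. The plan is to establish a $(c',k)$-expansion of $Z$ first (essentially by reducing to the expansion of whichever $Y_i$ contains at least half of $A$), and then bootstrap this to a $(c,k')$-expansion using Lemma~\ref{lem:equiv.defn.for.asymp.expansion.domain}(1).

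First I combine the $(c,k)$-expansion of $Y_i$ with Lemma~\ref{lem:annoying 1/2 upper bound} (using the ``Furthermore'' clause, so that the radius stays equal to $k$) applied at the scale $\beta = 2/3$. This yields a single constant $c'' > 0$ such that for any measurable $A' \subseteq Y_i$ with $0 < \nu(A') \leq \tfrac{2}{3}\nu(Y_i)$, one has $\nu((B_k\cdot A')\cap Y_i) > (1+c'')\nu(A')$ for $i=1,2$. Now given a measurable $A \subseteq Z$ with $0 < \nu(A) \leq \tfrac12 \nu(Z)$, set $A_i \coloneqq A \cap Y_i$, so that $A_1 \cup A_2 = A$ and at least one $A_i$, say $A_1$, satisfies $\nu(A_1) \geq \tfrac12\nu(A)$. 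Note that $\nu(A_1) \leq \nu(A) \leq \tfrac12 \nu(Y) \leq \tfrac23 \nu(Y_1)$, so $A_1$ lies in the regime where the $(c'',k)$-expansion estimate applies.

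The crucial geometric observation is that $A \smallsetminus A_1 \subseteq Y \smallsetminus Y_1$ and is therefore disjoint from $Y_1$. Consequently,
\[
(B_k \cdot A_1) \cap Y_1 \smallsetminus A_1 = (B_k\cdot A_1)\cap Y_1 \smallsetminus A \;\subseteq\; (B_k\cdot A) \cap Z \smallsetminus A,
\]
whence
\[
 \nu\bigparen{(B_k\cdot A)\cap Z} \geq \nu(A) + c''\nu(A_1) \geq \Bigparen{1+\tfrac{c''}{2}}\nu(A).
\]
Setting $c' \coloneqq c''/2$ proves that $Z$ is a domain of $(c',k)$-expansion.

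Having established that $Z$ is a domain of expansion, Lemma~\ref{lem:equiv.defn.for.asymp.expansion.domain}(1) provides, for every target constant in $(0,1)$, a radius $k'$ realising that expansion rate; iterating the one-step expansion (as in the proof of that lemma) extends this to any target $c > 0$. Applying this with the value $c$ from the hypothesis produces the desired $k' \in \N$ witnessing the $(c,k')$-expansion of $Z$. The main step that requires a bit of care is the geometric decomposition in the second paragraph, ensuring that the expansion gained inside $Y_1$ does not silently fall back into $A \smallsetminus A_1$; everything else is routine bookkeeping.
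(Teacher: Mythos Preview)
Your proof is correct and follows precisely the route the paper indicates: the paper does not give a proof but states that the result ``is elementary and left to the reader'' using Lemma~\ref{lem:annoying 1/2 upper bound} and Lemma~\ref{lem:equiv.defn.for.asymp.expansion.domain}, which are exactly the two ingredients you invoke. Your reduction to the larger piece $A_1$, the use of the ``Furthermore'' clause of Lemma~\ref{lem:annoying 1/2 upper bound} at $\beta=2/3$ to keep the radius equal to $k$, and the geometric observation that $(B_k\cdot A_1)\cap Y_1\smallsetminus A_1$ already lies outside $A$ are all sound; the final appeal to Lemma~\ref{lem:equiv.defn.for.asymp.expansion.domain}(1) to obtain $(c,k')$\=/expansion is immediate since any $c$ for which the hypothesis is non\=/vacuous necessarily satisfies $c<1$.
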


\subsection{The case of finitely generated groups}\label{ssec:finite generating case}
As shown in Lemma \ref{lem:equiv.defn.for.asymp.expansion.domain}(2), the parameter function $\fnc$ in the notion of asymptotic expansion in measure can always be made to be constant. The same cannot be said for the other parameter function $\fnk$. In other words, there exist asymptotically expanding actions that cannot be $S$\=/asymptotically expanding for any finite symmetric $S\subset \Gamma$:

\begin{example}
Fix a measurable action of the free group on two generators $\rho_0\colon F_2\curvearrowright (X,\nu)$ on a probability measure space $(X,\nu)$ that is expanding in measure. Let $F_\infty\coloneqq\angles{s_n\mid n\in\NN}$ be the free group on countably infinite generators and consider the action $\rho\colon F_\infty\curvearrowright \coprod_{n\geq 1}(X,2^{-n}\cdot\nu)$ defined as follows:
the first two generators $s_0,s_1$ act as $\rho_0$ on every copy of $X$; for every other $n \geq 2$, $s_n$ swaps the $n$\=/th copy of $X$ with the $(n-1)$\=/th copy and acts as the identity everywhere else. It is easy to see that $\rho$ is asymptotically expanding in measure, but for every finite $S\subseteq F_\infty$ the restriction of $\rho$ to the group generated by $S$ is not even ergodic.
\end{example}

However, if the group is finitely generated and the action is measure\=/class\=/preserving, it is possible to make the parameter function $\fnk$ constant. More precisely, it turns out that asymptotic expansion can always be recognised ``in one step'':

\begin{lem}\label{lem:finitely.generated asymp.exp.in.one.step}
 Let $\rho\colon \Gamma \act (X,\nu)$ be a measure\=/class\=/preserving action on a probability space $(X,\nu)$. If the group $\Gamma$ is finitely generated and $S=S^{-1}$ is a finite generating set, then $\rho$ is asymptotically expanding \emph{if and only if} it is $S$\=/asymptotically expanding.
\end{lem}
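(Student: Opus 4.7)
The backward direction is immediate: if $\rho$ is $(\fnc, S)$-asymptotically expanding, then since $S \subseteq B_1$ with respect to the word length induced by $S$, we have $\nu(B_1 \cdot A) \geq \nu(S \cdot A) > (1 + \fnc(\alpha))\nu(A)$, which is precisely $(\fnc,\fnk)$-asymptotic expansion with $\fnk \equiv 1$. For the converse, I would argue by contradiction, recycling the asymptotic-invariance argument from the proof of Proposition~\ref{prop:strongly ergodic iff asymptotic expanding}.

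Suppose $\rho$ is $(\fnc,\fnk)$-asymptotically expanding but fails to be $S$-asymptotically expanding. Unwinding the quantifiers in the definition of $S$-asymptotic expansion, one extracts a single scale $\alpha_0 \in (0,\tfrac{1}{2}]$ and measurable sets $A_n \subseteq X$ with $\alpha_0 \leq \nu(A_n) \leq \tfrac{1}{2}$ satisfying $\nu(S \cdot A_n) \leq (1 + 1/n)\nu(A_n)$. Consequently $\nu(s A_n \setminus A_n) \to 0$ for every $s \in S$. Since $\nu$ is a probability measure and $\rho$ is measure-class-preserving, the push-forward $s^{-1}_*\nu$ is a probability measure equivalent to $\nu$, so Lemma~\ref{lem:infinitesimal sequences are preserved} upgrades this to
\[
\nu(A_n \setminus s A_n) \;=\; \nu\bigparen{s \cdot (s^{-1} A_n \setminus A_n)} \;\xrightarrow{\,n \to \infty\,}\; 0
\]
(using $S = S^{-1}$), and hence $\nu(s A_n \symdiff A_n) \to 0$ for every $s \in S$.

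Because $S$ generates $\Gamma$, any $\gamma \in \Gamma$ admits a word decomposition $\gamma = s_1 \cdots s_\ell$ with $s_i \in S$, and the standard telescoping estimate
\[
\gamma A_n \symdiff A_n \;\subseteq\; \bigcup_{i=1}^{\ell} (s_1 \cdots s_{i-1}) \cdot (s_i A_n \symdiff A_n),
\]
combined with another appeal to Lemma~\ref{lem:infinitesimal sequences are preserved} applied to the equivalent probability measures $(s_1 \cdots s_{i-1})_*\nu$ and $\nu$, yields $\nu(\gamma A_n \symdiff A_n) \to 0$ for every $\gamma \in \Gamma$. Setting $k \coloneqq \fnk(\alpha_0)$, which is a fixed finite integer, and summing over $B_k$,
\[
\nu(B_k \cdot A_n) - \nu(A_n) \;\leq\; \sum_{\gamma \in B_k} \nu(\gamma A_n \symdiff A_n) \;\xrightarrow{\,n \to \infty\,}\; 0,
\]
whereas $(\fnc,\fnk)$-asymptotic expansion at scale $\alpha_0$ enforces $\nu(B_k \cdot A_n) - \nu(A_n) > \fnc(\alpha_0) \nu(A_n) \geq \fnc(\alpha_0) \alpha_0 > 0$ for every $n$, the desired contradiction.

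The delicate point --- and the only place where both the finiteness of $\nu$ and the measure-class-preserving hypothesis genuinely enter --- is the upgrade from one-sided smallness $\nu(sA_n \setminus A_n) \to 0$ to symmetric-difference smallness $\nu(sA_n \symdiff A_n) \to 0$, and the propagation of this estimate along the word decomposition of $\gamma$. Both steps rest on Lemma~\ref{lem:infinitesimal sequences are preserved}, which fails in the absence of either hypothesis; this is precisely why the statement is restricted to the measure-class-preserving, finitely generated setting.
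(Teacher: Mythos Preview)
Your argument is correct and constitutes a genuinely different proof from the paper's. The paper argues \emph{directly}: for fixed $\alpha$ it sets $k=\fnk(\alpha)$, uses the identity $(S^k\cdot A)\smallsetminus A=(S^{k-1}\cdot\acbdry_S A)\smallsetminus A$, and controls $\nu(S^{k-1}\cdot\acbdry_S A)$ by splitting $X$ according to where the Radon--Nikodym derivatives $\d\gamma_*\nu/\d\nu$ for $\gamma\in S^{k-1}$ are bounded by some large $D$. This yields an \emph{explicit} lower bound $\nu(\acbdry_S A)>\frac{\fnc(\alpha)-\epsilon/\alpha}{|S|^{k-1}D}\,\nu(A)$. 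Your route, by contrast, is a soft contradiction argument: you extract an almost $S$-invariant sequence $(A_n)$ at a fixed scale $\alpha_0$, use Lemma~\ref{lem:infinitesimal sequences are preserved} (rather than any pointwise Radon--Nikodym estimate) to pass from one-sided to two-sided smallness and to propagate along words in $S$, and then collide with $(\fnc,\fnk)$-expansion at scale $\alpha_0$. This is essentially the ``necessity'' half of Proposition~\ref{prop:strongly ergodic iff asymptotic expanding} run in reverse, and it is pleasantly economical. The trade-off is that your proof is non-quantitative: it gives no formula for the $S$-expansion function $\fnc'$ in terms of $\fnc$ and $\fnk$, whereas the paper's does. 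One small cosmetic point: in the backward direction you write $S\subseteq B_1$ ``with respect to the word length induced by $S$'', but the paper works with a fixed proper length function; it suffices to take any $k$ with $S\subseteq B_k$, or to invoke the remark that asymptotic expansion is independent of the length function.
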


\begin{proof}
The sufficiency is trivial, so we only focus on the necessity. Assume that $\rho$ is $(\fnc,\fnk)$\=/asymptotically expanding for appropriate functions $\fnc,\fnk$. We need to show that for every fixed $\alpha>0$, there exists $c'>0$ such that $\nu\paren{\acbdry_S A}> c'\nu(A)$ for every measurable $A\subseteq X$ with $\alpha\leq\nu(A)\leq\frac{1}{2}$.

Let $k\coloneqq\fnk(\alpha)$ be fixed, and we also fix an $\epsilon>0$ such that $0<\epsilon<\alpha\fnc(\alpha)$. Since $\nu$ is a probability measure, $\rho$ is measure\=/class\=/preserving and $S$ is finite, it follows that there is a $D=D(\epsilon)$ large enough so that $\nu(S^{k-1}\cdot Z_D^{(k)})<\epsilon$, where $Z_D^{(k)}\subseteq X$ is the measurable set of points $x\in X$ satisfying $\d \gamma_*\nu/\d\nu(x)\geq D$ for some $\gamma\in S^{k-1}$. We fix such a constant $D>0$.

Let $A\subseteq X$ be a measurable subset with $\alpha\leq\nu(A)\leq\frac{1}{2}$, by hypothesis we have $\nu\paren{S^k\cdot A}> (1+\fnc(\alpha))\nu(A)$. Note that $\paren{S^k\cdot A}\smallsetminus A=\paren{S^{k-1}\cdot \acbdry_S A}\smallsetminus A$, hence we have
\[
 \nu\bigparen{(S^k\cdot A)\smallsetminus A}
 \leq \nu\paren{S^{k-1}\cdot \acbdry_S A}
 \leq \nu \bigparen{(S^{k-1}\cdot (\acbdry_S A\smallsetminus Z_D^{(k)})} + \nu(S^{k-1}\cdot Z_D^{(k)})
<\abs{S}^{k-1}D\nu\paren{\acbdry_S A}+\epsilon.
\]
Combining these inequalities, we obtain that
\[
 \nu\paren{\acbdry_S A}>\paren{\fnc(\alpha)\nu(A)-\epsilon}\abs{S}^{1-k}D^{-1}
 \geq\frac{\fnc(\alpha)-\epsilon/\alpha}{\abs{S}^{k-1}D}\nu(A).
\]
Since $\epsilon<\alpha\fnc(\alpha)$, we finish the proof.
\end{proof}

The above result is motivated by \cite[Lemma 2.8]{structure} and will have further applications in a subsequent work \cite{dynamics2}.

\begin{rmk}\label{rmk:k-step.asymp.exp vs 1-step.asymp.exp}
Note that the ``asymptoticity'' is crucial in the above result: the analogous statement for expanding actions is false (it would only be true under the assumption that the Radon--Nikodym derivatives be bounded). Also note that Lemma~\ref{lem:finitely.generated asymp.exp.in.one.step} is trivially false for \emph{domains} of (asymptotic) expansion.
\end{rmk}

\section{Structure theorems for strongly ergodic actions}\label{sec:structure}

In this section, we introduce a structure theory for strongly ergodic actions. We divide it in three cases: actions on probability spaces, actions on general $\sigma$-finite measure spaces, and actions by finitely generated groups. 
Even if they are of the same spirit, these cases are treated separately because they differ significantly in the details.
Note that these structural results are dynamical analogues of a structure theory first developed for asymptotic expanders \cite{structure}.

\subsection{Maximal \fol sets}\label{ssec:maximal.folner}
 The proof of the structure results relies on the existence of ``maximal'' \fol sets. This is an adaptation of a technique introduced in \cite{structure} to study finite metric spaces. It is worth pointing out that the existence of such sets is less obvious in the current dynamical setting. Furthermore, we also need to prove a slightly more technical result which is necessary to deal with spaces of infinite measure.
 
 As Amine Marrakchi pointed out to us, it turns out that this sort of maximality arguments is used fairly often by von Neumann algebraists, and appear to go all the way back to von Neumann himself. In particular, the lemmas that we prove in this section are very similar to some results used in the proof of \cite[Theorem 3.5]{houdayer2017strongly}.

\begin{defn}\label{defn:Folner sets}
Let $\rho\colon \Gamma \act (X,\nu)$ be a measurable action of a countable discrete group $\Gamma$ on a measure space $(X,\nu)$, $Y\subseteq X$ a domain, $\epsilon>0$ and $k\in \NN$ be fixed. Given a measurable subset $Z\subseteq Y$, an \emph{$(\epsilon,k)$\=/\fol set in $Y$ relative to $Z$} is a measurable subset $A\subseteq Y\smallsetminus Z$ such that $\nu(A) \leq \frac{1}{2} \nu(Y)$ and $\nu((\acbdry_k A\cap Y)\smallsetminus Z) \leq \epsilon\nu(A)$. When $Z=\emptyset$, such an $A$ is also called an \emph{$(\epsilon,k)$\=/\fol set in $Y$}.
When $\nu$ is finite and the domain $Y$ is not specified, we assume that it is the whole of $X$.
\end{defn}

Let $Z\subseteq Y$ be a fixed measurable subset, $\epsilon>0$ and $k\in \NN$ be some constants. We consider the family of all $(\epsilon,k)$\=/\fol sets in $Y$ relative to $Z$:
\[
\F_{\epsilon,k}^Z\coloneqq \bigbraces{F \subseteq Y\smallsetminus Z \bigmid 0 \leq \nu(F) \leq \frac{1}{2} \nu(Y),\ \nu((\acbdry_k F\cap Y)\smallsetminus Z) \leq \epsilon\nu(F) }
\]
and its quotient $\CF_{\epsilon,k}^Z/\sim$, where $F\sim F'$ if they coincide up to null\=/sets. The inclusion induces a partial ordering on $\CF_{\epsilon,k}^Z/\sim$, where $[F]\sqsubseteq[F']$ if $F\subseteq F'$ up to null\=/sets. Note that $\CF_{\epsilon,k}^Z/\sim$ is always non\=/empty, as it contains the empty set.

\begin{lem}\label{lem:exists maximal Folner}
The partially ordered set $(\CF_{\epsilon,k}^Z/\sim,\sqsubseteq)$ has maximal elements.
\end{lem}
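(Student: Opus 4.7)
The natural strategy is to invoke Zorn's lemma, so I would need to verify that every chain in $(\CF_{\epsilon,k}^Z/\sim, \sqsubseteq)$ admits an upper bound.

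Let $\braces{[F_\alpha]}_{\alpha\in I}$ be a chain and set $s \coloneqq \sup_\alpha \nu(F_\alpha) \leq \tfrac{1}{2}\nu(Y)$. First, I would pick a sequence $\braces{F_{\alpha_n}}_{n\in\NN}$ with $\nu(F_{\alpha_n})\to s$; since the chain is totally ordered, after replacing $F_{\alpha_n}$ by the largest among $F_{\alpha_1},\ldots,F_{\alpha_n}$ I may assume $F_{\alpha_n}\subseteq F_{\alpha_{n+1}}$ up to null sets. Define $F_\infty \coloneqq \bigcup_{n\in\NN} F_{\alpha_n}$. Clearly $F_\infty\subseteq Y\smallsetminus Z$ and, by monotone convergence, $\nu(F_\infty)=s\leq\tfrac{1}{2}\nu(Y)$.

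The key step is verifying that $F_\infty$ still satisfies the \fol condition. Since $B_k$ is \emph{finite}, the operator $A\mapsto B_k\cdot A$ commutes with countable increasing unions, so
\[
 (B_k\cdot F_\infty)\cap Y\smallsetminus Z = \bigcup_{n\in\NN}\bigparen{(B_k\cdot F_{\alpha_n})\cap Y\smallsetminus Z}.
\]
Using $F_{\alpha_n}\subseteq F_\infty$, for each $n$ we have
\[
 \nu\bigparen{((B_k\cdot F_{\alpha_n})\cap Y\smallsetminus F_\infty)\smallsetminus Z}
 \leq \nu\bigparen{((B_k\cdot F_{\alpha_n})\cap Y\smallsetminus F_{\alpha_n})\smallsetminus Z}
 \leq \epsilon\,\nu(F_{\alpha_n}),
\]
and another application of monotone convergence combined with $\nu(F_{\alpha_n})\to s=\nu(F_\infty)$ yields
\[
 \nu\bigparen{(\acbdry_k F_\infty\cap Y)\smallsetminus Z} \leq \epsilon\,\nu(F_\infty),
\]
so $F_\infty\in\CF_{\epsilon,k}^Z$.

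Finally, I would check that $[F_\infty]$ dominates the chain. Fix $\alpha\in I$. If there exists $n$ with $[F_\alpha]\sqsubseteq[F_{\alpha_n}]$, then $[F_\alpha]\sqsubseteq[F_\infty]$ trivially. Otherwise, the chain property forces $F_{\alpha_n}\subseteq F_\alpha$ (up to null sets) for every $n$, hence $F_\infty\subseteq F_\alpha$; but then $\nu(F_\alpha)\geq s$, while also $\nu(F_\alpha)\leq s$ by definition of $s$, so $F_\alpha=F_\infty$ up to null sets and $[F_\alpha]=[F_\infty]$. Zorn's lemma then produces the desired maximal element. The only subtle point is the verification of the \fol condition at the limit, which works precisely because $B_k$ is finite; this is also the reason one needs to work with one fixed $k$ rather than a function $\fnk$.
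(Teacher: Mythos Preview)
Your Zorn's lemma approach is natural and the upper\=/bound verification for the chain is almost complete, but there is a subtle gap at the ``monotone convergence'' step. After your reordering you only have $F_{\alpha_n}\subseteq F_{\alpha_{n+1}}$ \emph{up to null sets}, so the sets $(B_k\cdot F_{\alpha_n})\cap Y\smallsetminus F_\infty\smallsetminus Z$ need not be increasing. The lemma is stated for arbitrary measurable actions, and---as the paper remarks immediately after this lemma---the family $\CF_{\epsilon,k}^Z$ is \emph{not} closed under null perturbations when the action fails to be measure\=/class\=/preserving: a null set can have translates of positive measure, so $B_k\cdot F_{\alpha_n}$ and $B_k\cdot F_{\alpha_{n+1}}$ need not be comparable. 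Monotone convergence therefore cannot be invoked as written. The fix is easy: replace each $F_{\alpha_n}$ by $G_n\coloneqq\bigcap_{m\geq n}F_{\alpha_m}$. Then $G_n\subseteq G_{n+1}$ honestly, $G_n$ agrees with $F_{\alpha_n}$ up to a null set, and since $G_n\subseteq F_{\alpha_n}$ set\=/theoretically one has
\[
 \nu\bigparen{(\acbdry_k G_n\cap Y)\smallsetminus Z}
 \leq \nu\bigparen{(\acbdry_k F_{\alpha_n}\cap Y)\smallsetminus Z}+\nu(F_{\alpha_n}\smallsetminus G_n)
 \leq \epsilon\,\nu(G_n),
\]
so $G_n\in\CF_{\epsilon,k}^Z$ and your argument then goes through verbatim.

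For comparison, the paper bypasses Zorn's lemma entirely. Starting from any $F_0\in\CF_{\epsilon,k}^Z$, it greedily picks $F_n\in\CF_{\epsilon,k}^Z$ with $F_{n-1}\subseteq F_n$ and $\nu(F_n)$ within $1/n$ of $\beta_{n-1}\coloneqq\sup\{\nu(F)\mid F\in\CF_{\epsilon,k}^Z,\ F_{n-1}\subseteq F\}$, obtaining an \emph{honestly} increasing sequence whose union is then shown directly to be \fol and maximal. This sidesteps the null\=/set issue from the outset and uses nothing beyond countable choice, whereas your route (once patched) trades that simplicity for the conceptual cleanliness of a standard Zorn argument.
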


\begin{proof}
 Choose any $F_0\in\CF_{\epsilon,k}^Z$ and let $\beta_0\coloneqq\sup\{\nu(F)\mid F\in\CF_{\epsilon,k}^Z \mbox{~and~}\ F_0\subseteq F\}\leq \frac{\nu(Y)}{2}$. We recursively choose subsets $F_n\in \CF_{\epsilon,k}^Z$ such that $F_{n-1}\subseteq F_n$ and $\nu(F_{n})> \beta_{n-1}-\frac{1}{n}$,
 where $\beta_{n}\coloneqq\sup\{\nu(F)\mid F\in\CF_{\epsilon,k}^Z \mbox{~and~}\ F_n\subseteq F\}$ (notice that $\beta_{n+1}\leq \beta_{n}\leq \frac {\nu(Y)}{2}$).
 
 If we have $\beta_n=\nu(F_n)$ for some $n\in\NN$, we can stop the recursive process because it follows that $[F_n]$ is maximal in $\CF_{\epsilon,k}^Z/\sim$. Otherwise, we obtain an increasing sequence of measurable subsets $F_0\subseteq F_1\subseteq F_2\subseteq\cdots$. Let $F\coloneqq\bigcup_{n\in\NN}F_n$. Since $(\acbdry_k F_n\smallsetminus F)\subseteq (\acbdry_k F_{n+1}\smallsetminus F)$ for every $n\in\NN$, we know that $\acbdry_k F=\bigcup_{n\in\NN}(\acbdry_k F_n\smallsetminus F)$ is an increasing countable union. Hence we have
 \[
  \nu((\acbdry_k F\cap Y)\smallsetminus Z)
  =\lim_{n\to\infty}\nu((\acbdry_k F_n\cap Y)\smallsetminus F\smallsetminus Z)
  \leq\lim_{n\to\infty}\nu((\acbdry_k F_n\cap Y)\smallsetminus Z)
  \leq \lim_{n\to\infty}\epsilon\nu(F_n)=\epsilon\nu(F),
 \]
which implies that $F\in\CF_{\epsilon,k}^Z$. Moreover, $[F]$ is maximal in $\CF_{\epsilon,k}^Z/\sim$ because $\nu(F_n)$ approaches $\beta_n$ as $n$ grows to infinity and hence any strictly larger set cannot be \fol.
\end{proof}

With an abuse of notation, we will refer to $F\in \CF_{\epsilon,k}^Z$ as being a \emph{maximal $(\epsilon,k)$\=/\fol set (in $Y$ relative to $Z$)} if its equivalence class $[F]$ is maximal in $\CF_{\epsilon,k}^Z/\sim$.

\begin{rmk}
Note that if the action is not measure\=/class\=/preserving, the set $\CF_{\epsilon,k}^Z$ may not be closed under measure\=/zero perturbations. More precisely, it may well be that two sets coincide up to measure\=/zero but have largely different boundaries. 
\end{rmk}

The following lemma is simple but essential.

\begin{lem}\label{lem:complement of Folner set}
Let $\Gamma\curvearrowright (X,\nu)$ be a measurable action, $Z\subseteq Y\subseteq X$ domains, $\epsilon \in (0,1)$ and $k \in \NN$.
If $F_{\epsilon,k}\subseteq Y \smallsetminus Z$ is a maximal $(\epsilon,k)$\=/\fol set in $Y$ relative to $Z$, then for every measurable subset $A\subseteq Y\smallsetminus (F_{\epsilon,k}\cup Z)$ with $0<\nu(A)\leq \frac{1}{2}\nu(Y)-\nu(F_{\epsilon,k})$ we have
\[
 \nu\bigparen{(\acbdry_k A \cap Y)\smallsetminus (F_{\epsilon,k}\cup Z)} 
 > \epsilon\nu(A).
\]
\end{lem}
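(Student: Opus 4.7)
The plan is to argue by contradiction using the maximality of $F_{\epsilon,k}$. Suppose there exists a measurable $A\subseteq Y\smallsetminus (F_{\epsilon,k}\cup Z)$ with $0<\nu(A)\leq \tfrac{1}{2}\nu(Y)-\nu(F_{\epsilon,k})$ such that
\[
\nu\bigparen{(\acbdry_k A\cap Y)\smallsetminus(F_{\epsilon,k}\cup Z)}\leq \epsilon\nu(A).
\]
I would then consider the enlarged set $F'\coloneqq F_{\epsilon,k}\cup A$, which is a disjoint union since $A$ is taken in the complement of $F_{\epsilon,k}$. It clearly satisfies $F'\subseteq Y\smallsetminus Z$ and $\nu(F')=\nu(F_{\epsilon,k})+\nu(A)\leq \tfrac{1}{2}\nu(Y)$, so the goal is to show that $F'$ again lies in $\CF_{\epsilon,k}^Z$; since $\nu(A)>0$ this would strictly enlarge $F_{\epsilon,k}$ and contradict its maximality.

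The key computation is a decomposition of the boundary of $F'$. Using $B_k\cdot F'=B_k\cdot F_{\epsilon,k}\cup B_k\cdot A$ and subtracting $F_{\epsilon,k}\cup A$, one obtains
\[
\acbdry_k F'\subseteq \acbdry_k F_{\epsilon,k}\;\cup\;\bigparen{\acbdry_k A\smallsetminus F_{\epsilon,k}}.
\]
Intersecting with $Y$ and removing $Z$ yields
\[
(\acbdry_k F'\cap Y)\smallsetminus Z\;\subseteq\;\bigparen{(\acbdry_k F_{\epsilon,k}\cap Y)\smallsetminus Z}\;\cup\;\bigparen{(\acbdry_k A\cap Y)\smallsetminus(F_{\epsilon,k}\cup Z)},
\]
and taking measures, combined with the Følner property of $F_{\epsilon,k}$ and the contradiction hypothesis, gives $\nu((\acbdry_k F'\cap Y)\smallsetminus Z)\leq \epsilon\nu(F_{\epsilon,k})+\epsilon\nu(A)=\epsilon\nu(F')$. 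Hence $F'\in\CF_{\epsilon,k}^Z$, contradicting the maximality of $[F_{\epsilon,k}]$ in $(\CF_{\epsilon,k}^Z/\sim,\sqsubseteq)$.

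The only point requiring any care is the set\=/theoretic inclusion for the boundary of $F'$: one should verify it directly from $\acbdry_k F'=B_k\cdot F'\smallsetminus F'$ without accidentally double\=/counting points that lie in $A$ but not in $F_{\epsilon,k}$ (or vice versa). Once that inclusion is in hand, the rest is a one\=/line measure estimate, so I do not expect any serious obstacle.
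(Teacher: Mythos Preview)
Your proposal is correct and is essentially the same argument as the paper's, only framed as a proof by contradiction rather than a direct proof: both hinge on the same boundary inclusion $(\acbdry_k(F_{\epsilon,k}\sqcup A)\cap Y)\smallsetminus Z\subseteq\bigparen{(\acbdry_k F_{\epsilon,k}\cap Y)\smallsetminus Z}\cup\bigparen{(\acbdry_k A\cap Y)\smallsetminus(F_{\epsilon,k}\cup Z)}$ and the F\o{}lner bound for $F_{\epsilon,k}$. The paper simply applies maximality directly to deduce the strict inequality for the union and then rearranges, whereas you assume the inequality fails and derive the same contradiction.
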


\begin{proof}
Given such an $A\subseteq Y\smallsetminus (F_{\epsilon,k}\cup Z)$, we have $\nu(F_{\epsilon,k})<\nu(A\sqcup F_{\epsilon,k})\leq \frac{1}{2}\nu(Y)$. Furthermore, we have
\[
 \paren{\acbdry_k(A\sqcup F_{\epsilon,k})\cap Y}\smallsetminus Z
 \subseteq\bigparen{(\acbdry_k A\cap Y)\smallsetminus (F_{\epsilon,k}\cup Z)}\cup\bigparen{(\acbdry_k F_{\epsilon,k}\cap Y)\smallsetminus Z}.
\]
By the maximality of $F_{\epsilon,k}$, it follows that
\[
 \epsilon\nu(A\sqcup F_{\epsilon,k})
 <\nu\bigparen{\paren{\acbdry_k(A\sqcup F_{\epsilon,k})\cap Y}\smallsetminus Z}
 \leq\nu\bigparen{(\acbdry_k A\cap Y)\smallsetminus (F_{\epsilon,k}\cup Z)}+\nu\bigparen{(\acbdry_k F_{\epsilon,k}\cap Y)\smallsetminus Z},
\]
which finishes the proof because $\nu\bigparen{(\acbdry_k (F_{\epsilon,k})\cap Y)\smallsetminus Z}\leq \epsilon\nu(F_{\epsilon,k})$ by the hypothesis.
\end{proof}

\subsection{Structure Theorem for probability spaces}

Before we state the structure theorem for action on probability spaces, let us introduce the following ``localised'' version as a main technical step:

\begin{prop}\label{prop:exhausting domains by domains}
 Let $\Gamma\curvearrowright (X,\nu)$ be a measurable action of a countable discrete group $\Gamma$ on a measure space $(X,\nu)$ and $Y\subseteq X$ a domain of asymptotic expansion. 
Let $c\in(0,1)$ be any fixed constant and $(Z_n)_{n\in\NN}$ a sequence of nested subsets of $Y$ with $\nu(Z_n)\to 0$.
 Then there exist $N_0\in \NN$, a sequence of natural numbers $(k_n)_{n>N_0}$ and an exhaustion $Y_n\nearrow Y$ by domains of $(c,k_n)$\=/expansion such that $Y_n\subseteq Y\smallsetminus Z_n$ for every $n>N_0$.
\end{prop}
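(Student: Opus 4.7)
The plan is to realise each $Y_n$ as the complement in $Y$ of both the given small set $Z_n$ and a maximal Følner set that absorbs the obstruction to expansion, then patch the resulting non-nested sets into a genuine exhaustion. Normalise $\nu(Y)=1$ throughout.

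First I would boost the asymptotic expansion of $Y$ using Lemma~\ref{lem:equiv.defn.for.asymp.expansion.domain}(2): fix any $c'\in(c,1)$ and obtain a (non-increasing, without loss of generality) function $K\colon(0,\tfrac12]\to\NN$ such that
$\nu\bigparen{(B_{K(\alpha)}\cdot A)\cap Y}>(1+c')\nu(A)$
for every $A\subseteq Y$ with $\alpha\leq\nu(A)\leq \tfrac12$. Choose a sequence $\alpha_n\searrow 0$ satisfying $\nu(Z_n)<(c'-c)\alpha_n/2$ (for instance $\alpha_n=\max(1/n,\,2\nu(Z_n)/(c'-c))$), set $k_n':=K(\alpha_n)$, and invoke Lemma~\ref{lem:exists maximal Folner} to pick a maximal $(c,k_n')$-Følner set $F_n$ in $Y$ relative to $Z_n$.

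The critical estimate is $\nu(F_n)<\alpha_n$, proved by contradiction: if $\nu(F_n)\geq\alpha_n$, the upgraded asymptotic expansion applied to $F_n$ (which has $\nu(F_n)\leq\tfrac12$ by definition of a Følner set) yields $\nu(\acbdry_{k_n'}F_n\cap Y)>c'\nu(F_n)$, while the Følner condition gives $\nu(\acbdry_{k_n'}F_n\cap Y)\leq c\nu(F_n)+\nu(Z_n)$; these combine to $\nu(F_n)<\nu(Z_n)/(c'-c)<\alpha_n/2$, a contradiction. Hence $\nu(F_n)\to 0$, and for $n\geq N_0$ large we also have $\alpha_n<\tfrac14$ and $\nu(F_n)+\nu(Z_n)<(c'-c)/8$.

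Set $Y_n':=Y\smallsetminus(F_n\cup Z_n)$ and verify that it is a domain of $(c,k_n')$-expansion. For $A\subseteq Y_n'$ with $0<\nu(A)\leq\tfrac12-\nu(F_n)$ this is immediate from Lemma~\ref{lem:complement of Folner set} with $\epsilon=c$. The remaining case $\nu(A)\in(\tfrac12-\nu(F_n),\tfrac12\nu(Y_n')]$ is where the main obstacle lies, since Lemma~\ref{lem:complement of Folner set} no longer applies; I would handle it by observing that in this range $\nu(A)>\tfrac14$, so the upgraded expansion in the ambient $Y$ (with $\alpha=\tfrac14$, using $k_n'=K(\alpha_n)\geq K(\tfrac14)$ by monotonicity) gives $\nu(B_{k_n'}A\cap Y)>(1+c')\nu(A)$, and the loss incurred by passing from $Y$ to $Y_n'$ is at most $\nu(F_n)+\nu(Z_n)<(c'-c)/4<(c'-c)\nu(A)$, leaving a net $(1+c)$-expansion.

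Finally, the $Y_n'$ need not be nested. To promote them to a bona fide exhaustion, set $Y_n:=\bigcup_{m=N_0}^n Y_m'$. Since the $Z_n$ are nested decreasing, $Y_m'\subseteq Y\smallsetminus Z_m\subseteq Y\smallsetminus Z_n$ for every $m\leq n$, so $Y_n\subseteq Y\smallsetminus Z_n$; nestedness is automatic, and $\bigcup_n Y_n=Y$ (up to null sets) because $\nu(Y\smallsetminus Y_n')\to 0$. Each $Y_m'$ has $\nu(Y_m')>\tfrac34$ for $m\geq N_0$, so I can iterate Lemma~\ref{lem:union of domains} to fuse $Y_{N_0}',\ldots,Y_n'$ into a single domain of $(c,k_n)$-expansion for some $k_n\in\NN$ (at each step aligning the two summands to the same $(c,\text{max})$-expansion by enlarging the $k$-parameter). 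This produces the required exhaustion, completing the plan.
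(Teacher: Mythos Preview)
Your proposal is correct and follows essentially the same approach as the paper: carve out a maximal F\o{}lner set relative to $Z_n$, show it is small, verify expansion on the complement via Lemma~\ref{lem:complement of Folner set} for small sets and via ambient expansion minus the removed mass for large sets, then glue using Lemma~\ref{lem:union of domains}. The only substantive difference is bookkeeping: the paper removes a maximal $(\tfrac{c}{2},k_n)$-F\o{}lner set and obtains domains of $(\tfrac{c}{2},k_n)$-expansion (recovering the target $c$ only after the union step), whereas you introduce an auxiliary $c'>c$, remove a maximal $(c,k_n')$-F\o{}lner set, and land directly on $(c,k_n')$-expansion---arguably a cleaner constant-tracking. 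Two cosmetic remarks: your illustrative choice $\alpha_n=\max(1/n,\,2\nu(Z_n)/(c'-c))$ only gives $\nu(Z_n)\leq(c'-c)\alpha_n/2$ rather than strict inequality, so take something slightly larger; and the appeal to monotonicity of $K$ in the large-set case is unnecessary, since $\alpha_n<\tfrac14<\nu(A)\leq\tfrac12$ already places $A$ in the range where the $K(\alpha_n)$-expansion applies directly.
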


\begin{proof}
Without loss of generality, we assume that $\nu(Y)=1$. 
Since $\nu(Z_n)\to 0$, there exists $N_0\in \NN$ such that for each $n>N_0$, we have
\[
\nu(Z_n) < \frac{c^2}{8(c+1)}.
\]
We also fix a sequence $(\alpha_n)_{n>N_0}$ such that
\[
 0< \alpha_n <\frac{2}{c}\nu(Z_n).
\]

By Lemma \ref{lem:equiv.defn.for.asymp.expansion.domain}(2), there exists a sequence of natural numbers $(k_n)_{n>N_0}$ such that for every measurable subset $A\subseteq Y$ with $\alpha_n \leq \nu(A) \leq \frac{1}{2}$, we have $\nu(\acbdry_{k_n} A \cap Y) > c\nu(A)$. 
By Lemma~\ref{lem:exists maximal Folner}, for every $n>N_0$ there is a maximal $(\frac{c}{2}, k_n)$\=/\fol set $F_n$ in $Y$ relative to $Z_n$. 
We claim that the sets $\bar Y_n\coloneqq Y\smallsetminus (Z_n\sqcup F_n)$ are domains of $(\frac{c}{2},k_n)$\=/expansion such that $\nu(\bar Y_n)\to 1$.

We begin by showing that for every $n> N_0$:
\begin{equation}\label{eq:F_n is small}
  \nu(F_n)<\frac{2}{c}\nu(Z_n),
\end{equation}
which implies that $\nu(\bar Y_n)\to 1$. Note that 
\begin{equation}\label{eq:boundary estimate}
 \nu(\acbdry_{k_n} F_n \cap Y)\leq \nu((\acbdry_{k_n} F_n \cap Y)\smallsetminus Z_n)+\nu(Z_n) \leq \frac{c}{2}\nu(F_n) + \nu(Z_n).
\end{equation}
By the choice of $k_n$, if we had $\nu(F_n)\geq\frac{2}{c}\nu(Z_n)$ then we would have 
$\nu(\acbdry_{k_n} F_n \cap Y) > c\nu(F_n)$. Combined with \eqref{eq:boundary estimate}, this would imply $\nu(F_n)< \frac{2}{c}\nu(Z_n)$, contradicting the assumption that $\nu(F_n)\geq\frac{2}{c}\nu(Z_n)$.

We now show that $\bar Y_n$ is a domain of $(\frac{c}{2},k_n)$\=/expansion for every $n>N_0$.
Fix any measurable $A_n\subseteq \bar Y_n$ with $0<\nu(A_n)\leq \frac{1}{2}\nu(\bar Y_n)$. If $\nu(A_n)\leq \frac{1}{2}-\nu(F_n)$, it follows from Lemma~\ref{lem:complement of Folner set} that $\nu(\acbdry_{k_n} A_n\cap \bar Y_n)>\frac{c}{2}\nu(A_n)$ and we are done. Assume now that $\nu(A_n)>\frac{1}{2}-\nu(F_n)$. By \eqref{eq:F_n is small}, $\frac{1}{2}-\nu(F_n) > \frac{1}{2} - \frac{2}{c}\nu(Z_n)$. Since $\nu(Z_n)\leq \frac{c^2}{8(c+1)}<\frac{c}{8}$, we also have $\frac{1}{2}-\frac{2}{c}\nu(Z_n) > \frac{2}{c}\nu(Z_n) \geq \alpha_n$.
Again, we deduce by the definition of $k_n$ that
\[
 \nu(\acbdry_{k_n} A_n\cap \bar Y_n)\geq \nu(\acbdry_{k_n} A_n \cap Y)-\nu(Z_n\sqcup F_n)
 > c\nu(A_n) - \nu(Z_n\sqcup F_n).
\]
Since $\nu(F_n) < \frac{2}{c}\nu(Z_n)$ and $\nu(Z_n) < \frac{c^2}{8(c+1)}$, elementary calculation shows that
\[
\nu(Z_n \sqcup F_n) =\nu(Z_n)+\nu(F_n)< \frac{c}{2}\big( \frac{1}{2}-\nu(F_n) \big) < \frac{c}{2} \nu(A_n),
\]
which implies that
\[
\nu(\acbdry_{k_n} A_n\cap \bar Y_n) > c\nu(A_n) - \frac{c}{2} \nu(A_n) = \frac{c}{2}\nu(A_n).
\]

We thus proved our claim. To have an exhaustion it remains to find an \emph{increasing} sequence of domains. Enlarging $N_0$ if necessary, we can assume that $\nu(\bar Y_n)>\frac{3}{4}$ for every $n>N_0$ and set $Y_n\coloneqq\bigcup_{p=N_0+1}^n \bar Y_p$. Finally, applying
Lemma~\ref{lem:union of domains} we can find $k'_n$ such that $Y_n$ is a domain of $(c,k'_n)$\=/expansion. 
\end{proof}

The following is the structure theorem for asymptotically expanding actions on probability spaces. When the action is measure\=/class-preserving (and thus strong ergodicity is defined), we obtain as a special case a structure result for strongly ergodic actions.

\begin{thm}\label{thm:structure theorem for probability}
Let $\rho\colon \Gamma \act (X,\nu)$ be a measurable action of a countable discrete group on a probability space $(X,\nu)$.
Then the following are equivalent:
\begin{enumerate}
  \item $\rho$ is asymptotically expanding in measure;
  \item every $Y\subseteq X$ of positive measure admits an exhaustion by domains of expansion; 
  \item $X$ admits an exhaustion by domains of expansion; 
  \item $X$ admits an exhaustion by domains of asymptotic expansion;
\end{enumerate}
If $\rho$ is measure\=/class\=/preserving, the above conditions are also equivalent to:
\begin{itemize}
 \item [(5)] $\rho$ is strongly ergodic.
\end{itemize} 
\end{thm}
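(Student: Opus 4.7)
The plan is to prove the cyclic implications $(1)\Rightarrow(2)\Rightarrow(3)\Rightarrow(4)\Rightarrow(1)$, noting that $(1)\Leftrightarrow(5)$ for measure\=/class\=/preserving actions is just Proposition~\ref{prop:strongly ergodic iff asymptotic expanding}. Two of the cyclic implications are immediate: $(2)\Rightarrow(3)$ is the specialisation $Y=X$, and $(3)\Rightarrow(4)$ holds because every domain of expansion is a domain of asymptotic expansion.

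For $(1)\Rightarrow(2)$, the key observation is that, since $\nu(X)=1$, asymptotic expansion in measure for $\rho$ is literally the statement that $X$ itself qualifies as a domain of asymptotic expansion. Lemma~\ref{lem:subsets of domains of as exp are domains} then transfers this property to every positive measure subset $Y\subseteq X$, and Proposition~\ref{prop:exhausting domains by domains} applied to $Y$ with the trivial choice $Z_n=\emptyset$ produces the required exhaustion of $Y$ by domains of expansion.

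The main content is $(4)\Rightarrow(1)$. Given an exhaustion $Y_n\nearrow X$ by domains of asymptotic expansion, I would fix $\alpha\in(0,\tfrac 12]$, choose $N\in\NN$ with $\nu(X\smallsetminus Y_N)<\alpha/4$ (which also forces $\nu(Y_N)>\tfrac 12$), and transfer the expansion of an arbitrary test set $A\subseteq X$ with $\alpha\leq\nu(A)\leq\tfrac 12$ to that of $A'\coloneqq A\cap Y_N$ inside $Y_N$. Note that $\nu(A')\geq\tfrac{3\alpha}{4}$ while $\nu(A')/\nu(Y_N)\leq\tfrac{1}{2\nu(Y_N)}<1$. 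Combining the asymptotic\=/expansion property of $Y_N$ at scale $\tfrac{3\alpha}{4}$ with Lemma~\ref{lem:annoying 1/2 upper bound} to cover the subrange $\nu(A')>\tfrac{1}{2}\nu(Y_N)$ yields constants $c_\alpha>0$ and $k_\alpha\in\NN$, depending only on $\alpha$ and the exhaustion, such that $\nu((B_{k_\alpha}\cdot A')\cap Y_N)>(1+c_\alpha)\nu(A')$. Since $A\smallsetminus Y_N$ is disjoint from $Y_N$ and contained in $B_{k_\alpha}\cdot A$, adding measures gives
\[
\nu(B_{k_\alpha}\cdot A)>\nu(A)+c_\alpha\nu(A')\geq\nu(A)+\tfrac{3c_\alpha\alpha}{4}\geq\bigl(1+\tfrac{3c_\alpha\alpha}{2}\bigr)\nu(A),
\]
which is the required asymptotic\=/expansion estimate.

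The hard part is the quantitative calibration in $(4)\Rightarrow(1)$: the index $N$ must be chosen in terms of $\alpha$ so that $Y_N$ retains most of the mass of any test set, and Lemma~\ref{lem:annoying 1/2 upper bound} is needed precisely to handle test sets whose intersection with $Y_N$ slightly exceeds $\tfrac{1}{2}\nu(Y_N)$---a subrange not covered directly by Definition~\ref{defn:domain of asymp expansion}. Apart from this, the argument reduces to routine applications of the structural lemmas of Section~\ref{sec:asymptotic.expansion in measure} together with Proposition~\ref{prop:exhausting domains by domains}.
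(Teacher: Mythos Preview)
Your proof is correct and follows essentially the same route as the paper's: the implications $(1)\Rightarrow(2)$, $(2)\Rightarrow(3)\Rightarrow(4)$, and $(1)\Leftrightarrow(5)$ are handled identically, and for $(4)\Rightarrow(1)$ both arguments pick $Y_N$ with $\nu(X\smallsetminus Y_N)$ small relative to $\alpha$, intersect the test set with $Y_N$, and invoke Lemma~\ref{lem:annoying 1/2 upper bound} to cover the overshoot past $\tfrac12\nu(Y_N)$. The only cosmetic difference is that the paper phrases $(4)\Rightarrow(1)$ as a contradiction argument against a sequence of bad sets $A_m$, whereas you give a direct quantitative estimate; your version has the mild advantage of producing explicit expansion parameters $\bigl(\tfrac{3c_\alpha\alpha}{2},k_\alpha\bigr)$.
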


\begin{proof}
Since any positive measure subset of a domain of asymptotic expansion is a domain of asymptotic expansion (Lemma~\ref{lem:subsets of domains of as exp are domains}),
$``(1)\Rightarrow(2)''$ follows from Proposition \ref{prop:exhausting domains by domains} by setting $Z_n=\emptyset$. $``(2)\Rightarrow(3)\Rightarrow(4)"$ are trivial, and $``(1)\Leftrightarrow(5)''$ for measure\=/class\=/preserving actions is the statement of Proposition~\ref{prop:strongly ergodic iff asymptotic expanding}.

$``(4)\Rightarrow(1)"$: 
Let $Y_n\nearrow X$ be an exhaustion by domains of asymptotic expansion.
If $\rho$ were not asymptotically expanding in measure, there would be an $\alpha_0\in (0,1/2)$, a sequence of measurable subsets $\{A_m\}_{m \in \N}$ in $X$ with $\alpha_0 \leq \nu(A_m) \leq 1/2$ such that $\nu(B_{m} \cdot A_m)<(1+1/m)\nu(A_m)$. On the other hand, if $n$ is large enough so that $\nu(Y_n)\geq 1-\frac{\alpha_0}{2}$, then $\frac{\alpha_0}{2}\leq\nu(A_m\cap Y_n)\leq \frac{1}{2-\alpha_0}\nu(Y_n)$ for every $m\in\NN$.

Fix such an $n$. Since $Y_n$ is a domain of asymptotic expansion, it follows from Lemma~\ref{lem:annoying 1/2 upper bound} that there is some $\epsilon>0$ and $k\in\NN$ such that
\[
 \nu((B_{k}\cdot A_m)\cap Y_n)> (1+\epsilon)\nu(A_m)
\]
for every $m$. For $m \gg 1$, this is a contradiction to $\nu(B_{m} \cdot A_m)<(1+1/m)\nu(A_m)$.
\end{proof}

As a direct corollary to Proposition \ref{prop:exhausting domains by domains} and Theorem \ref{thm:structure theorem for probability}, we obtain the following quantitative version:

\begin{cor}\label{cor:quantitative version of structure thm}
 Let $\Gamma\curvearrowright (X,\nu)$ be a measurable action of a countable discrete group $\Gamma$ on a probability space $(X,\nu)$. Then it is asymptotically expanding in measure \emph{if and only if} for every $c\in (0,1)$ there exist a sequence $(k_n)_{n\in \NN}$ and an exhaustion $Y_n\nearrow X$ by domains of $(c,k_n)$\=/expansion.
\end{cor}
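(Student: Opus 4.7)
The plan is to deduce both implications directly from the two results cited in the corollary's preamble, with essentially no extra work needed beyond verifying that the hypotheses of those results apply here.

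For the forward direction, I would apply Proposition~\ref{prop:exhausting domains by domains} to the whole space $Y = X$, with the given constant $c \in (0,1)$ and the trivial choice $Z_n \coloneqq \emptyset$ (so the required condition $\nu(Z_n) \to 0$ holds automatically). Since $\rho$ is asymptotically expanding in measure and $\nu(X) = 1$, the whole space $X$ itself qualifies as a domain of asymptotic expansion, meeting the hypothesis of Proposition~\ref{prop:exhausting domains by domains}. The proposition then produces exactly the desired sequence $(k_n)_{n \in \NN}$ and exhaustion $Y_n \nearrow X$ by domains of $(c, k_n)$-expansion.

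For the reverse direction, fix any $c \in (0,1)$; by hypothesis there exists an exhaustion $Y_n \nearrow X$ by domains of $(c, k_n)$-expansion for some sequence $(k_n)$. A fortiori, each $Y_n$ is a domain of expansion, so condition (3) of Theorem~\ref{thm:structure theorem for probability} is satisfied. Invoking the implication $(3) \Rightarrow (1)$ in that theorem, the action $\rho$ is asymptotically expanding in measure.

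I do not anticipate any real obstacle: the corollary is genuinely a repackaging of the quantitative content that Proposition~\ref{prop:exhausting domains by domains} already gives. The only two minor points to double-check are that the choice $Z_n = \emptyset$ is allowed (it is, since the subset condition $Y_n \subseteq Y \smallsetminus Z_n$ then becomes $Y_n \subseteq X$, which is automatic) and that ``asymptotically expanding in measure'' for a probability space is exactly the statement that $X$ itself is a domain of asymptotic expansion, as follows immediately from Definition~\ref{defn:asymptotic expanding in measure} and Definition~\ref{defn:domain of asymp expansion}.
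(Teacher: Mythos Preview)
Your proposal is correct and matches the paper's approach exactly: the paper states the corollary ``as a direct corollary to Proposition~\ref{prop:exhausting domains by domains} and Theorem~\ref{thm:structure theorem for probability}'', and your forward direction (applying Proposition~\ref{prop:exhausting domains by domains} with $Y=X$ and $Z_n=\emptyset$) together with your reverse direction (invoking $(3)\Rightarrow(1)$ of Theorem~\ref{thm:structure theorem for probability}) is precisely that deduction. The two minor points you flag are both handled in the paper itself: the choice $Z_n=\emptyset$ is explicitly used in the proof of $(1)\Rightarrow(2)$ in Theorem~\ref{thm:structure theorem for probability}, and the identification of ``asymptotically expanding in measure'' with ``$X$ is a domain of asymptotic expansion'' is noted immediately after Definition~\ref{defn:domain of asymp expansion}.
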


\subsection{Structure Theorem for general measure spaces}
In this subsection, we prove a structure theorem for strongly ergodic actions on general measure spaces. 
As a consequence, 
we give a more direct characterisation of strong ergodicity and clarify its relation with the notion of local spectral gap by reproving a result of Marrakchi \cite{Mar18}. 

Let us start with the following auxiliary lemma, inspired by \cite[Remark 1.3 (4)]{BIG17}:

\begin{lem}\label{lem:union of translates of domains of expansion}
 Let $\rho\colon \Gamma\curvearrowright (X,\nu)$ be a measure\=/class\=/preserving action on a measure space $(X,\nu)$, and $Y\subseteq X$ a domain of asymptotic expansion. If $H\subseteq\Gamma$ is a finite subset such that $\nu(H\cdot Y)$ is finite, then $H\cdot Y$ is also a domain of asymptotic expansion.
\end{lem}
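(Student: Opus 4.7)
The proof proceeds in two steps; throughout, the essential use of the measure\=/class\=/preservation hypothesis is via Corollary~\ref{cor:equivalent measures have uniform ratios}, applied to finite measures supported in $Y$ or its translates.

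\emph{Step 1: each translate $hY$ (for $h \in H$ with $\nu(hY) > 0$) is itself a domain of asymptotic expansion.} The action of $h$ is a measurable bijection $Y \to hY$, and by measure\=/class preservation the pull\=/back measure $\mu$ on $Y$ defined by $\mu(E) := \nu(hE)$ is equivalent to $\nu|_Y$ and finite. Lemma~\ref{lem:asymptotic expansion is invariant under equivalence} therefore guarantees that $Y$ is still a domain of asymptotic expansion with respect to $\mu$. Given $A \subseteq hY$ with the requisite measure bounds, the preimage $B := h^{-1}A \subseteq Y$ satisfies $\mu(B) = \nu(A)$ and $\mu(Y) = \nu(hY)$; applying asymptotic expansion of $(Y,\mu)$ to $B$ yields some $k, c > 0$ with $\mu((B_k B) \cap Y) > (1+c)\mu(B)$, and pushing this back by $h$ (using $h B_k h^{-1} \subseteq B_{k+2\ell(h)}$) gives the desired expansion of $A$ inside $hY$.

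\emph{Step 2: the finite union $Y' = \bigcup_{h \in H} hY$ is a domain of asymptotic expansion.} Given $A \subseteq Y'$ with $\alpha\nu(Y') \leq \nu(A) \leq \nu(Y')/2$, pigeonhole produces $h^* \in H$ with $\delta^* := \nu(A \cap h^*Y)/\nu(h^*Y) \geq \alpha/|H|$. Fix a threshold $\epsilon = \epsilon(|H|)$ to be determined. If $\delta^* \leq 1-\epsilon$, then $A \cap h^*Y$ has intermediate density in $h^*Y$, and Lemma~\ref{lem:annoying 1/2 upper bound} applied to the domain $h^*Y$ from Step~1 immediately yields the required expansion. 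Otherwise $A$ nearly fills $h^*Y$; since $\nu(Y' \setminus A) \geq \nu(Y')/2$, a second pigeonhole produces some $h^{**} \neq h^*$ with $\nu((Y'\setminus A) \cap h^{**}Y \setminus h^*Y)$ bounded below by a positive multiple of $\nu(Y')/|H|$. The element $g := h^{**}(h^*)^{-1}$, of length at most $2\max_{h} \ell(h)$, sends $A \cap h^*Y$ into $h^{**}Y$, and Corollary~\ref{cor:equivalent measures have uniform ratios} applied to the equivalent finite measures $g_*(\nu|_{h^*Y})$ and $\nu|_{h^{**}Y}$ on $h^{**}Y$ gives $\nu(g(A \cap h^*Y)) \geq \rho_-(1-\epsilon)\nu(h^{**}Y)$. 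Choosing $\epsilon$ small enough that $\rho_-(1-\epsilon) > 1 - \tfrac{1}{8|H|}$, an elementary inclusion\=/exclusion argument inside $h^{**}Y$ forces $g(A \cap h^*Y) \cap (Y'\setminus A) \cap h^{**}Y$ to have measure bounded below by a positive multiple of $\nu(Y')$, which sits inside $(B_{2K} \cdot A) \cap Y' \setminus A$ and supplies the required expansion.

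\emph{Main obstacle.} The crucial technical difficulty is the absence of uniform control on Radon--Nikodym derivatives, which forces all quantitative transport of measure through the action to factor through Corollary~\ref{cor:equivalent measures have uniform ratios}. The second case of Step~2 relies on the limiting behaviour $\rho_-(1-\epsilon) \to 1$ as $\epsilon \to 0$ for equivalent finite measures, which is a short consequence of Lemma~\ref{lem:infinitesimal sequences are preserved} applied to complements. The finiteness of $H$ is used to take uniform minima of the finitely many $\rho_-$ functions attached to the different possible pairs $(h^*, h^{**})$.
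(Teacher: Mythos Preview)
Your argument is correct, but the route is genuinely different from the paper's. The paper never proves your Step~1 as a standalone fact; instead it always works in the \emph{original} domain $Y$. Concretely, the paper fixes $k$ large enough that $H\subseteq B_k$, and observes (via a single application of Corollary~\ref{cor:equivalent measures have uniform ratios}) that for any $A\subseteq H\cdot Y$ with $\nu(A)\geq\alpha_0$, the set $(B_k\cdot A)\cap Y$ has measure at least some $\alpha>0$: this is the pull\=/back to $Y$ by $h^{-1}$ for the pigeonholed $h$. It then runs a dichotomy on the measure of $A'\coloneqq(B_k\cdot A)\cap Y$: if $\alpha\leq\nu(A')\leq\beta$ for a suitably chosen $\beta<\nu(Y)$, Lemma~\ref{lem:annoying 1/2 upper bound} applied \emph{in $Y$} gives a boundary of size $\gtrsim\alpha b$, which lands in $(H\cdot Y)\smallsetminus A$; if instead $\nu(A')>\beta$, then one more application of $H$ (forward, not backward) pushes $A'$ to fill more than $\frac{2}{3}$ of $H\cdot Y$, which already exceeds $(1+\frac{1}{3})\nu(A)$ since $\nu(A)\leq\frac{1}{2}$. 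The existence of such a $\beta$ is a direct consequence of Lemma~\ref{lem:infinitesimal sequences are preserved} applied to $\nu(Y\smallsetminus A')\to 0\Rightarrow\nu(H\cdot Y\smallsetminus H\cdot A')\to 0$.

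By contrast, you first transport the domain property to each translate $hY$ (your Step~1, via Lemma~\ref{lem:asymptotic expansion is invariant under equivalence}), and then your dichotomy is on the \emph{density} $\delta^*$ of $A$ in the pigeonholed translate $h^*Y$. Your ``large'' case is handled by jumping laterally to a second translate $h^{**}Y$ via $g=h^{**}(h^*)^{-1}$ and an inclusion--exclusion, rather than by the paper's simpler ``push $H$ forward once more'' move. Both arguments ultimately rest on the same two ingredients (Corollary~\ref{cor:equivalent measures have uniform ratios} and the limiting behaviour from Lemma~\ref{lem:infinitesimal sequences are preserved}), but the paper's version is somewhat more economical: it avoids your Step~1 entirely and its large\=/density case is a one\=/liner. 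Your version has the compensating virtue that Step~1 (translates of domains of asymptotic expansion are again such domains) is a clean standalone statement that could be recorded separately.
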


\begin{proof}
 Let $Y$ be a domain of $(\fnc,\fnk)$\=/asymptotic expansion and assume without loss of generality that $\nu(H\cdot Y)=1$. Fix any $\alpha_0\in (0,\frac{1}{2}]$. Since $\rho$ is measure\=/class\=/preserving and $H$ is finite, there is a constant $\alpha>0$ such that for every $\gamma\in H$ and $A\subseteq \gamma\cdot Y$ with $\nu(A)\geq\frac{\alpha_0}{\abs{H}}$, we have $\nu(\gamma^{-1} \cdot A)\geq\alpha$. 
 Furthermore, since $\nu(H\cdot Y)=1$ there exists a constant $\beta<\nu(Y)$ large enough so that for every $A\subseteq Y$ with $\nu(A)>\beta$ we have $\nu(H\cdot A)> \frac{2}{3}$.

Taking $k\in \NN$ large enough such that $H \subseteq B_k$, it follows by construction that for every measurable $A\subseteq H\cdot Y$ with $\alpha_0\leq \nu(A)\leq \frac{1}{2}$, we have that $(B_k\cdot A)\cap Y$ has measure at least $\alpha$. If $\nu\paren{(B_k\cdot A)\cap Y}>\beta$, then 
 \begin{equation}\label{eq:translates of domains I}
  \nu\bigparen{(B_k^2\cdot A)\cap (H\cdot Y)}
  \geq \nu\bigparen{H\cdot((B_k\cdot A)\cap Y)\cap (H\cdot Y)} 
  > \frac{2}{3}\nu(H\cdot Y)\geq \bigparen{1+\frac{1}{3}}\nu(A).
 \end{equation}
 Otherwise, set $A'\coloneqq (B_k\cdot A)\cap Y$ and note that $\alpha\leq\nu(A')\leq\beta$. It follows by Lemma~\ref{lem:annoying 1/2 upper bound} that there exist $k_1\in\NN$ and $b>0$ depending only on $\fnk,\fnc,\alpha$ and $\beta$ such that $\nu\paren{(B_{k_1}\cdot A')\cap Y}> (1+b)\nu\paren{A'} $. Since
\[
\bigparen{(B_{k_1}\cdot A')\cap Y} \smallsetminus A'
 \subseteq \bigparen{(B_{k_1}\cdot (B_{k}\cdot A))\cap Y} \smallsetminus A 
 \subseteq \bigparen{(B_{k_1+k}\cdot A)\cap (H\cdot Y)} \smallsetminus A, 
\]
we see that
 \begin{equation}\label{eq:translates of domains II}
  \nu\bigparen{(B_{k_1+k}\cdot A)\cap (H\cdot Y)}
  \geq \nu(A)+\nu\bigparen{((B_{k_1}\cdot A')\cap Y)\smallsetminus A'} 
  > \bigparen{1+2\alpha b}\nu(A).
 \end{equation}
 Combining \eqref{eq:translates of domains I} and \eqref{eq:translates of domains II} proves the lemma.
\end{proof}

The following is the structure theorem for strongly ergodic actions on general measure spaces:

\begin{thm}\label{thm:structure theorem general}
 Let $\rho\colon\Gamma\curvearrowright (X,\nu)$ be a measure\=/class\=/preserving action of a countable discrete group $\Gamma$ on a $\sigma$\=/finite measure space $(X,\nu)$. The following are equivalent:
 \begin{enumerate}[(1)]
  \item $\rho$ is strongly ergodic;
  \item $X$ admits an exhaustion by domains of expansion; 
  \item $X$ admits an exhaustion by domains of asymptotic expansion;
  \item every finite measure subset is a domain of asymptotic expansion;
  \item every finite measure subset admits an exhaustion by domains of expansion; 
  \item $\rho$ is ergodic and $X$ admits a domain of expansion;
  \item $\rho$ is ergodic and $X$ admits a domain of asymptotic expansion.
 \end{enumerate}
\end{thm}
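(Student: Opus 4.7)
The plan is to reduce everything to the probability-space case handled by Theorem~\ref{thm:structure theorem for probability}. Using $\sigma$-finiteness, fix an equivalent probability measure $\nu'\sim\nu$ on $X$. By Definition~\ref{strong ergodic on infinite space} and Proposition~\ref{prop:strongly ergodic iff asymptotic expanding}, condition~(1) is equivalent to $\rho$ being asymptotically expanding in measure on $(X,\nu')$. Throughout, Lemma~\ref{lem:asymptotic expansion is invariant under equivalence} freely transfers the ``domain of asymptotic expansion'' property between $\nu$ and $\nu'$ on any set of finite measure. Implication (3)$\Rightarrow$(1) is then immediate: a $\nu$-exhaustion of $X$ by $\nu$-domains of asymptotic expansion becomes a $\nu'$-exhaustion by $\nu'$-domains of asymptotic expansion, so Theorem~\ref{thm:structure theorem for probability} yields asymptotic expansion on $(X,\nu')$, hence~(1).

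For (1)$\Rightarrow$(4), given $Y$ with $0<\nu(Y)<\infty$, Theorem~\ref{thm:structure theorem for probability}(2) applied to $(X,\nu')$ produces a $\nu'$-exhaustion $Y_m\nearrow Y$ by $\nu'$-domains of expansion. I then repeat the contradiction argument (4)$\Rightarrow$(1) of the same theorem with $Y$ in place of $X$: Lemma~\ref{lem:annoying 1/2 upper bound} handles the case $\nu'(A\cap Y_m)>\tfrac12\nu'(Y_m)$, and Lemma~\ref{lem:equiv.defn.for.asymp.expansion.domain} is invoked to enlarge the expansion constants of the $Y_m$ as much as needed. This shows $Y$ is a $\nu'$-, hence a $\nu$-, domain of asymptotic expansion. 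Then (4)$\Rightarrow$(5) is Proposition~\ref{prop:exhausting domains by domains} with $Z_n=\emptyset$, and (2)$\Rightarrow$(3) is trivial.

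The implication (5)$\Rightarrow$(2) requires an inductive construction. I use $\sigma$-finiteness to choose a slow-growing exhaustion $Z_n\nearrow X$ with $\nu(Z_n)=n$ for $n\geq n_0$, and build $W_n\subseteq Z_n$ inductively: by (5), pick a $(c,k_n)$-domain of expansion $V_n\subseteq Z_n$ with $\nu(Z_n\smallsetminus V_n)<2^{-(n+1)}$, and set $W_n\coloneqq V_n\cup W_{n-1}$. The tail estimate $\nu(Z_n\smallsetminus W_n)<2^{-n}$ propagates inductively, and for $n$ large enough the slow growth of $Z_n$ forces $\nu(V_n),\nu(W_{n-1})>\tfrac34\nu(Z_n)$, so Lemma~\ref{lem:union of domains} makes $W_n$ a domain of expansion. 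A direct tail estimate then gives $W_n\nearrow X$ up to null.

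For the ``ergodic plus one domain'' conditions, (6)$\Rightarrow$(7) is trivial, and (1)$\Rightarrow$(6) follows from (1)$\Rightarrow$(4) combined with Proposition~\ref{prop:exhausting domains by domains} (together with the fact that strong ergodicity implies ergodicity). The crux is (7)$\Rightarrow$(1): enumerate $\Gamma=\{\gamma_i\}_{i\in\NN}$ and set $H_n\coloneqq\{\gamma_1,\ldots,\gamma_n\}$, so that ergodicity gives $H_n\cdot Y\nearrow\Gamma\cdot Y=X$ up to null. Although $\nu(H_n\cdot Y)$ can be infinite under a measure-class-preserving action, one always has $\nu'(H_n\cdot Y)\leq 1$. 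Hence Lemma~\ref{lem:asymptotic expansion is invariant under equivalence} shows $Y$ is a $\nu'$-domain of asymptotic expansion, and Lemma~\ref{lem:union of translates of domains of expansion} applied with $\nu'$ makes each $H_n\cdot Y$ a $\nu'$-domain of asymptotic expansion; applying Theorem~\ref{thm:structure theorem for probability} to the resulting $\nu'$-exhaustion delivers (1). I expect the main obstacle to be precisely this finiteness issue in (7)$\Rightarrow$(1)---it is exactly what forces the reduction to $\nu'$---and to a lesser extent the careful choice of slow-growth exhaustion required in (5)$\Rightarrow$(2) so that Lemma~\ref{lem:union of domains} applies at each inductive step.
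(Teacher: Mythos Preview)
Your cycle $(1)\Rightarrow(4)\Rightarrow(5)\Rightarrow(2)\Rightarrow(3)\Rightarrow(1)$ together with $(1)\Rightarrow(6)\Rightarrow(7)\Rightarrow(1)$ is a sensible scheme, and most of the individual steps are fine. In particular, your $(7)\Rightarrow(1)$ via passing to $\nu'$ so that $H_n\cdot Y$ automatically has finite measure is a clean variant of the paper's $(7)\Rightarrow(3)$; and your $(1)\Rightarrow(4)$, while correct, is more laborious than necessary---the paper simply observes that $(X,\nu')$ is a domain of asymptotic expansion, invokes Lemma~\ref{lem:subsets of domains of as exp are domains} to conclude the same for $Y$, and then transfers to $\nu$ via Lemma~\ref{lem:asymptotic expansion is invariant under equivalence}.

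There is, however, a genuine gap in your $(5)\Rightarrow(2)$. You assert that one can choose a ``slow-growing'' exhaustion $Z_n\nearrow X$ with $\nu(Z_n)=n$, so that eventually $\nu(W_{n-1}),\nu(V_n)>\tfrac34\nu(Z_n)$ and Lemma~\ref{lem:union of domains} applies. But a $\sigma$-finite space need not admit \emph{any} exhaustion with $\nu(Z_n)/\nu(Z_{n-1})$ bounded. Take $X=\NN$ with $\nu(\{k\})=2^{2^k}$: for any nested finite sets $Z_n\nearrow\NN$, at each step where $\max Z_n>\max Z_{n-1}$ one has $\nu(Z_n)/\nu(Z_{n-1})\geq 2^{2^{\max Z_n}}/\bigl((n-1)\,2^{2^{\max Z_n-1}}\bigr)\to\infty$, so the $\tfrac34$-hypothesis of Lemma~\ref{lem:union of domains} is never met. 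Thus your inductive gluing cannot be carried out in general, and since this is the only link from $\{(4),(5)\}$ back to $\{(2),(3)\}$ in your scheme, the equivalence is not closed.

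The paper avoids this obstruction by not attempting $(5)\Rightarrow(2)$ at all. Instead it proves $(1)\Rightarrow(2)$ directly with a Radon--Nikodym trick: set $X_n\coloneqq\{x:\tfrac1n\le \tfrac{d\nu}{d\nu'}(x)\le n\}$ and $Z_n\coloneqq X\smallsetminus X_n$, so that $\nu'(Z_n)\to 0$; apply Proposition~\ref{prop:exhausting domains by domains} on $(X,\nu')$ to get $\nu'$-domains of expansion $Y_n\subseteq X_n$ exhausting $X$; and then use the two-sided bound on $d\nu/d\nu'$ over $Y_n$ (together with Lemma~\ref{lem:annoying 1/2 upper bound}) to see that each $Y_n$ is also a $\nu$-domain of expansion. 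The loop is then closed via the elementary chain $(5)\Rightarrow(6)\Rightarrow(7)\Rightarrow(3)$, none of which needs any growth control on an exhaustion.
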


\begin{proof} 
 $``(1)\Rightarrow(2)"$: Let $\nu'$ be a fixed probability measure on $X$ equivalent to $\nu$, then $\rho\colon\Gamma\curvearrowright (X,\nu')$ is asymptotically expanding in measure by Proposition~\ref{prop:strongly ergodic iff asymptotic expanding}.
 For any $n\in\NN$, let $X_n\subseteq X$ be the set of points where the Radon--Nikodym derivative $\frac{\d \nu}{\d \nu'}$ is bounded between $\frac{1}{n}$ and $n$:
 \[
  X_n\coloneqq
  \Bigbraces{x\in X\Bigmid\frac{1}{n}\leq\frac{\d \nu}{\d \nu'}(x)\leq n}.
 \]
 Then $X_n\nearrow X$ is an exhaustion of $X$ by subsets of finite measure (with respect to both $\nu$ and $\nu'$). Since $\nu'$ is finite, the sets $Z_n\coloneqq X\smallsetminus X_n$ satisfy $\nu'(Z_n)\to 0$.  
 By Proposition~\ref{prop:exhausting domains by domains}, there exist an $N_0\in \NN$ and an exhaustion $Y_n\nearrow X$ with $n>N_0$ by domains of expansion for $\rho\colon\Gamma\curvearrowright (X,\nu')$ such that $Y_n\cap Z_n=\emptyset$ for all $n>N_0$. We claim that the sets $Y_n$ are domains of expansion for $\rho\colon\Gamma\curvearrowright (X,\nu)$ as well.
 
 Fix $n>N_0$ and let $A\subseteq Y_n$ be a measurable subset with $0<\nu(A)\leq\frac{1}{2}\nu(Y_n)$. Since $Y_n \subseteq X_n$, it follows that 
\[
\nu'(Y_n\smallsetminus A) \geq \frac{1}{n}\nu(Y_n \smallsetminus A) \geq \frac{1}{2n} \nu(Y_n) \geq \frac{1}{2n^2}\nu'(Y_n),
\]
which implies that $\nu'(A)\leq (1-\frac{1}{2n^2})\nu'(Y_n)$. Applying Lemma~\ref{lem:annoying 1/2 upper bound}, there exist constants $b>0$ and $k\in \NN$ independent of $A$ such that $\nu'(\acbdry_k A\cap Y_n)> b\nu'(A)$. Since the Radon--Nikodym derivative is uniformly bounded on $Y_n$, we deduce that
 \[
  \nu(\acbdry_k A\cap Y_n)> \frac{b}{n^2}\nu(A).
 \]
Hence $Y_n$ is a domain of expansion for $\rho\colon\Gamma\curvearrowright (X,\nu)$ as well.
 
 $``(2)\Rightarrow (3)"$ is obvious.
 
 $``(3)\Rightarrow (1)"$: Let $\nu'$ be a probability measure on $X$ equivalent to $\nu$, and $Y_n\nearrow X$ be an exhaustion by domains of asymptotic expansion. Then $Y_n\nearrow X$ is an exhaustion with respect to $\nu'$ as well. Furthermore, we deduce by Lemma~\ref{lem:asymptotic expansion is invariant under equivalence} that $Y_n\nearrow X$ is actually an exhaustion by domains of asymptotic expansion for $\nu'$. It then follows from Theorem~\ref{thm:structure theorem for probability} that $\rho\colon\Gamma\curvearrowright(X,\nu')$ is asymptotically expanding in measure, hence is strongly ergodic by Proposition~\ref{prop:strongly ergodic iff asymptotic expanding}. So condition (1) holds.

 $``(1)\Rightarrow (4)"$: Let $Y\subseteq X$ be any finite measure subset, and choose a probability measure $\nu'$ that is equivalent to $\nu$. By Proposition~\ref{prop:strongly ergodic iff asymptotic expanding}, $\Gamma\curvearrowright (X,\nu')$ is asymptotically expanding in measure and hence $(Y,\nu')$ is a domain of asymptotic expansion by Lemma~\ref{lem:subsets of domains of as exp are domains}. It follows from Lemma~\ref{lem:asymptotic expansion is invariant under equivalence} that $(Y,\nu)$ is a domain of asymptotic expansion as well.
 
 $``(4)\Rightarrow(5)"$ follows from Proposition~\ref{prop:exhausting domains by domains}.

 $``(5)\Rightarrow(6)"$: It suffices to show that the action $\rho$ is ergodic. If not, then there exists a non\=/trivial decomposition of $X$ into two $\Gamma$\=/invariant measurable subsets $X_1$ and $X_2$. Choose a finite measure subset $Y\subseteq X$ with $\nu(Y\cap X_1)>0$ and $\nu(Y\cap X_2)>0$. Condition (5) implies that $Y$ admits an exhaustion by domains of expansion. Hence there exists a domain $Y' \subseteq Y$ of expansion such that $\nu(Y'\cap X_1)>0$ and $\nu(Y'\cap X_2)>0$, and without loss of generality we can assume that $\nu(Y' \cap X_1) \leq \frac{1}{2}\nu(Y')$. However, note that for any subset $K \subseteq \Gamma$, we have 
 $\big(K \cdot (Y' \cap X_1) \smallsetminus (Y' \cap X_1) \big) \cap Y' = \emptyset$ since $X_1$ is $\Gamma$-invariant. Hence $Y'$ cannot be a domain of expansion, which is a contradiction.
 
 $``(6)\Rightarrow (7)"$ is obvious.
 
 $``(7)\Rightarrow (3)"$: As $n$ grows, the sets $B_n\cdot Y$ are an exhaustion of $(X,\nu)$ because $\rho$ is ergodic. Since the action is measure\=/class\=/preserving, we can find an exhaustion $Y'_n\nearrow Y$ such that $\nu(B_n\cdot Y'_n)$ is finite for every $n$ and $(B_n\cdot Y'_n)\nearrow (X,\nu)$. By Lemma~\ref{lem:subsets of domains of as exp are domains}, each $Y'_n$ is a domain of asymptotic expansion, and hence $B_n\cdot Y'_n$ is a domain of asymptotic expansion by Lemma~\ref{lem:union of translates of domains of expansion}.
\end{proof}

Note that the proof for $``(1)\Rightarrow (2)"$ of Theorem~\ref{thm:structure theorem general} actually shows that strong ergodicity implies that every measurable subset admits an exhaustion by domains of expansion (even if it has infinite measure). This can also be deduced from (5) with a simple diagonal argument.

Also note that (just as in Corollary~\ref{cor:quantitative version of structure thm}) Lemma~\ref{lem:equiv.defn.for.asymp.expansion.domain} implies that all the domains in Theorem~\ref{thm:structure theorem general} can be assumed to be of $(c,\fnk)$\=/(asymptotic) expansion for some fixed constant $c\in(0,1)$. However, the parameter function $\fnk$ will generally depend on the specific domain and the choice of $c$.

Finally, we record that the implication $``(6)\Rightarrow (1)"$ of Theorem~\ref{thm:structure theorem general} is a direct generalisation of the implication $``(4)\Rightarrow (5)"$ of \cite[Theorem 7.6]{BIG17}, while $``(1)\Leftrightarrow (6)"$ is a generalisation of \cite[Theorem A]{Mar18}. More precisely, recall that $Y\subseteq X$ is a domain of expansion for a measure\=/preserving action \emph{if and only if} the action has local spectral gap with respect to $Y$ (\cite[Lemma 5.2]{grabowski_measurable_2016}). Hence, Theorem~\ref{thm:structure theorem general} implies the following:

\begin{cor}[{\cite[Theorem A]{Mar18}}]\label{cor:strongly ergodic iff loc spec gap}
 A measure\=/preserving ergodic action $\rho\colon\Gamma\curvearrowright (X,\nu)$ of a countable discrete group $\Gamma$ on a $\sigma$\=/finite measure space $(X,\nu)$ is strongly ergodic \emph{if and only if} it has local spectral gap with respect to a domain $Y\subseteq X$.
\end{cor}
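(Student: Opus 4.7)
The plan is to deduce the corollary almost immediately from Theorem~\ref{thm:structure theorem general} together with the cited fact that, for measure-preserving actions, local spectral gap with respect to a domain $Y$ coincides with $Y$ being a domain of expansion (\cite[Lemma 5.2]{grabowski_measurable_2016}). So the real work has already been done; the proof will just be a translation of the dictionary.

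Concretely, I would first invoke the implication $(1)\Rightarrow(6)$ of Theorem~\ref{thm:structure theorem general}: if $\rho$ is strongly ergodic, then $\rho$ is ergodic and $X$ admits a domain of expansion $Y\subseteq X$. Applying \cite[Lemma 5.2]{grabowski_measurable_2016} to this $Y$ gives that $\rho$ has local spectral gap with respect to $Y$. This establishes one direction without further work, using only that $\rho$ is measure-preserving (needed to apply the Grabowski--M\'athé--Pikurko equivalence).

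For the converse, suppose $\rho$ is ergodic (by hypothesis) and has local spectral gap with respect to some domain $Y\subseteq X$ with $0<\nu(Y)<\infty$. Again by \cite[Lemma 5.2]{grabowski_measurable_2016}, $Y$ is a domain of expansion. Now condition $(6)$ of Theorem~\ref{thm:structure theorem general} is satisfied, so strong ergodicity follows from $(6)\Rightarrow(1)$.

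There is really no obstacle here beyond correctly citing the two inputs; the whole content is packaged in Theorem~\ref{thm:structure theorem general}. The only small thing to be careful about is to make sure the ergodicity hypothesis in the corollary matches the ergodicity clause appearing in condition $(6)$ of the theorem, and to note explicitly that the passage from ``domain of expansion'' to ``local spectral gap'' requires the action to be measure-preserving (which is assumed). No computations or further lemmas are needed.
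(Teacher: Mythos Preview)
Your proposal is correct and matches the paper's own argument essentially verbatim: the paper derives the corollary directly from the equivalence $(1)\Leftrightarrow(6)$ of Theorem~\ref{thm:structure theorem general} combined with \cite[Lemma 5.2]{grabowski_measurable_2016}, exactly as you do. Your observation that ergodicity is part of the hypothesis (matching condition~$(6)$) and that the measure-preserving assumption is what allows the Grabowski--M\'ath\'e--Pikurko equivalence to be invoked is precisely the care required.
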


\subsection{Structure Theorem for actions by finitely generated groups}\label{ssec:structure for finite generated}
As we showed in Section \ref{ssec:finite generating case}, when groups are finitely generated the notion of asymptotic expansion in measure can be recognised ``in one step'' (Lemma \ref{lem:finitely.generated asymp.exp.in.one.step}). This observation leads to another version of the structure theorem for probability spaces. Intuitively speaking, the objective of this subsection is to gain control on the parameter $\fnk$ at the cost of the control we have on $\fnc$.

The key technical result is the following analogue of Proposition \ref{prop:exhausting domains by domains}: 

\begin{prop}\label{prop:structure thm finitely generated case}
Let $\Gamma\curvearrowright (X,\nu)$ be a measurable action of a countable discrete group $\Gamma$ on a measure space $(X,\nu)$ and $Y\subseteq X$ a domain of $(\fnc,S)$\=/asymptotic expansion for some finite symmetric subset $S\subseteq \Gamma$\footnote{Note that $S$ need not generate the group $\Gamma$ and we generally do not require $\Gamma$ to be finitely generated.}. Let $(Z_n)_{n\in\NN}$ be a sequence of nested subsets of $Y$ with $\nu(Z_n)\to 0$.
 Then there exist $N_0\in \NN$, a sequence $(c_n)_{n>N_0}$ and an exhaustion $Y_n\nearrow Y$ by domains of $(c_n,S)$\=/expansion such that $Y_n\subseteq Y\smallsetminus Z_n$ for every $n>N_0$.
\end{prop}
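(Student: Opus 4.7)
The strategy will follow the template of the proof of Proposition~\ref{prop:exhausting domains by domains}, replacing the balls $B_{k_n}$ by the fixed symmetric set $S$ throughout. Unlike there, however, the flexibility provided by Lemma~\ref{lem:equiv.defn.for.asymp.expansion.domain} to enlarge $k$ while keeping $c$ fixed is unavailable, so we must instead let the expansion constants $c_n$ vary with $n$. Normalise $\nu(Y)=1$ and, after replacing $\fnc$ by $\min(\fnc,1)$ (which remains a valid asymptotic-expansion function), assume $\fnc$ is non-decreasing and bounded above by $1$. The crucial parameter choice will be a sequence $\alpha_n\searrow 0$ satisfying $\alpha_n\fnc(\alpha_n)\geq 2\nu(Z_n)$: since the function $g(\alpha)\coloneqq \alpha\fnc(\alpha)$ is non-decreasing and bounded above by $\alpha$, it tends to $0$ as $\alpha\to 0$, so the pseudo-inverse $g^{-1}(2\nu(Z_n))$ is well-defined and tends to $0$ as $\nu(Z_n)\to 0$. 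Setting $c_n\coloneqq \fnc(\alpha_n)$, by Lemma~\ref{lem:exists maximal Folner} one picks a maximal $(c_n/2,S)$-F\o{}lner set $F_n$ in $Y$ relative to $Z_n$.

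A maximality-contradiction argument identical to the one in the proof of Proposition~\ref{prop:exhausting domains by domains} shows that either $\nu(F_n)<\alpha_n$ or $\nu(F_n)<(2/c_n)\nu(Z_n)$; our choice of $\alpha_n$ was precisely designed so that the second bound is dominated by the first, whence $\nu(F_n)<\alpha_n\to 0$. Hence $\bar Y_n\coloneqq Y\smallsetminus(Z_n\sqcup F_n)$ has $\nu(\bar Y_n)\to 1$. I then claim that $\bar Y_n$ is a domain of $(c_n',S)$-expansion for a suitable $c_n'>0$. For $A\subseteq \bar Y_n$ with $0<\nu(A)\leq 1/2-\nu(F_n)$, Lemma~\ref{lem:complement of Folner set} directly yields $(c_n/2,S)$-expansion. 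For $A$ with $\nu(A)>1/2-\nu(F_n)>1/4$ (valid once $\nu(F_n)<1/4$), $(\fnc,S)$-asymptotic expansion applied at the \emph{fixed} scale $1/4$ gives $\nu((S\cdot A)\cap Y)>(1+\fnc(1/4))\nu(A)$, and the vanishing error $\nu(Z_n\sqcup F_n)$ can be absorbed to produce a positive expansion rate bounded below by $\fnc(1/4)/2$ for all sufficiently large $n$.

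Finally, to assemble an increasing exhaustion I set $Y_n\coloneqq \bigcup_{p=N_0+1}^n \bar Y_p$; this is nested by construction and exhausts $Y$ up to a null set because $\nu(Y\smallsetminus \bar Y_p)=\nu(Z_p\sqcup F_p)\to 0$. The nestedness of the $Z_p$'s ensures that $\bar Y_p\subseteq Y\smallsetminus Z_n$ for every $p\leq n$, so $Y_n\subseteq Y\smallsetminus Z_n$ as required. By induction on $n$, combining two $\bar Y_p$ at a time using an analogue of Lemma~\ref{lem:union of domains} adapted to $(c,S)$-expansion, each $Y_n$ is a domain of $(c_n'',S)$-expansion for some $c_n''>0$. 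The main obstacle I anticipate is this $S$-version of Lemma~\ref{lem:union of domains}: the ``$B_k$ can be enlarged to $B_{k'}$'' flexibility used implicitly in the original is absent here. I expect the adaptation to be a direct case analysis on whether $\nu(A\cap Y_i)$ is below or above $(1/2)\nu(Y_i)$, invoking the $S$-version of Lemma~\ref{lem:annoying 1/2 upper bound} (which is immediate from the ``furthermore'' clause of Lemma~\ref{lem:annoying 1/2 upper bound} itself) in the latter case.
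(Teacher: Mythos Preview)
Your proof is correct and follows essentially the same strategy as the paper's: choose $\alpha_n\to 0$ with $\alpha_n\fnc(\alpha_n)\gtrsim\nu(Z_n)$, remove a maximal $(\fnc(\alpha_n)/2,S)$-F\o{}lner set relative to $Z_n$, and verify $S$-expansion on the complement via Lemma~\ref{lem:complement of Folner set} plus a large-scale case. The only cosmetic differences are that the paper parametrises via $\alpha_n=1/m(n)$ and handles the large-$\nu(A)$ case at the moving scale $\tfrac12-\tfrac{1}{m(n)}$ rather than your fixed scale $\tfrac14$, and that the paper simply cites Lemma~\ref{lem:union of domains} (implicitly its $(c',k)$-conclusion with $k$ fixed, i.e.\ the $S$-version) for the final union step where you spell out the needed adaptation.
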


\begin{proof}
Without loss of generality, we assume that $\nu(Y)=1$. 
For $m\geq 2$, let $c_m\coloneqq \fnc(\frac{1}{m})$ and let $N_0$ be large enough so that $\nu(Z_n)\leq \frac{c_2}{4}$ for every $n>N_0$. Since $\nu(Z_n)\to 0$, it is possible to choose for each $n>N_0$ a natural number $m(n)\geq 2$ in such a way that the sequence $m(n)$ is non\=/decreasing, $m(n)\to\infty$, and 
\[
 \nu(Z_n)\leq \frac{c_{m(n)}}{2m(n)}
\]
for every $n>N_0$.

For every $n\geq N_0$, let $F_n$ be a maximal $(\frac{c_{m(n)}}{2},S)$\=/\fol set in $Y$ relative to $Z_n$. 
We claim that the sets $\bar Y_n\coloneqq Y\smallsetminus (Z_n\sqcup F_n)$ are domains of $S$\=/expansion such that $\nu(\bar Y_n)\to 1$. Note that 
\begin{equation}\label{boundary estimate 2}
 \nu(\acbdry_S F_n \cap Y)\leq \nu((\acbdry_S F_n \cap Y)\smallsetminus Z_n)+\nu(Z_n)\leq \bigparen{\frac{\nu(F_n)}{2}+\frac{1}{2m(n)} }c_{m(n)}.
\end{equation}
If $\nu(F_n)\geq \frac{1}{m(n)}$, then by the hypothesis we have $\nu(\acbdry_S F_n \cap Y) > c_{m(n)} \nu(F_n)$. Together with \eqref{boundary estimate 2} this implies that $\nu(F_n)< \frac{1}{m(n)}$, which is a contradiction. Hence we must always have $\nu(F_n)< \frac{1}{m(n)}$. In particular, it follows that $\nu(\bar Y_n)\to1$.

To show that $\bar Y_n$ is a domain of $S$\=/expansion, we fix any $A_n\subseteq \bar Y_n$ with $0< \nu(A_n)\leq \frac{1}{2}\nu(\bar Y_n)$. If $\nu(A_n)\leq \frac{1}{2}-\nu(F_n)$, it follows from Lemma~\ref{lem:complement of Folner set} that $\nu(\acbdry_S A_n\cap \bar Y_n)>\frac{c_{m(n)}}{2}\nu(A_n)$ and we are done. If $\frac{1}{2}-\nu(F_n)<\nu(A_n)$, then $\frac{1}{2}-\frac{1}{m(n)}<\nu(A_n)\leq \frac{1}{2}$. Thus we have:
\[
 \nu(\acbdry_S A_n\cap \bar Y_n)\geq \nu(\acbdry_S A_n \cap Y)-\nu(Z_n\sqcup F_n)
 > \fnc\bigparen{\frac{1}{2}-\frac{1}{m(n)}}-\bigbrack{\frac{1}{m(n)}+\frac{c_{m(n)}}{2m(n)}}.
\]
As $n$ goes to infinity $\fnc\bigparen{\frac{1}{2}-\frac{1}{m(n)}}$ stays bounded away from zero, while $\frac{1}{m(n)}+\frac{c_{m(n)}}{2m(n)}\to 0$. Enlarging $N_0$ if necessary, we can find $c'>0$ such that for any $n> N_0$ we have $\nu(\acbdry_S A_n\cap \bar Y_n)> c'\geq 2c'\nu(A_n)$. Combining the above two cases, we obtain that $\bar Y_n$ is a domain of $S$-expansion for every $n > N_0$. Finally, we can obtain an exhaustion by applying Lemma \ref{lem:union of domains}. This completes the proof.
\end{proof}

It is worthwhile pointing out that in some instances it is very useful to upgrade $(\fnc,\fnk)$\=/asymptotic expansion to $(\fnc',S)$\=/asymptotic expansion. Proposition~\ref{prop:structure thm finitely generated case} is an important tool in our forthcoming work \cite{dynamics2}. 

Combining Lemma \ref{lem:finitely.generated asymp.exp.in.one.step}, Theorem \ref{thm:structure theorem for probability} and Proposition \ref{prop:structure thm finitely generated case}, we obtain a second quantitative version of the structure result for finitely generated group actions (compare with Corollary \ref{cor:quantitative version of structure thm}):

\begin{cor}\label{cor:quantitative version of structure thm for f.g. groups}
 Let $\Gamma=\angles{S}$ be a group generated by a finite symmetric set $S$ and let $\rho\colon\Gamma\curvearrowright (X,\nu)$ be a measure\=/class\=/preserving action on a probability space $(X,\nu)$. Then $\rho$ is asymptotically expanding in measure \emph{if and only if}  there exists an exhaustion $Y_n\nearrow X$ by domains of $S$\=/expansion.
\end{cor}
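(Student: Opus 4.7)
The plan is to establish both implications by stringing together the three results mentioned in the statement, with essentially no new work beyond bookkeeping.

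For the "if" direction, suppose $Y_n\nearrow X$ is an exhaustion by domains of $S$\=/expansion. Choose $k\in\NN$ with $S\subseteq B_k$; then every domain of $(c,S)$\=/expansion is automatically a domain of $(c,k)$\=/expansion in the sense of Definition~\ref{defn:domain of expansion}. Hence $Y_n\nearrow X$ is an exhaustion by domains of expansion, and the implication $(3)\Rightarrow(1)$ in Theorem~\ref{thm:structure theorem for probability} shows that $\rho$ is asymptotically expanding in measure.

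For the "only if" direction, suppose $\rho$ is asymptotically expanding in measure. Since $\Gamma=\angles{S}$ is finitely generated by the finite symmetric set $S$ and $\rho$ is a measure\=/class\=/preserving action on a probability space, Lemma~\ref{lem:finitely.generated asymp.exp.in.one.step} applies and upgrades $\rho$ to $(\fnc,S)$\=/asymptotic expansion for some function $\fnc\colon(0,\tfrac12]\to\RR_{>0}$. Equivalently, taking $Y\coloneqq X$, the whole space is a domain of $(\fnc,S)$\=/asymptotic expansion.

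At this point we apply Proposition~\ref{prop:structure thm finitely generated case} with $Y=X$ and with the trivial sequence $Z_n\coloneqq\emptyset$ (which satisfies $\nu(Z_n)\to 0$ vacuously). The conclusion of the proposition gives an $N_0\in\NN$, constants $c_n>0$ for $n>N_0$, and an exhaustion $Y_n\nearrow X$ by domains of $(c_n,S)$\=/expansion, as required.

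There is no genuine obstacle: all the work has already been carried out in Lemma~\ref{lem:finitely.generated asymp.exp.in.one.step} (where the passage from a varying $\fnk$ to the single generating set $S$ was handled using the measure\=/class\=/preservation) and in Proposition~\ref{prop:structure thm finitely generated case} (where the maximal \fol set construction was adapted so as to preserve the generating set $S$ throughout). The only care required in the assembly is to notice that the sequence $Z_n=\emptyset$ is a legitimate input to Proposition~\ref{prop:structure thm finitely generated case}, so that the restriction $Y_n\subseteq Y\smallsetminus Z_n$ becomes vacuous and we truly obtain an exhaustion of all of $X$.
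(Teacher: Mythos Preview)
Your proof is correct and follows essentially the same approach as the paper, which simply states that the result follows by combining Lemma~\ref{lem:finitely.generated asymp.exp.in.one.step}, Theorem~\ref{thm:structure theorem for probability} and Proposition~\ref{prop:structure thm finitely generated case}. Your write-up is a faithful expansion of this: the ``if'' direction uses $(3)\Rightarrow(1)$ of Theorem~\ref{thm:structure theorem for probability}, and the ``only if'' direction applies Lemma~\ref{lem:finitely.generated asymp.exp.in.one.step} followed by Proposition~\ref{prop:structure thm finitely generated case} with the trivial sequence $Z_n=\emptyset$.
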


\section{Strong ergodicity in homogeneous dynamical systems}\label{sec:homogeneity}

The aim of this section is to show that for ``sufficiently homogeneous'' actions, strong ergodicity is equivalent to expansion in measure. More precisely, recall that an \emph{automorphism} of a group action $\rho\colon\Gamma\curvearrowright (X,\nu)$ is a measure-preserving transformation $\lambda\colon (X,\nu)\to (X,\nu)$ that commutes with $\rho$. For our purposes, the relevant notion of ``sufficiently homogeneous'' is that the group of automorphisms of $\rho$ acts ergodically on $(X,\nu)$. In particular, the results of this section will apply to actions that commute with some measure\=/preserving ergodic action.

The heuristic reason to consider the above assumption
is provided by the structure theorem. More precisely if the action $\rho\colon\Gamma\curvearrowright (X,\nu)$ is strongly ergodic but not expanding in measure, it follows from Theorem~\ref{thm:structure theorem for probability} that there exists an exhaustion by domains of expansion $Y_n\nearrow X$. Hence any sequence of measurable subsets $A_k$ with $\nu(A_k)\leq \frac{1}{2}$ and $\nu(A_k \symdiff \gamma A_k)\to 0$ cannot be contained into any of the $Y_n$. However, using the ergodic action $(X,\nu)\curvearrowleft\Lambda$ we can ``almost'' transfer $A_k$ back into $Y_n$. Since the $\Lambda$-action is measure-preserving and commutes with $\rho$, these new sets will satisfy the same properties and we should hence obtain a contradiction. 
We should remark that, to a varying extent, this phenomenon was already observed by other authors (\emph{e.g.}, \cite[Theorem 4]{abert2012dynamical} and \cite[Lemma 10]{chifan2010ergodic}). In particular, the main result of this section can be seen as an effective version of \cite[Lemma 10]{chifan2010ergodic}:

\begin{thm}\label{thm:non.homogeneous}
\label{thm:homogeneous strong ergodic}
 Let $\rho\colon\Gamma\curvearrowright (X,\nu)$ be a measurable action of a countable discrete group $\Gamma$ on a probability space $(X,\nu)$, and $(X,\nu)\curvearrowleft \Lambda$ an ergodic measure\=/preserving action of a group $\Lambda$ which commutes with $\rho$. If there exist $0<\alpha_0\leq \frac 14$, $c>0$ and a finite symmetric subset $S\subseteq \Gamma$ such that 
 \[
  \nu(\acbdry_SA)> c\nu(A)
 \]
 for every $A\subseteq X$ with $\alpha_0\leq\nu(A)\leq\frac 12$, then $\Gamma\curvearrowright (X,\nu)$ is $S$\=/expanding in measure.
\end{thm}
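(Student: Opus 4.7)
The plan is to bootstrap the expansion hypothesis, which applies only to sets of measure in $[\alpha_0,\tfrac{1}{2}]$, to all positive-measure sets $\leq \tfrac{1}{2}$. Given $A$ with $\nu(A)=\beta<\alpha_0$, I will use $\Lambda$\=/translates of $A$ to inflate it to a set $\tilde A=\bigcup_{i=1}^k\lambda_iA$ whose measure lands in $[\alpha_0,\tfrac{1}{2}]$, to which the hypothesis can be applied. The algebraic fact driving this is that since $\Lambda$ commutes with $\rho$ and preserves $\nu$, each translate satisfies $\acbdry_S(\lambda A)=\lambda\acbdry_S A$ with $\nu(\lambda\acbdry_SA)=\nu(\acbdry_SA)$; hence the union bound $\nu(\acbdry_S\tilde A)\leq k\,\nu(\acbdry_SA)$ transfers the boundary estimate for $\tilde A$ back to $A$.

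\textbf{Construction of $\tilde A$.} I would build $\tilde A$ iteratively: set $B_0=\emptyset$ and recursively $B_{k+1}=B_k\cup\lambda_{k+1}A$, choosing $\lambda_{k+1}$ so that $\lambda_{k+1}A$ has small overlap with $B_k$. Existence of such $\lambda_{k+1}$ follows from the mean ergodic theorem in its general form, valid for any group: ergodicity of $\Lambda$ means $f_k:=\infun_{B_k}-\nu(B_k)$ has no $\Lambda$\=/invariant component in $L^2(X,\nu)$, so $0$ lies in the $L^2$\=/closed convex hull of $\braces{\lambda\cdot f_k:\lambda\in\Lambda}$. Pairing a small-norm convex combination $\sum c_i\lambda_i f_k$ with $\infun_A$ and applying Cauchy--Schwarz yields, for any preassigned $\epsilon>0$, some index $i$ with $\nu(\lambda_iA\cap B_k)\leq \beta\nu(B_k)+\sqrt{\beta}\epsilon$; taking $\lambda_{k+1}:=\lambda_i$, one obtains the jump estimate
\[
\nu(B_{k+1})-\nu(B_k)\geq \beta\bigpar{1-\nu(B_k)}-\sqrt{\beta}\epsilon.
\]

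\textbf{Closing the argument.} Iterating with $\epsilon$ of order $\beta$, the residuals $a_k:=1-\nu(B_k)$ satisfy a recurrence $a_{k+1}\leq(1-c_0\beta)a_k$ for an absolute constant $c_0>0$, so the first index $k^*$ with $\nu(B_{k^*})\geq\alpha_0$ obeys $k^*\beta\leq C(\alpha_0)$ for an explicit constant. Since each step adds at most $\beta$ to the measure, the stopping condition also gives $\alpha_0\leq \nu(B_{k^*})<\alpha_0+\beta\leq 2\alpha_0\leq \tfrac{1}{2}$, where the final inequality is precisely where the assumption $\alpha_0\leq\tfrac{1}{4}$ enters. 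The hypothesis therefore applies to $B_{k^*}$ and yields $\nu(\acbdry_S B_{k^*})>c\,\nu(B_{k^*})\geq c\alpha_0$; combining this with the union bound $\nu(\acbdry_S B_{k^*})\leq k^*\nu(\acbdry_S A)$ gives
\[
 \nu(\acbdry_S A)>\frac{c\alpha_0}{k^*}\geq \frac{c\alpha_0}{C(\alpha_0)}\,\nu(A),
\]
establishing $S$\=/expansion in measure with explicit constant $c'=c\alpha_0/C(\alpha_0)>0$.

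\textbf{Main obstacle.} The technical core is the quantitative inductive construction: one must tune the errors $\epsilon_k$ arising from the mean-ergodic convex approximation small enough (of order $\beta$) for the jump $\beta(1-\nu(B_k))-\sqrt{\beta}\epsilon_k$ to remain comparable to $\beta$ throughout the iteration, so that the recurrence on $a_k$ closes and $k^*\beta$ stays bounded by a constant depending only on $\alpha_0$. A minor but necessary observation is that the Alaoglu--Birkhoff/mean ergodic statement used does not require amenability of $\Lambda$, so the argument applies to the full generality of the hypothesis.
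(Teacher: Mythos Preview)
Your argument is correct and rests on the same two ingredients as the paper's proof: commutation gives $\acbdry_S(\lambda A)=\lambda\,\acbdry_S A$ with the same measure, and ergodicity of $\Lambda$ lets one control overlaps of translates. Where you diverge is in how these are assembled. The paper proceeds by a \emph{doubling recursion}: it defines $\fnc(\alpha)=\inf\{\nu(\acbdry_SA)/\nu(A):\alpha\le\nu(A)\le\tfrac12\}$, uses ergodicity to find a single $\lambda$ with $\nu(A\cap A\lambda)\le\nu(A)^2$, and deduces (via the quadratic $t^2-2t+\alpha$) a scale relation $\fnc(\delta\alpha)\ge(1-\alpha/2)\fnc(\alpha)$ for a fixed $\delta\in(4-2\sqrt3,1)$; iterating and bounding the infinite product $\prod_n(1-\delta^n\alpha_0/2)$ yields a uniform lower bound on $\fnc$. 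You instead perform an \emph{additive construction}: stack $k^*\sim \alpha_0/\beta$ nearly-disjoint translates of a single small set $A$ until the union enters $[\alpha_0,\tfrac12]$, then invoke the hypothesis once and divide by $k^*$.

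Both routes are valid; the trade-offs are mild. The paper's recursion is intrinsically scale-invariant (it bounds $\fnc(\alpha)$ directly rather than working with a specific $A$) and needs only the elementary fact $\inf_\lambda\nu(A\cap A\lambda)\le\nu(A)^2$, at the cost of the infinite-product lemma. Your approach is conceptually more direct---one long construction instead of an iterated functional inequality---but relies on the Alaoglu--Birkhoff mean ergodic theorem and requires tracking the errors $\epsilon_k$ through $O(1/\beta)$ steps rather than $O(\log(1/\beta))$. In both cases the hypothesis $\alpha_0\le\tfrac14$ enters for the same reason: to ensure the inflated set stays below $\tfrac12$.
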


\noindent\emph{Proof.}
For each $\alpha \in (0,\frac{1}{4}]$, let 
\begin{equation}\label{eq:fnc.as.infimum}
 \fnc(\alpha)\coloneqq\inf \bigbraces{\frac{\nu(\acbdry_S A)}{\nu(A)}\bigmid A\subseteq X,\ \alpha\leq\nu(A)\leq \frac 12}. 
\end{equation}
By assumption, we have $\fnc(\alpha_0)\geq c$. We will show that there exists an $\epsilon>0$ (depending only on $\alpha_0$) such that $\fnc(\alpha)\geq \epsilon\fnc(\alpha_0)$ for every $0<\alpha\leq\frac 12$. This implies that the $\Gamma$\=/action is $(\epsilon'c,S)$\=/expanding in measure for every $\epsilon'<\epsilon$. 
Since $\fnc$ is a non\=/decreasing function, it is enough to investigate its behaviour as $\alpha\to 0$. We will show that existence of the $\Lambda$\=/action implies that $\fnc(\alpha)$ cannot decrease too quickly as $\alpha$ goes to zero. 

We begin by noting that
\[
 \max_{0<\alpha\leq\frac 14}\frac{1-\sqrt{1-\alpha}}{\alpha}=4-2\sqrt{3}<1
\]
and we fix a $\delta>0$ such that $4-2\sqrt{3}<\delta<1$. 
We are now going to produce a lower bound on $\fnc(\delta\alpha)$ in terms of $\fnc(\alpha)$. 

Fix $0<\alpha\leq \frac 14$ and consider a measurable subset $A\subseteq X$ with $\delta\alpha\leq\nu(A)\leq\frac 14$. Since $(X,\nu)\curvearrowleft\Lambda$ is ergodic and measure\=/preserving, it follows that
\[
 \inf_{\lambda\in\Lambda} \nu(A\cap (A\cdot \lambda))\leq \nu(A)^2
\]
and hence
\begin{equation}\label{eq:ergodicity.small.interesection}
 \sup_{\lambda\in\Lambda} \nu(A\cup (A\cdot \lambda))= \sup_{\lambda\in\Lambda} \bigbrack{2\nu(A)-\nu(A\cap(A\cdot \lambda))}
 \geq 2\nu(A)-\nu(A)^2.
\end{equation}
The equation $\nu(A)^2-2\nu(A)+\alpha=0$ has solutions $1\pm\sqrt{1-\alpha}$. By the choice of $\delta$ it follows that $1-\sqrt{1-\alpha}<\delta\alpha\leq\nu(A)\leq\frac 14< 1+\sqrt{1-\alpha}$ and hence we obtain that $2\nu(A)-\nu(A)^2 > \alpha$.

Since the actions $\Gamma\curvearrowright X\curvearrowleft \Lambda$ commute, we see that $\acbdry_S(A\cdot \lambda)=(\acbdry_SA)\cdot \lambda$. Since $\nu$ is $\Lambda$\=/invariant, it follows that $\acbdry_S(A\cdot \lambda)$ and $\acbdry_S A$ have the same measure. Note that for every pair of measurable sets $C_1,C_2\subseteq X$ we have
\[
 \nu\paren{\acbdry_S(C_1\cup C_2)}\leq \nu(\acbdry_S C_1)+\nu(\acbdry_S C_2)-\nu(\acbdry_S (C_1\cap C_2)).
\]
In particular, for every $\delta\alpha\leq\nu(A)\leq\frac 14$ we have
\begin{align*}
 2\nu(\acbdry_S A)
 &\geq \sup_{\lambda\in\Lambda}\bigbrack{ \nu(\acbdry_S A)+ \nu(\acbdry_S (A\cdot \lambda))-\nu\bigparen{\acbdry_S (A\cap (A\cdot \lambda))} } \\
 &\geq \sup_{\lambda\in\Lambda}\nu\bigparen{\acbdry_S (A\cup (A\cdot \lambda))} 
 \geq \sup_{\lambda\in\Lambda} \fnc(\alpha)\nu(A\cup(A\cdot\lambda)) \\
 &\geq \fnc(\alpha)\nu(A)(2-\nu(A)),
\end{align*}
where we used \eqref{eq:ergodicity.small.interesection} for the last two inequalities. Note that the second\=/to\=/last inequality holds because $\nu(A\cup(A\cdot\lambda))\leq \frac 12$: this is where we need the assumption $\nu(A)\leq\frac 14$.

Now, either $\fnc(\delta\alpha)=\fnc(\alpha)$ or the infimum \eqref{eq:fnc.as.infimum} in the definition of $\fnc(\delta\alpha)$ can be approached by those $A\subseteq X$ with $\delta\alpha\leq\nu(A)<\alpha$. In either case, it follows from the above argument that 
\begin{equation}\label{eq:homogeneous recursion inequality}
 \fnc(\delta\alpha)\geq\bigparen{1-\frac{\alpha}{2}}\fnc(\alpha)
\end{equation}
for every $0<\alpha\leq\frac 14$. 
In turn, we can recursively obtain a bound on $\fnc(\delta^n\alpha_0)$ in terms of $\fnc(\alpha)$.

To complete the proof of Theorem~\ref{thm:non.homogeneous} it is enough to find a lower bound for the sequence $(\fnc(\delta^n\alpha_0))_{n\in \NN}$. A recursive application of \eqref{eq:homogeneous recursion inequality} reduces the problem to some elementary computations. In particular, we can complete the proof by applying the following lemma with constants $C\coloneqq\frac{ \alpha_0}{2}$ and $a_0\coloneqq \fnc(\alpha_0)$. \qed

\begin{lem}\label{lem:converging.sequence}
Let $\delta<1$, $0<C<1$, and $a_0>0$. Then there exists $\Phi>0$ depending only on $\delta $ and $C$ such that the recursively defined sequence
 \[
  a_{n+1}\coloneqq\paren{1-\delta^n C}a_n
 \]
 satisfies $a_n\geq \Phi a_0$ for every $n\in\NN$.
\end{lem}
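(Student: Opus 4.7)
The plan is to unroll the recursion and control the resulting (finite, then infinite) product by standard elementary estimates. Explicitly, since $a_{n+1} = (1-\delta^n C)a_n$, one has
\[
 a_n = a_0 \prod_{k=0}^{n-1}\paren{1-\delta^k C}.
\]
Focusing on the case $0 \leq \delta < 1$ (which is the only case relevant to Theorem~\ref{thm:non.homogeneous}, where one has $\delta \in (4-2\sqrt 3,1)$), each factor $1-\delta^k C$ lies in $(0,1)$ because $\delta^k C \leq C < 1$. Hence every $a_n$ is positive, the sequence is decreasing, and it suffices to bound the corresponding infinite product from below by a positive constant depending only on $\delta$ and $C$.

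Taking logarithms, the task reduces to showing that $\sum_{k=0}^\infty -\log(1-\delta^k C)$ is finite and uniformly controlled. I would use the elementary inequality $-\log(1-x) \leq x/(1-x)$, valid for $0 \leq x < 1$ and obtained directly from $-\log(1-x) = \int_0^x (1-t)^{-1} \d t \leq x/(1-x)$. Applied to $x=\delta^k C$, together with the uniform bound $1-\delta^k C \geq 1-C$, this gives
\[
 -\log(1-\delta^k C) \leq \frac{\delta^k C}{1-\delta^k C} \leq \frac{\delta^k C}{1-C}.
\]
Summing the resulting geometric series yields
\[
 \sum_{k=0}^{n-1} -\log(1-\delta^k C) \leq \frac{C}{(1-C)(1-\delta)},
\]
and exponentiating produces $a_n \geq \Phi\, a_0$ with
\[
 \Phi \coloneqq \exp\paren*{-\frac{C}{(1-C)(1-\delta)}} > 0,
\]
which depends only on $\delta$ and $C$, as required.

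There is no substantive obstacle: the only delicate point is to keep the summands $\delta^k C$ uniformly bounded away from $1$, so that a single denominator $(1-C)$ suffices when applying the inequality $-\log(1-x) \leq x/(1-x)$, and this is immediate from $\delta^k C \leq C < 1$.
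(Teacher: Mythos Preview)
Your proof is correct and follows essentially the same approach as the paper: unroll the recursion into a product, take logarithms, and bound the resulting series to obtain an explicit $\Phi$. The only difference is cosmetic: where the paper bounds $-\log(1-x)$ via its Taylor expansion $\sum_{j\geq 1}x^j/j\leq\sum_{j\geq 1}x^j$, you use the inequality $-\log(1-x)\leq x/(1-x)$; both routes lead to the same constant $\Phi=\exp\bigl(-\,(1-\delta)^{-1}(1-C)^{-1}\bigr)$ (yours is in fact marginally sharper by a factor of $C$ in the exponent).
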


\begin{proof}[Proof of Lemma~\ref{lem:converging.sequence}]
 For every $n\geq 1$ we see that
 \[
  a_n= a_0\prod_{k=1}^n \paren{1-\delta^kC}
 \]
 and therefore 
 \[
  \log(a_n)= \log(a_0) + \sum_{k=1}^n \log\paren{1-\delta^kC}.
 \]
 By the Taylor expansion of the logarithm,
 \[
  \sum_{k=1}^n \log\paren{1-\delta^kC}
  =\sum_{k=1}^n\sum_{j\geq 1}-\frac{(\delta^kC)^j}{j}
  > \sum_{k\in\NN}\sum_{j\geq 1}-(\delta^kC)^j
  = \sum_{j\geq 1}-\frac{C^j}{1-\delta^j}
 \]
 and the latter is bounded below from $-(1-\delta)^{-1}(1-C)^{-1}$. The claim follows by letting $\Phi\coloneqq\exp(-(1-\delta)^{-1}(1-C)^{-1})$.
\end{proof}

\begin{rmk} 
Our proof of Theorem~\ref{thm:homogeneous strong ergodic} is inspired by analogous results in the setting of graphs \cite[Section 7]{structure}, where it is proved that sequences of (bounded degree) vertex\=/transitive graphs are asymptotic expanders \emph{if and only if} they are genuine expanders. In the current setting, the existence of an ergodic commuting measure-preserving action is the dynamical analogue of vertex\=/transitivity. 

The discretisation procedure introduced in \cite{Vig19} provides more intuition for this analogy. Automorphisms of an action $\Gamma\curvearrowright(X,\nu)$ induce (coarse) automorphisms of the associated approximating graphs (see Definition \ref{defn:approximating graph} below), and ergodicity of the automorphism group implies (coarse) transitivity for said graphs. On the other hand, it is usually the case that asymptotic expansion (\emph{resp.} expansion) for actions is equivalent to asymptotic expansion (\emph{resp.} expansion) for the approximating graphs (see Section~\ref{sec:measured.appgraphs.and.asymp.expansion}). Using this correspondence, Theorem~\ref{thm:homogeneous strong ergodic} should be regarded as an analogue of \cite[Theorem 7.2]{structure}.

However, we should point out that Theorem~\ref{thm:homogeneous strong ergodic} does not follow from the results in \cite{structure}. More precisely, the fact that measurable automorphisms give rise to \emph{coarse} automorphisms of graphs turns out to be a substantial issue, which prevents us from applying tools in \cite{structure} directly. This forced us to use a more refined (and considerably more involved) argument than that of \cite[Theorem 7.2]{structure}.
\end{rmk}

Combining Proposition \ref{prop:strongly ergodic iff asymptotic expanding} with Theorem \ref{thm:homogeneous strong ergodic}, we recover the following:
\begin{cor}[{\cite[Lemma 10]{chifan2010ergodic}}]\label{cor:commute}
 Let $\rho\colon\Gamma\curvearrowright (X,\nu)$ be a measure\=/preserving action of a countable discrete group $\Gamma$ on a probability space $(X,\nu)$, and $(X,\nu)\curvearrowleft \Lambda$ an ergodic measure\=/preserving action of a group $\Lambda$ which commutes with $\rho$. Then $\rho$ is strongly ergodic \emph{if and only if} it is expanding in measure.
\end{cor}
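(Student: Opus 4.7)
The plan is to derive Corollary \ref{cor:commute} directly from the combination of Proposition \ref{prop:strongly ergodic iff asymptotic expanding} and Theorem \ref{thm:homogeneous strong ergodic}, both of which have already been established. The proof should amount to nothing more than chaining these two results through the intermediate notion of asymptotic expansion in measure, after verifying that the hypotheses match up on the nose.

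The easy direction (expansion in measure implies strong ergodicity) is immediate: any $(c,k)$-expanding action is in particular $(\fnc,\fnk)$-asymptotically expanding with $\fnc \equiv c$ and $\fnk \equiv k$ constant, and Proposition \ref{prop:strongly ergodic iff asymptotic expanding} then delivers strong ergodicity since the action is measure-preserving (hence measure-class-preserving) on a probability space. Note that this direction does not even require the commuting $\Lambda$-action.

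For the non-trivial direction, I would proceed as follows. Assume $\rho$ is strongly ergodic. Since $\rho$ is measure-preserving on $(X,\nu)$, Proposition \ref{prop:strongly ergodic iff asymptotic expanding} gives that $\rho$ is asymptotically expanding in measure, that is, $(\fnc,\fnk)$-asymptotically expanding for some functions $\fnc,\fnk$. Specialising at the scale $\alpha_0 \coloneqq \tfrac{1}{4}$, set $c \coloneqq \fnc(\tfrac{1}{4}) > 0$ and $S \coloneqq B_{\fnk(1/4)}$, which is finite and symmetric. By Definition \ref{defn:asymptotic expanding in measure}, every measurable $A \subseteq X$ with $\tfrac{1}{4} \leq \nu(A) \leq \tfrac{1}{2}$ satisfies $\nu(S\cdot A) > (1+c)\nu(A)$, and since $A \subseteq S\cdot A$ (because $1 \in S$) this rewrites as $\nu(\acbdry_S A) > c\nu(A)$ on the same range. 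This is exactly the hypothesis of Theorem \ref{thm:homogeneous strong ergodic}, which — using the commuting ergodic measure-preserving action $(X,\nu)\curvearrowleft \Lambda$ — promotes this local estimate at scale $\alpha_0 = \tfrac{1}{4}$ to $S$-expansion in measure on the full range $0 < \nu(A) \leq \tfrac{1}{2}$. In particular, $\rho$ is expanding in measure, as required.

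The main obstacle is entirely absorbed into Theorem \ref{thm:homogeneous strong ergodic}, whose proof is the delicate part: it is there that the ergodic $\Lambda$-action is used, via the averaging inequality $\inf_{\lambda \in \Lambda}\nu(A \cap (A\cdot\lambda)) \leq \nu(A)^2$ and the bootstrap argument extending $\fnc(\alpha)$ from $\alpha_0$ down to arbitrarily small $\alpha$. At the level of Corollary \ref{cor:commute} itself, the proof is a one-line deduction, so no further ingredient beyond the preceding results is needed.
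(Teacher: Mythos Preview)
Your proposal is correct and follows exactly the paper's approach: the paper simply states that the corollary is obtained by ``combining Proposition~\ref{prop:strongly ergodic iff asymptotic expanding} with Theorem~\ref{thm:homogeneous strong ergodic}'', and you have spelled out precisely how that combination works, including the specialisation at $\alpha_0=\tfrac14$ needed to match the hypothesis of Theorem~\ref{thm:homogeneous strong ergodic}.
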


As already noted in \cite{chifan2010ergodic}, the above corollary generalises {\cite[Proposition 3.1]{abert2012dynamical}} (which is the key ingredient in the proof of \cite[Theorem 4]{abert2012dynamical}):

\begin{cor}[{\cite[Proposition 3.1]{abert2012dynamical}}]
 Let $G$ be a compact group with a Haar measure $m$ and $\Gamma< G$ a countable subgroup. Then the action by left\=/multiplication $\Gamma\curvearrowright (G,m)$ is strongly ergodic \emph{if and only if} it has a spectral gap.
\end{cor}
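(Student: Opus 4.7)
The plan is to deduce this as a direct application of Corollary~\ref{cor:commute}, by exhibiting a natural ergodic measure\=/preserving action on $(G,m)$ that commutes with the left\=/multiplication action of $\Gamma$. The obvious candidate is the action of $G$ on itself by right multiplication.

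First, I would set $\Lambda\coloneqq G$, viewed as an abstract group, and consider the action $(G,m)\curvearrowleft \Lambda$ given by $x\cdot \lambda \coloneqq x\lambda$. Since $m$ is a Haar measure on a compact group it is both left\=/ and right\=/invariant, so this action is measure\=/preserving. It is also ergodic: in fact it is transitive, and any measurable subset invariant under right multiplication by all of $G$ must have either full or zero measure. Finally, left and right multiplication trivially commute, so the action $\rho\colon\Gamma\curvearrowright (G,m)$ (which is itself measure\=/preserving by left\=/invariance of the Haar measure) commutes with $\Lambda\curvearrowright (G,m)$.

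Applying Corollary~\ref{cor:commute}, the action $\rho$ is strongly ergodic if and only if it is expanding in measure. Since $\rho$ is measure\=/preserving, expansion in measure is equivalent to the spectral gap property (this equivalence is recalled in the subsection on spectral gaps, see also \cite[Section~7]{Vig19}). Chaining these two equivalences yields the desired conclusion.

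There is essentially no obstacle here: the only subtlety is to make sure that the commuting action $\Lambda\curvearrowright (G,m)$ is admissible in the hypotheses of Corollary~\ref{cor:commute}, \emph{i.e.}, that the corollary does not require $\Lambda$ to be countable. Inspection of the statement of Corollary~\ref{cor:commute} (and of Theorem~\ref{thm:homogeneous strong ergodic} on which it rests) shows that $\Lambda$ is only assumed to be ``a group'' acting ergodically and by measure\=/preserving transformations, so taking $\Lambda=G$ is allowed.
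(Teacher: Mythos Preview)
Your proposal is correct and follows exactly the same approach as the paper: the paper's proof is a single sentence observing that right multiplication $(G,m)\curvearrowleft G$ is measure\=/preserving, ergodic and commutes with left multiplication, after which Corollary~\ref{cor:commute} applies. Your write\=/up simply spells out the details (bi\=/invariance of Haar measure, transitivity giving ergodicity, and the passage from expansion in measure to spectral gap) and correctly notes that no countability is required of $\Lambda$.
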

\begin{proof}
 The action of right multiplication $(G,m)\curvearrowleft G$ is measure\=/preserving, ergodic and  commutes with left multiplications.
\end{proof}

\section{(Asymptotic) expansion on measured metric spaces and approximations}
\label{sec:measured.appgraphs.and.asymp.expansion}

The aim of this section is to extend the discretisation technique introduced in \cite{Vig19} and show that the notion of asymptotic expansion in measure is indeed the dynamical analogue of asymptotic expansion for sequences of finite metric spaces as defined in \cite{intro}.

\subsection{Measured asymptotic expanders}
Instead of considering ordinary asymptotic expanders (Definition~\ref{defn:asymptotic.expanders}), it will be more natural to work on their measured counterparts.

By a \emph{probability metric space} $\T^\nu$ we simply mean a finite or countably infinite discrete metric space $\T$ equipped with a probability Borel measure $\nu$ on $\T$ (note that we usually omit the metric in the notation). For simplicity, we also write $|A|_\nu \coloneqq \nu(A)$ for $A \subseteq \T^\nu$.

As a measured version for asymptotic expanders (see Definition \ref{defn:asymptotic.expanders}), we introduce the following:
\begin{de}
 A probability metric space $\T^\nu$ is called a \emph{measured asymptotic expander} if there are functions $\fnc\colon(0,\frac{1}{2}]\to \RR_{>0}$ and $\fnk\colon(0,\frac{1}{2}]\to \NN$ such that for every $\alpha\in(0,\frac{1}{2}]$, we have
\[
 \mabs{N_{\fnk(\alpha)}(A)} > \paren{1+\fnc(\alpha)}\mabs{A}
\]
for every subset $A \subseteq \T$ with $\alpha\leq \mabs{A} \leq \frac{1}{2}$. In this case, we say that $\T^\nu$ is a \emph{measured $(\fnc,\fnk)$\=/asymptotic expander}. 

A sequence of probability metric spaces $\{\T_n^{\nu_n}\}_{n\in \NN}$ is called a sequence of \emph{measured asymptotic expanders} if there are functions $\fnc\colon(0,\frac{1}{2}]\to \RR_{>0}$ and $\fnk\colon(0,\frac{1}{2}]\to \NN$ such that each $\T_n^{\nu_n}$ is a measured $(\fnc,\fnk)$\=/asymptotic expander. In this case, we say that $\{\T_n^{\nu_n}\}_{n\in \NN}$ is a sequence of \emph{measured $(\fnc,\fnk)$\=/asymptotic expanders}.
\end{de}

\begin{rmk}
 The above definition makes sense also for general (non necessarily countable and discrete) probability metric spaces. We do not need this level of generality in this work.
\end{rmk}

We collect in the following lemma a few basic results about measured asymptotic expanders. These are analogues of Lemma \ref{lem:annoying 1/2 upper bound} and Lemma~\ref{lem:equiv.defn.for.asymp.expansion.domain}(2) respectively. The same proofs work \emph{mutatis mutandis}, and are hence omitted.

\begin{lem}\label{lem:properties.of.measured.asymptotic.expanders}
Let $\{\T_n^{\nu_n}\}_{n\in \NN}$ be a sequence of measured $(\fnc,\fnk)$\=/asymptotic expanders. Then:
\begin{enumerate}
 \item There are functions $\fnb\colon[\frac{1}{2}, 1)\to \RR_{>0}$ and $\fnh\colon[\frac{1}{2}, 1)\to \NN$ depending on $\fnc$ and $\fnk$ such that for every $\beta\in[\frac{1}{2}, 1)$, we have
\[
\mabs{N_{\fnh(\beta)}(A)} > \paren{1+\fnb(\beta)}\mabs{A}
\]
for every $n\in\NN$ and $A \subseteq \T_n^{\nu_n}$ with $\frac{1}{2}\leq \mabs{A} \leq \beta$. If $\fnk$ is constant, $\fnh$ can be taken to be the same constant.
 \item For every $c\in(0,1)$ there exists $\fnk'\colon(0,\frac 12]\to\NN$ such that $\{\T_n^{\nu_n}\}_{n\in \NN}$ is a sequence of measured $(c,\fnk')$\=/asymptotic expanders.
\end{enumerate}
\end{lem}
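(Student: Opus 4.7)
The plan is to mimic the proofs of Lemma~\ref{lem:annoying 1/2 upper bound} and Lemma~\ref{lem:equiv.defn.for.asymp.expansion.domain}(2) in the measured metric space setting, replacing ``acting boundary'' arguments with metric neighbourhood arguments. The only subtlety is to verify that metric neighbourhoods satisfy the analogue of the symmetry property $\acbdry_S A = \acbdry_S(X \setminus A)$ (modulo $A$), which is what makes the ``complement trick'' work in the probabilistic setting. In a metric space this symmetry is automatic since $x \in N_k(y) \iff y \in N_k(x)$.

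For part (1), I would set $\fnh(\beta) \coloneqq \fnk\bigparen{\frac{1-\beta}{2}}$ and, given $A \subseteq \T_n^{\nu_n}$ with $\frac{1}{2} \leq \mabs{A} \leq \beta$, consider the ``far complement'' $D \coloneqq \T_n \smallsetminus N_{\fnh(\beta)}(A)$. Split into two cases depending on whether $\mabs{D}$ is small (say $< \frac{1-\beta}{2}$), in which case $\mabs{N_{\fnh(\beta)}(A)} > \frac{1+\beta}{2} \geq \frac{1+\beta}{2\beta}\mabs{A}$ directly, or $\mabs{D} \geq \frac{1-\beta}{2}$. In the latter case, since $|A| \geq \frac{1}{2}$ forces $|D| \leq \frac{1}{2}$, apply the asymptotic expansion hypothesis to $D$ at scale $\frac{1-\beta}{2}$. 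Using the symmetry observation, any point in $N_{\fnh(\beta)}(D) \smallsetminus D$ lies within $\fnh(\beta)$ of $A$ yet cannot be in $A$ (else some point of $D$ would be within $\fnh(\beta)$ of $A$, contradicting the definition of $D$); hence $N_{\fnh(\beta)}(D) \smallsetminus D \subseteq N_{\fnh(\beta)}(A) \smallsetminus A$, which gives the desired lower bound for $\mabs{N_{\fnh(\beta)}(A)}$. Setting $\fnb(\beta) \coloneqq \min\bigbraces{\frac{1-\beta}{2\beta},\, \fnc\paren{\frac{1-\beta}{2}}\frac{1-\beta}{2\beta}}$ completes the proof. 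The ``Furthermore'' clause follows immediately from the definition of $\fnh$.

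For part (2), I would run an iteration argument: fix $c \in (0,1)$ and $\alpha \in (0, \frac{1}{2}]$. Using part (1) with $\beta = \frac{1+c}{2}$, there are $b > 0$ and $h \in \NN$ such that every $A$ with $\alpha \leq \mabs{A} \leq \frac{1+c}{2}$ satisfies $\mabs{N_h(A)} > (1+b)\mabs{A}$. Set $m \coloneqq \lceil \log_{1+b}(1+c) \rceil$ and $\fnk'(\alpha) \coloneqq m h$. Now given $A$ with $\alpha \leq \mabs{A} \leq \frac{1}{2}$, either at some intermediate stage $j \leq m$ we already have $\mabs{N_{jh}(A)} > \frac{1+c}{2} \geq (1+c)\mabs{A}$ and we are done, or the iterated bound $\mabs{N_{mh}(A)} \geq (1+b)^m \mabs{A} \geq (1+c)\mabs{A}$ applies.

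There is no real obstacle here: the arguments are formally identical to their measure\=/theoretic counterparts once one observes that in a metric space neighbourhoods are automatically symmetric, so the inclusion $N_r(D) \smallsetminus D \subseteq N_r(A) \smallsetminus A$ used in part (1) holds without further hypothesis. The only mild care needed is to keep track of constants and to note that in part (2) the resulting $\fnk'$ depends on both $c$ and $\alpha$, as claimed in the statement.
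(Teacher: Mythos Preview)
Your proposal is correct and follows precisely the route the paper intends: the paper omits the proof, stating that the arguments of Lemma~\ref{lem:annoying 1/2 upper bound} and Lemma~\ref{lem:equiv.defn.for.asymp.expansion.domain}(2) carry over \emph{mutatis mutandis}, and you have accurately reconstructed those arguments in the metric setting, including the key symmetry observation $N_r(D)\smallsetminus D\subseteq N_r(A)\smallsetminus A$ for $D=\T_n\smallsetminus N_r(A)$. The only minor imprecision is that in part~(2) the constants $b,h$ for the full range $\alpha\leq\mabs{A}\leq\frac{1+c}{2}$ come from combining part~(1) with the original $(\fnc,\fnk)$-hypothesis on $[\alpha,\frac12]$, not from part~(1) alone; this is the same ellipsis the paper makes in the proof of Lemma~\ref{lem:equiv.defn.for.asymp.expansion.domain}(2).
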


\begin{de}
 A sequence of probability metric spaces $\braces{\T^{\nu_n}_n}_{n\in\NN}$ has \emph{uniformly bounded measure ratios} if there exists $Q\geq 1$ such that 
 \[
  \frac{1}{Q}\leq\frac{\abs{v}_{\nu_n}}{\abs{w}_{\nu_n}}\leq Q
 \]
 for every pair of points $v,w\in\T^{}_n$ and $n\in\NN$.
\end{de}

 Note that probability metric spaces with uniformly bounded measure ratios must be finite. Also note that in this case, the cardinalities $\abs{\T^{}_n}$ go to infinity if and only if the measures $\abs{v_n}_{\nu_n}$ go to zero for any (hence, every) sequence of points $v_n\in\T^{}_n$. 
 
 The following lemma relates the notion of measured asymptotic expanders with that of the ordinary asymptotic expanders (Definition~\ref{defn:asymptotic.expanders}). 

\begin{lem}\label{lem:measured case vs ordinary case}
 Let $\braces{\T^{\nu_n}_n}_{n\in\NN}$ be a sequence of probability metric spaces with uniformly bounded measure ratios and $\abs{\T^{}_n}\to\infty$. Then $\braces{\T^{\nu_n}_n}_{n\in\NN}$ is a sequence of measured asymptotic expanders \emph{if and only if} their underlying metric spaces $\{\T^{}_n\}_{n\in \NN}$ are a sequence of asymptotic expanders. 
\end{lem}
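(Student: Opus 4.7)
The plan is to exploit the uniformly bounded measure ratios to set up a tight two-sided comparison between $|\cdot|_{\nu_n}$ and the normalised counting measure on $\T^{}_n$. Letting $Q\geq 1$ be the uniform bound, summing point masses forces $(Q|\T^{}_n|)^{-1}\leq \nu_n(\{v\}) \leq Q/|\T^{}_n|$ for every $v\in \T^{}_n$, and hence
\[
\frac{|A|}{Q|\T^{}_n|} \leq |A|_{\nu_n} \leq \frac{Q|A|}{|\T^{}_n|} \qquad\text{for every } A\subseteq \T^{}_n.
\]
This will serve as the elementary counterpart to Corollary~\ref{cor:equivalent measures have uniform ratios}, and both implications of the lemma will amount to translating the expansion inequalities through it.

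For the forward direction, I will suppose $\{\T^{\nu_n}_n\}$ are measured $(\fnc,\fnk)$-asymptotic expanders, fix $\alpha\in(0,\tfrac{1}{2}]$ and take $A\subseteq \T^{}_n$ with $\alpha|\T^{}_n|\leq|A|\leq|\T^{}_n|/2$. Applying the comparison to both $A$ and its complement $\T^{}_n\smallsetminus A$ (which has cardinality at least $|\T^{}_n|/2$) yields $|A|_{\nu_n}\in[\alpha/Q,\ 1-\tfrac{1}{2Q}]$. If $|A|_{\nu_n}\leq \tfrac{1}{2}$ I invoke the measured expander property at scale $\alpha/Q$; otherwise I invoke Lemma~\ref{lem:properties.of.measured.asymptotic.expanders}(1) at $\beta = 1-\tfrac{1}{2Q}<1$. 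In either case, there exist $k\in\NN$ and $c'>0$ depending only on $\alpha$, $Q$, $\fnc$ and $\fnk$ (crucially independent of $n$) with $|N_k(A)\smallsetminus A|_{\nu_n} > c'\,|A|_{\nu_n}$. Converting back through the comparison loses two factors of $Q$ and produces $|N_k(A)\smallsetminus A|> (c'/Q^2)|A|$, exhibiting the required uniform expansion on $\{\T^{}_n\}$.

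The converse direction is entirely symmetric: starting from $A\subseteq \T^{}_n$ with $\alpha\leq|A|_{\nu_n}\leq \tfrac{1}{2}$, the comparison places $|A|/|\T^{}_n|$ in $[\alpha/Q,\ 1-\tfrac{1}{2Q}]$, and I apply the asymptotic expander property together with the verbatim analogue of Lemma~\ref{lem:properties.of.measured.asymptotic.expanders}(1) for ordinary (unweighted) asymptotic expanders to handle the range $|A|>|\T^{}_n|/2$; its proof is identical to the measured version and so need not be repeated. The hypothesis $|\T^{}_n|\to\infty$ is used only here, to ensure that for each fixed $\alpha$ the lower bound $|A|\geq\alpha|\T^{}_n|/Q$ eventually exceeds any finite threshold dictated by Definition~\ref{defn:asymptotic.expanders}. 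The main (and only) obstacle is routine bookkeeping: tracking the two factors of $Q$ lost by the comparison and correctly splitting into cases around the $\tfrac{1}{2}$ threshold. I expect the write-up to be short and fully elementary.
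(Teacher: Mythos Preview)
Your proposal is correct and follows essentially the same route as the paper: derive the two-sided comparison $|A|/(Q|\T_n|)\leq |A|_{\nu_n}\leq Q|A|/|\T_n|$, use it to place $|A|_{\nu_n}$ (respectively $|A|/|\T_n|$) in a fixed interval bounded away from $0$ and $1$, invoke Lemma~\ref{lem:properties.of.measured.asymptotic.expanders}(1) (or its unweighted analogue) to get expansion, and convert back losing a factor of $Q^2$. Your bounds $[\alpha/Q,\,1-\tfrac{1}{2Q}]$ are in fact slightly sharper than the paper's $[\alpha/Q^2,\,Q^2/(Q^2+1)]$; the one inaccuracy is your explanation of where $|\T_n|\to\infty$ enters---it is not needed to ``exceed a finite threshold'' in the expansion argument (taking $\alpha'=\alpha/Q$ already suffices), but simply because it is part of Definition~\ref{defn:asymptotic.expanders} itself.
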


\begin{proof}
Let $\braces{\T^{\nu_n}_n}_{n\in\NN}$ be a sequence of measured asymptotic expanders and $Q \geq 1$ be the constant witnessing the bound on measure ratios. For any $n\in \NN$ and $A_n\subseteq \T_n$ with $\alpha\abs{\T_n}\leq\abs{A_n}\leq\frac{1}{2}\abs{\T_n}$, we have
\[
\frac{\alpha}{Q^2}\leq |A_n|_{\nu_n}\leq \frac{Q^2}{Q^2+1}.
\]
It hence follows from Lemma \ref{lem:properties.of.measured.asymptotic.expanders}(1) there exist $c$ and $k$ depending only on $\alpha$  such that $|N_k(A_n)|_{\nu_n}\geq(1+c) |A_n|_{\nu_n}$. It is then easy to deduce that $\abs{N_k(A_n)}\geq(1+\frac{c}{Q^2})\abs{A_n}$. The converse implication is analogous, hence omitted.
\end{proof}

\subsection{Measurable partitions and approximating spaces}\label{ssec:partitions.and.approximating.spaces}
A \emph{measurable partition} of a probability space $(X,\nu)$ is a partition of $X$ into a countable family $\P=\{R_i\mid i \in I\}$ of disjoint measurable subsets (regions), \emph{i.e.}, $X=\bigsqcup_{i\in I}R_i$. 

\begin{defn}[{\cite[Definition 3.4]{Vig19}}]\label{defn:bounded measure ratios}
A measurable partition $\P=\{R_i\mid i \in I\}$ has \emph{bounded measure ratios} if there exists a constant $Q\geq 1$ such that for every pair of regions $R_i,R_j$ in $\P$, we have
\[
\frac{1}{Q} \leq \frac{\nu(R_i)}{\nu(R_j)} \leq Q.
\]
\end{defn}

Given any measurable partition $\CP$ of $X$ and a measurable set $A\subseteq X$, we denote by $[A]_\CP$ the set
\[
 [A]_\CP\coloneqq \bigsqcup\bigbraces{R\in\CP\bigmid \nu(R\cap A)>0}\subseteq X
\]
(this notation is inspired by identifying $\CP$ with the equivalence relation given by ``belonging to the same region'', so that $[A]_\CP$ is the \emph{saturation} of $A$ \emph{up to measure zero subsets}). 
Similarly, if $W\subseteq \CP$ is a set of regions of the partition, we denote their union by $[W]_\CP\coloneqq\bigcup\braces{R\mid R\in W}$.

Recall that $\Gamma$ is a countable discrete group equipped with a proper length function $\ell$. One of the main objects of interest in \cite{Vig19} are graphs that approximate measurable actions of finitely generated groups. The following is a (measured) generalisation of that idea to actions of groups equipped with proper length functions:

\begin{defn}\label{defn:approximating graph}
Given a measurable action $\rho\colon \Gamma \act (X,\nu)$ of a countable discrete group $\Gamma$ on a probability space $(X,\nu)$ and a measurable partition $\P=\{R_i\mid i \in I\}$ of $(X,\nu)$, the associated \emph{approximating (metric) space} $\T_\rho(\P)$ is the metric space obtained by equipping $\P$ with the metric $d_{\P}$ defined by 
\[
d_{\P}(R, R')=
\min_{R=R_0,\ldots,R_n=R'}\Big\{ \sum_{i=1}^n \ell(\gamma_i)\bigmid \gamma_i\in \Gamma \mbox{~s.t.~} \nu\big((\gamma_i\cdot R_{i-1}) \cap R_i\big) >0\Big\}\cup\braces{+\infty},
\]
where $n\in\NN$ and $R_i\in \CP$ for every $i=0,\ldots,n$.
Note that $d_{\P}(R, R')$ can never be infinity if the action is ergodic.
When there is no ambiguity, we denote the approximating space by $\T(\P)$.
Note that the approximating space $\T(\P)$ is always discrete since we require that the length function $\ell$ takes values in $\NN \cup \{0\}$.

The approximating space $\T_\rho(\CP)$ comes with a natural Borel measure given by $\mabs{R}\coloneqq\nu(R)$ for every $R\in\CP$. Equipped with this measure, $\T_\rho(\CP)$ is called the \emph{measured approximating (metric) space} associated with $\rho$, denoted by $\T_\rho^\nu(\CP)$. Again when the action is clear from the context, we will simply denote by $\mappgraph{\CP}$.
\end{defn}

The notion of the measured approximating space is designed in such a way that $\mabs{W}=\nu\paren{[W]_\CP}$ for every subset $W\subseteq \CP$. This leads to simple estimates on measured expansion for measured approximating spaces as we will see later.  

\begin{rmk}
If $\Gamma$ is finitely generated by a finite symmetric set $S$ and $\ell$ is the associated word length, the approximating space $\T_\rho(\CP)$ coincides with the approximating graph $\G_\rho(\P)$ (with the edge\=/path metric) defined in \cite[Definition 3.3]{Vig19}.

One of the main results in \cite{Vig19} was to reveal a strong relation between the Cheeger constants of the approximating graphs $\CG_\rho(\CP_n)$ and expansion properties of $\rho$.
The price to pay to compare a purely metric construct (the approximating graph) with a dynamical system is the proliferation of assumptions in the statements of various lemmas and theorems.
In this paper, we prefer to switch from graphs to measured metric spaces because these are better suited to describe approximations of actions on measure spaces.
\end{rmk}

The following lemma is almost immediate:

\begin{lem}\label{lem:asymp.expanding.action.implies.meas.asymp.exp}
Let $\Gamma \act (X,\nu)$ be a $(\fnc,\fnk)$\=/asymptotically expanding action on a probability space $(X,\nu)$. Then for every measurable partition $\P=\{R_i\mid i \in I\}$, the measured approximating space $\mappgraph{\CP}$ is a measured $(\fnc,\fnk)$\=/asymptotic expander.
\end{lem}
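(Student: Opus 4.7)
The plan is to transfer asymptotic expansion from the action on $(X,\nu)$ directly to the measured approximating space by the tautological identification $W \leftrightarrow [W]_\CP$ between subsets of $\CP$ and measurable subsets of $X$ that are unions of regions. Fix $\alpha \in (0,\tfrac 12]$ and any subset $W\subseteq \CP$ with $\alpha \leq \mabs{W}\leq \tfrac 12$. Set $A \coloneqq [W]_\CP$, so that $\nu(A)=\mabs{W}$, and hence $\alpha \leq \nu(A)\leq \tfrac12 = \tfrac 12\nu(X)$. The hypothesis that $\rho$ is $(\fnc,\fnk)$\=/asymptotically expanding in measure then yields
\[
 \nu\bigparen{B_{\fnk(\alpha)}\cdot A}>\bigparen{1+\fnc(\alpha)}\nu(A).
\]

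The crux is to bound $\nu\paren{B_{\fnk(\alpha)}\cdot A}$ from above by $\mabs{N_{\fnk(\alpha)}(W)}$. To do this, I would let $k\coloneqq \fnk(\alpha)$ and consider the collection
\[
 U\coloneqq \bigbraces{R\in\CP\bigmid \nu\paren{R\cap (B_k\cdot A)}>0}.
\]
Since $\CP$ partitions $X$ (up to measure zero), we have $B_k\cdot A\subseteq [U]_\CP$ up to a null set, and therefore $\nu(B_k\cdot A)\leq \nu([U]_\CP)=\mabs{U}$.

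The key verification is then $U\subseteq N_k(W)$. Indeed, if $R_i\in U$ there exist $\gamma\in B_k$ and $R_j\in W$ with $\nu\paren{R_i\cap (\gamma\cdot R_j)}>0$: this follows from writing $A=\bigsqcup_{R_j\in W} R_j$ and $B_k\cdot A=\bigcup_{\gamma\in B_k,\, R_j\in W}\gamma\cdot R_j$, so positivity of $\nu(R_i\cap (B_k\cdot A))$ forces at least one summand $\nu(R_i\cap \gamma\cdot R_j)$ to be positive. By the very definition of $d_\CP$ (Definition~\ref{defn:approximating graph}), this gives $d_\CP(R_i,R_j)\leq\ell(\gamma)\leq k$, so $R_i\in N_k(W)$.

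Putting these two observations together,
\[
 \mabs{N_{\fnk(\alpha)}(W)}\geq \mabs{U}\geq \nu\paren{B_{\fnk(\alpha)}\cdot A}>\bigparen{1+\fnc(\alpha)}\nu(A) = \bigparen{1+\fnc(\alpha)}\mabs{W},
\]
which is exactly the defining inequality of a measured $(\fnc,\fnk)$\=/asymptotic expander. The only subtle point in this argument is the passage from $B_k\cdot A$ to the saturation $[U]_\CP$, which requires keeping track of null sets; everything else is a direct bookkeeping translation between the dynamics and the combinatorics of the partition.
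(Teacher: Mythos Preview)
Your proof is correct and follows essentially the same route as the paper's own argument. The paper states the containment $B_k\cdot [W]_\CP \subseteq [N_k(W)]_\CP$ (up to null sets) in one line; you unpack this by introducing the intermediate saturation $U=[B_k\cdot A]_\CP$ and then verifying $U\subseteq N_k(W)$ from the definition of $d_\CP$, which is exactly the content of that containment.
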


\begin{proof}
First note that for every subset $W\subseteq \mappgraph{\CP}$ and $k\in \NN$, we have
\[
B_k\cdot [W]_\CP \subseteq \bigsqcup \big\{R\in\CP \bigmid d_\rho(R,W) \leq k\big\} = [N_k(W)]_\CP
\]
up to measure zero sets.

Given $\alpha \in (0,\frac{1}{2}]$, consider a subset $W\subseteq \mappgraph{\CP}$ with 
$\alpha\leq\mabs{W}\leq\frac{1}{2}$. 
Since $\alpha\leq\nu([W]_\CP)=\mabs{W}\leq \frac{1}{2}$, we deduce from the asymptotic expansion hypothesis that
\[
 \mabs{N_{\fnk(\alpha)}(W)}=\nu\bigparen{[N_{\fnk(\alpha)}(W)]_\CP}
 \geq \nu\bigparen{B_{\fnk(\alpha)}\cdot[W]_\CP}
 > (1+\fnc(\alpha))\nu([W]_\CP)=(1+\fnc(\alpha))\mabs{W}.
\]
Hence $\mappgraph{\CP}$ is a measured $(\fnc,\fnk)$\=/asymptotic expander space.
\end{proof}

\begin{exmp}
 It is proved in \cite[Theorem 11]{ozawa2016remark} that if $G$ is a connected simple Lie group with finite centre, $\Gamma< G$ a lattice and $H<G$ a closed non\=/amenable subgroup then $\Gamma\curvearrowright G/H$ is strongly ergodic (with respect to the measure\=/class induced by the Haar measure of $G$).
 
 If $H<G$ is cocompact and we choose any Riemannian metric on $G/H$, we can easily find a sequence of measurable partition $\CP_n$ with 
 uniformly bounded measure ratios (\emph{e.g.}, by considering Voronoi tessellations as in \cite{Vig19}). It then follows from Lemma~\ref{lem:measured case vs ordinary case} and Lemma~\ref{lem:asymp.expanding.action.implies.meas.asymp.exp} that the associated approximating spaces are asymptotic expanders.
\end{exmp}

\subsection{A converse implication: admissible sequences of partitions}

We will now prove a converse to Lemma~\ref{lem:asymp.expanding.action.implies.meas.asymp.exp}. 
The idea is to consider finer and finer partitions in such a way that we can recognise expansion properties of the action in terms of uniform expansion properties of the associated measured approximating spaces.
The appropriate notion of ``finer and finer partitions'' is somewhat technical, and depends on the action:

\begin{de}\label{defn:admissible.partitions}
 Let $\Gamma\curvearrowright(X,\nu)$ be a measurable action of a countable discrete group $(X,\nu)$ on a probability space $(X,\nu)$ and $k\in\NN$ a natural number. A sequence $\braces{\CP_n}_{n\in\NN}$ of measurable partitions of $X$ is called \emph{$k$\=/admissible (for the action)} if for every measurable subset $A\subseteq X$ and every $n\in\NN$, there is a subset $W_n\subseteq \CP_n$ such that for every $\gamma_1,\ldots,\gamma_q\in \Gamma$ with $\sum_{i=1}^q\ell(\gamma_i) \leq k$, we have
 \begin{equation}\label{eq:admissible.converge.in.measure}
  [\gamma_q\cdot[\;\cdots\,[\gamma_1\cdot[W_n]_{\CP_n}]_{\CP_n}\cdots]_{\CP_n}]_{\CP_n}
  \xrightarrow{\;n\to\infty\;}\, \gamma_q\circ\dots\circ \gamma_1(A)
 \end{equation}
 where the convergence is in measure (\emph{i.e.}, the measure of the symmetric difference goes to zero as $n$ grows to infinity). 
   
A sequence $\braces{\CP_n}_{n\in\NN}$ is \emph{$\infty$\=/admissible} if it is $k$\=/admissible for every $k\in\NN$.
\end{de}

The following is the crucial step to establish the converse to Lemma~\ref{lem:asymp.expanding.action.implies.meas.asymp.exp}:
\begin{prop}\label{prop: meas.asymp.exp implies asymptotic expanding}
 Let $\Gamma\curvearrowright (X,\nu)$ be a measure\=/class\=/preserving action of a countable discrete group $\Gamma$ on a non-atomic probability space $(X,\nu)$, $\fnc_0\colon(0,\frac{1}{2}]\to\RR_{>0}$ and $\fnk\colon(0,\frac{1}{2}]\to\NN$ be fixed functions. Assume that $\braces{\CP_n}_{n\in\NN}$ is a sequence of measurable partitions which is $\fnk(\alpha)$\=/admissible for every $\alpha\in(0,\frac{1}{2}]$. Then the following are equivalent:
 \begin{itemize}
  \item for every function $\fnc<\fnc_0$ (point-wise), $\braces{\mappgraph{\CP_n}}_{n\in\NN}$ is a sequence of measured $(\fnc,\fnk)$\=/asymptotic expanders;
  \item for every function $\fnc<\fnc_0$ (point-wise), the action is $(\fnc,\fnk)$\=/asymptotically expanding in measure.
 \end{itemize} 
\end{prop}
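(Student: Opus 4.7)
The implication ``action is $(\fnc,\fnk)$-asymptotically expanding for every $\fnc<\fnc_0$ $\Longrightarrow$ $\braces{\mappgraph{\CP_n}}$ is a sequence of measured $(\fnc,\fnk)$-asymptotic expanders for every $\fnc<\fnc_0$'' is immediate from Lemma~\ref{lem:asymp.expanding.action.implies.meas.asymp.exp} applied once for each $\fnc<\fnc_0$. The substantive direction is the converse.

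My plan for the converse is as follows. Fix $\fnc<\fnc_0$ pointwise and interpolate, choosing $\tilde\fnc$ with $\fnc<\tilde\fnc<\fnc_0$ pointwise (e.g.\ $\tilde\fnc\coloneqq(\fnc+\fnc_0)/2$). By hypothesis $\braces{\mappgraph{\CP_n}}$ is a sequence of measured $(\tilde\fnc,\fnk)$-asymptotic expanders. Given $\alpha\in(0,\tfrac12]$ and a measurable $A\subseteq X$ with $\alpha\leq\nu(A)\leq\tfrac12$, set $k\coloneqq\fnk(\alpha)$ and invoke $k$-admissibility of $\braces{\CP_n}$ applied to $A$, producing $W_n\subseteq\CP_n$. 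Unpacking the metric $d_\CP$ on $\mappgraph{\CP_n}$, the saturated neighbourhood $[N_k(W_n)]_{\CP_n}$ equals the finite union of iterated translation-saturations $[\gamma_q\cdot[\cdots\gamma_1\cdot[W_n]_{\CP_n}\cdots]_{\CP_n}]_{\CP_n}$ over sequences $\gamma_1,\ldots,\gamma_q$ with $\sum_{i=1}^q\ell(\gamma_i)\leq k$ (one may restrict to $\gamma_i\neq e$, so $q\leq k$); each such composition lies in $B_k$, and every element of $B_k$ arises as a trivial length-one sequence. Combined with admissibility this yields $[W_n]_{\CP_n}\to A$ and $[N_k(W_n)]_{\CP_n}\to B_k\cdot A$ in measure, hence $\mabs{W_n}\to\nu(A)$ and $\mabs{N_k(W_n)}\to\nu(B_k\cdot A)$. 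In the generic regime $\alpha<\nu(A)<\tfrac12$, eventually $\mabs{W_n}\in(\alpha,\tfrac12)$, so the measured expansion hypothesis at scale $\alpha$ gives $\mabs{N_k(W_n)}>(1+\tilde\fnc(\alpha))\mabs{W_n}$; passing to the limit yields $\nu(B_k\cdot A)\geq(1+\tilde\fnc(\alpha))\nu(A)>(1+\fnc(\alpha))\nu(A)$, where the final strict inequality exploits the pointwise gap $\tilde\fnc>\fnc$.

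The hard part will be the edge cases $\nu(A)\in\braces{\alpha,\tfrac12}$, in which $\mabs{W_n}$ can straddle the boundary of the admissible range $[\alpha,\tfrac12]$. For $\nu(A)=\tfrac12$ with $\alpha<\tfrac12$ I plan to shrink $A$ to an $A'\subsetneq A$ with $\nu(A')=\tfrac12-\epsilon\in(\alpha,\tfrac12)$---possible by non-atomicity of $(X,\nu)$---apply the generic argument to $A'$, and let $\epsilon\to 0$; the gap $\tilde\fnc>\fnc$ compensates for the shrinkage. For $\nu(A)=\alpha<\tfrac12$ I plan to enlarge $A$ to $A''=A\sqcup E$ with $\nu(A'')=\alpha+\epsilon<\tfrac12$, drawing $E$ from the ``tame'' sublevel set $X_N\coloneqq\braces{x\in X\mid \frac{\d\rho(\gamma^{-1})_*\nu}{\d\nu}(x)\leq N\text{ for every }\gamma\in B_k}$, which exhausts $X$ as $N\to\infty$ since $\rho$ is measure-class-preserving and $B_k$ is finite; then $\nu(B_k\cdot E)\leq\abs{B_k}N\epsilon$, and the generic argument applied to $A''$ together with $\nu(B_k\cdot A)\geq\nu(B_k\cdot A'')-\nu(B_k\cdot E)$ closes the case once $\epsilon$ is taken small relative to $\tilde\fnc(\alpha)-\fnc(\alpha)$. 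The most delicate situation is the coincident extremum $\alpha=\nu(A)=\tfrac12$, which I plan to handle by applying admissibility directly at $k=\fnk(\tfrac12)$, passing to a subsequence on which $\mabs{W_n}$ stays on one side of $\tfrac12$, and either invoking Lemma~\ref{lem:properties.of.measured.asymptotic.expanders}(1) (when $\mabs{W_n}>\tfrac12$) or augmenting $W_n$ by a small collection of tame regions to bring $\mabs{W_n'}$ up to $\tfrac12$---in each subcase exploiting the gap $\tilde\fnc>\fnc$ to absorb the resulting error terms in the limit.
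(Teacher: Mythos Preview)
Your proposal follows essentially the same line as the paper: one direction is Lemma~\ref{lem:asymp.expanding.action.implies.meas.asymp.exp}, and for the converse you use $k$\=/admissibility to produce $W_n$, identify $[N_k(W_n)]_{\CP_n}$ with the union of iterated saturations, pass to the limit, and patch the boundary cases. The interpolation $\fnc<\tilde\fnc<\fnc_0$ you perform upfront is exactly what the paper does at the end (it proves $\nu(B_k\cdot A)\geq(1+\fnc(\alpha))\nu(A)$ for the given $\fnc$ and then passes to a smaller $\fnc'<\fnc$ to recover the strict inequality).

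Your treatment of the edge cases is more elaborate than necessary. For both $\nu(A)=\tfrac12$ and $\nu(A)=\alpha$ (with $\alpha<\tfrac12$), the paper simply approximates $A$ monotonically inside $X$ by sets $A_m$ with $\alpha<\nu(A_m)<\tfrac12$ (from below for the upper endpoint, from above for the lower), applies the generic case to each $A_m$, and lets $m\to\infty$. The only point to check is that $\nu(B_k\cdot A_m)\to\nu(B_k\cdot A)$; this holds because the action is measure\=/class\=/preserving and $\nu$ is finite, so $\nu(A_m\symdiff A)\to 0$ forces $\nu(\gamma A_m\symdiff\gamma A)\to 0$ for each $\gamma\in B_k$. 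This replaces your tame\=/set and explicit Radon--Nikodym arguments entirely.

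There is one genuine concern in your plan for the coincident extremum $\alpha=\nu(A)=\tfrac12$. Invoking Lemma~\ref{lem:properties.of.measured.asymptotic.expanders}(1) when $\mabs{W_n}>\tfrac12$ yields expansion at radius $\fnh(\beta)=\fnk\bigl(\tfrac{1-\beta}{2}\bigr)$ (cf.\ the proof of Lemma~\ref{lem:annoying 1/2 upper bound}), not at $\fnk(\tfrac12)$, so it does not control $\mabs{N_{\fnk(1/2)}(W_n)}$ as you need; and ``augmenting $W_n$ up to $\tfrac12$'' may overshoot since region measures are discrete, leaving you again outside the admissible range. The paper does not single this case out either---its monotone approximation formally needs $\alpha<\tfrac12$ to land in the open interval---so both treatments are somewhat thin at this single point.
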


\begin{proof}
The sufficiency is given by Lemma~\ref{lem:asymp.expanding.action.implies.meas.asymp.exp}, so we only focus on the necessity. Fixing a function $\fnc<\fnc_0$ and a parameter $\alpha\in(0,\frac{1}{2}]$, let $k\coloneqq \fnk(\alpha)$ and $c\coloneqq\fnc(\alpha)$.

We begin by fixing a measurable subset $A\subseteq X$ with $\alpha<\nu(A)<\frac{1}{2}$ and letting $W_n\subseteq\CP_n$ be a sequence of subsets from the definition of $k$\=/admissibility. It follows from \eqref{eq:admissible.converge.in.measure} with $\gamma_1=\cdots=\gamma_q=1$ that $[W_n]_{\CP_n}$ converges in measure to $A$. Hence there exists an $n_0$ sufficiently large so that $\alpha<\mabs{W_n}=\nu([W_n]_{\CP_n})<\frac{1}{2}$ for every $n>n_0$. We can hence apply the assumption on $\mappgraph{\CP_n}$ to obtain:
\[
 \mabs{N_{k}(W_n)}>(1+c)\mabs{W_n}.
\]

On the other hand, it follows from the definition of approximating spaces that
\[
 [N_k(W_n)]_{\CP_n}=\bigcup\bigbraces{[\gamma_q\cdot[\;\cdots\,[\gamma_1\cdot[W_n]_{\CP_n}]_{\CP_n}\cdots]_{\CP_n}]_{\CP_n}
  \bigmid 
\gamma_1,\ldots, \gamma_q\in \Gamma \mbox{~s.t.~} \sum_{i=1}^q\ell(\gamma_i) \leq k}.
\]
Hence \eqref{eq:admissible.converge.in.measure} implies that
\[
 [N_k(W_n)]_{\CP_n}\longrightarrow 
 \bigcup\bigbraces{\gamma_q\circ\dots\circ \gamma_1(A) \bigmid \gamma_1,\ldots, \gamma_q\in \Gamma \mbox{~with~} \ell(\gamma_q \cdots\gamma_1) \leq k}
 = B_k\cdot A,
\]
where the convergence is in measure. It follows that
\[
 \nu(B_k\cdot A)=\lim_{n\to\infty}\,\mabs{N_k(W_n)}
 \geq \lim_{n\to\infty}\, (1+c)\mabs{W_n}=(1+c)\nu(A).
\]

It remains to deal with measurable $A\subseteq X$ with measure $\alpha$ or $\frac{1}{2}$. For $\nu(A)=\frac{1}{2}$, $\nu$ being non-atomic implies that there exists an increasing sequence of subsets $A_n\subseteq A$ so that $\nu(A_n)<\frac{1}{2}$ and $\nu(A_n)\to \nu(A)$. For each $n\in \NN$, it follows from the above analysis that $\nu(B_k\cdot A_n)\geq (1+c)\nu(A_n)$. Hence
\begin{equation}\label{eq:prop:limits.of.meas}
 \nu(B_k\cdot A)=\lim_{n\to\infty}\nu(B_k\cdot A_n)\geq \lim_{n\to\infty}(1+c)\nu(A_n)=(1+c)\nu(A),
\end{equation}
where the first equality holds because the action is measure\=/class\=/preserving and $\nu$ is finite. A similar argument works for $\nu(A)=\alpha$ as well. This finishes the proof that $\Gamma\curvearrowright (X,\nu)$ is $(\fnc',\fnk)$\=/asymptotically expanding in measure for every $\fnc'<\fnc$ (it is only necessary to pass to a smaller $\fnc'$ because the definition of asymptotic expansion requires strict inequalities).
\end{proof}

In the general case, we have the following non-quantitative version:

\begin{prop}\label{prop: meas.asymp.exp implies asymptotic expanding general case}
 Let $\Gamma\curvearrowright (X,\nu)$ be a measurable action of a countable discrete group $\Gamma$ on a probability space $(X,\nu)$, and $\braces{\CP_n}_{n\in\NN}$ a sequence of $\infty$\=/admissible measurable partitions. Then the measured approximating spaces $\mappgraph{\CP_n}$ are a sequence of measured asymptotic expanders \emph{if and only if} the action is asymptotically expanding in measure.
\end{prop}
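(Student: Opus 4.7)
The plan is to adapt the argument of Proposition~\ref{prop: meas.asymp.exp implies asymptotic expanding}, using Lemma~\ref{lem:properties.of.measured.asymptotic.expanders}(1) to bootstrap measured expansion to sets of measure slightly greater than $\frac{1}{2}$. This bootstrapping allows us to avoid both the non\=/atomicity hypothesis and the measure\=/class\=/preservation hypothesis of Proposition~\ref{prop: meas.asymp.exp implies asymptotic expanding}, at the cost of losing the quantitative control on the expansion functions.

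The sufficiency ($\Leftarrow$) is immediate from Lemma~\ref{lem:asymp.expanding.action.implies.meas.asymp.exp}: if the action is $(\fnc,\fnk)$\=/asymptotically expanding in measure, then every approximating space $\mappgraph{\CP_n}$ is a measured $(\fnc,\fnk)$\=/asymptotic expander.

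For the necessity ($\Rightarrow$), I would assume that $\braces{\mappgraph{\CP_n}}_{n\in\NN}$ is a sequence of measured $(\fnc_0, \fnk_0)$\=/asymptotic expanders for some functions $\fnc_0, \fnk_0$ and fix $\alpha\in(0,\frac{1}{2}]$; the goal is to produce $c_\alpha>0$ and $k_\alpha\in\NN$ (depending on $\alpha$ but not on $A$) witnessing asymptotic expansion at scale $\alpha$. First, applying Lemma~\ref{lem:properties.of.measured.asymptotic.expanders}(1) with $\beta\coloneqq\frac{3}{4}$ produces constants $k_\alpha$ and $c_\alpha$ such that $\mabs{N_{k_\alpha}(W)}>(1+c_\alpha)\mabs{W}$ uniformly over all $n\in\NN$ and all $W\subseteq\mappgraph{\CP_n}$ with $\frac{\alpha}{2}\leq\mabs{W}\leq\frac{3}{4}$. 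Then, for any measurable $A\subseteq X$ with $\alpha\leq\nu(A)\leq\frac{1}{2}$, the $k_\alpha$\=/admissibility of $\braces{\CP_n}$ provides a sequence $W_n\subseteq\CP_n$ with $[W_n]_{\CP_n}\to A$ in measure and, exactly as derived in the proof of Proposition~\ref{prop: meas.asymp.exp implies asymptotic expanding}, $[N_{k_\alpha}(W_n)]_{\CP_n}\to B_{k_\alpha}\cdot A$ in measure. For $n$ sufficiently large $\mabs{W_n}\in[\frac{\alpha}{2},\frac{3}{4}]$, and the uniform bound yields $\mabs{N_{k_\alpha}(W_n)}>(1+c_\alpha)\mabs{W_n}$. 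Since convergence in measure of subsets of a finite measure space implies convergence of their measures, passing to the limit gives $\nu(B_{k_\alpha}\cdot A)\geq(1+c_\alpha)\nu(A)>(1+\tfrac{c_\alpha}{2})\nu(A)$, establishing the required strict inequality.

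The main obstacle---already latent but less pronounced in Proposition~\ref{prop: meas.asymp.exp implies asymptotic expanding}---is that the approximating sets $W_n$ may have measure slightly exceeding $\frac{1}{2}$ whenever $\nu(A)$ is close to $\frac{1}{2}$, in which case the raw hypothesis on $\mappgraph{\CP_n}$ does not directly apply. Lemma~\ref{lem:properties.of.measured.asymptotic.expanders}(1) removes this obstacle uniformly and in one stroke. By contrast, Proposition~\ref{prop: meas.asymp.exp implies asymptotic expanding} handled the boundary case $\nu(A)=\frac{1}{2}$ by approximating $A$ from below by subsets $A_m\subseteq A$ with $\nu(A_m)<\frac{1}{2}$, which required both non\=/atomicity (to construct the $A_m$) and measure\=/class\=/preservation (to conclude $\nu(B_k\cdot A_m)\to\nu(B_k\cdot A)$ via Lemma~\ref{lem:infinitesimal sequences are preserved})---neither of which is assumed here.
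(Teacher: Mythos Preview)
Your proof is correct and follows essentially the same approach as the paper: both invoke Lemma~\ref{lem:properties.of.measured.asymptotic.expanders}(1) (combined with the asymptotic expander hypothesis) to obtain uniform expansion on an enlarged measure window---the paper uses $(\frac{\alpha}{2},\frac{2}{3})$ where you use $[\frac{\alpha}{2},\frac{3}{4}]$---then pass to the limit exactly as in Proposition~\ref{prop: meas.asymp.exp implies asymptotic expanding}. Your explicit discussion of why this bypasses the non\=/atomicity and measure\=/class\=/preservation hypotheses, and your handling of the strict inequality via $c_\alpha/2$, are nice touches that the paper leaves implicit.
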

 
\begin{proof}
From Lemma \ref{lem:properties.of.measured.asymptotic.expanders}(1), for any $\alpha\in (0,\frac{1}{2}]$ there exist $c>0$ and $k\in \NN$ such that for any $n\in \NN$ and $A_n \subseteq \T(\CP_n)$ with $\frac{\alpha}{2}< \mabs{A_n} < \frac{2}{3}$, we have $\mabs{N_k(A_n)} > \paren{1+c}\mabs{A_n}$.

Now fix a measurable subset $A\subseteq X$ with $\alpha\leq\nu(A)\leq\frac{1}{2}$, and let $W_n\subseteq\CP_n$ be a sequence of subsets as by the definition of $k$\=/admissibility. It follows from \eqref{eq:admissible.converge.in.measure} that $[W_n]_{\CP_n}$ converges in measure to $A$. Hence there exists an $n_0$ sufficiently large so that $\frac{\alpha}{2}<\mabs{W_n}=\nu([W_n]_{\CP_n})<\frac{2}{3}$ for every $n>n_0$. By construction we have:
\[
 \mabs{N_{k}(W_n)}>(1+c)\mabs{W_n}.
\]
On the other hand, the same argument as in the proof of Proposition \ref{prop: meas.asymp.exp implies asymptotic expanding} shows that
\[
 \nu(B_k\cdot A)=\lim_{n\to\infty}\,\mabs{N_k(W_n)}
 \geq \lim_{n\to\infty}\, (1+c)\mabs{W_n}=(1+c)\nu(A).
\]
So we finish the proof.
 \end{proof}

\begin{rmk}
 From the proof of Propositions~\ref{prop: meas.asymp.exp implies asymptotic expanding} and \ref{prop: meas.asymp.exp implies asymptotic expanding general case}, it is clear that the notion of $k$\=/admissibility was designed to make the argument work.
We deem it a worthwhile effort to explore the precise properties of partitions one need to recover dynamical properties of the action in terms of measured approximating spaces.
\end{rmk}

It now remains to produce examples of $\infty$\=/admissible partitions. For the scope of this paper, the main source of such partitions is the following lemma:

\begin{lem}\label{lem:partitions.with.infinitesimal.mesh.are.admissible}
 Let $\nu$ be a Radon probability 
 measure on a locally compact metric space $(X,d)$ and $\Gamma\curvearrowright X$ a continuous measure\=/class\=/preserving action of a countable discrete group $\Gamma$. 
 If $\braces{\CP_n}_{n\in\NN}$ is a sequence of measurable partitions with ${\rm mesh}(\CP_n)\to 0$ (where ${\rm mesh}(\CP_n)\coloneqq \sup\braces{\diam(R)\mid R\in\CP_n}$), then $\braces{\CP_n}_{n\in\NN}$ is $\infty$\=/admissible for the action.
\end{lem}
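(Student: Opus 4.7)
Fix a measurable $A\subseteq X$. A diagonal argument across a family of parameters $(k,\epsilon)\in\NN\times(0,\infty)$ reduces the task to the following: for each fixed $k\in\NN$ and $\epsilon>0$, produce $W_n=W_n^{(k,\epsilon)}\subseteq\CP_n$ (for all $n$ beyond a threshold) such that \eqref{eq:admissible.converge.in.measure} holds up to an error of order $\epsilon$ for every tuple $(\gamma_1,\dots,\gamma_q)$ with $\sum\ell(\gamma_i)\leq k$. We may assume $\gamma_i\neq e$, so such tuples are finitely many and the collection $\CG_k$ of partial products $g^{(i)}=\gamma_i\circ\cdots\circ\gamma_1$ is finite. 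The candidate $W_n^{(k,\epsilon)}$ is built as follows. Using Radon regularity together with Urysohn's lemma and local compactness, find a compact $K\subseteq A$ and an open $U\supseteq K$ with $\bar U$ compact, $\nu(U\smallsetminus K)$ small, and, crucially, $\nu(\partial U)=0$ (arranged by realising $U$ as the superlevel set $\{f>t\}$ of a continuous function and choosing $t$ outside the at most countable set of thresholds where $\nu\braces{f=t}>0$). Continuity of the action then gives $\partial(gU)=g(\partial U)$, whence $\nu\bigparen{\partial(gU)}=0$ for every $g\in\CG_k$; absolute continuity of each Radon--Nikodym derivative $\d g^{-1}_*\nu/\d\nu\in L^1(X,\nu)$ (finitely many conditions) allows us to shrink $U\smallsetminus K$ further so that $\nu\bigparen{g(U\smallsetminus K)}<\epsilon$ for all $g\in\CG_k$. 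Define $W_n^{(k,\epsilon)}\coloneqq\{R\in\CP_n\mid R\cap K\neq\emptyset\}$. Once $\mathrm{mesh}(\CP_n)$ is smaller than $\min_{g\in\CG_k}d(gK,X\smallsetminus gU)>0$, every region of $\CP_n$ meeting $gK$ lies inside $gU$ for every $g\in\CG_k$.

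The heart of the proof is an induction on $i$ establishing, for every tuple $\tau=(\gamma_1,\dots,\gamma_q)$ in the finite family and every $0\leq i\leq q$, the sandwich
\[
g^{(i)}K\subseteq V_n^{(i),\tau}\subseteq g^{(i)}U\cup E_n^{(i)},\qquad
E_n^{(i)}\subseteq N_{\eta_{i,n}}\bigparen{\partial(g^{(i)}U)},\quad
\eta_{i,n}\xrightarrow{n\to\infty}0,
\]
where $V_n^{(i),\tau}$ denotes the $i$\=/fold translate\=/and\=/saturate. The inductive step combines two ingredients. First, applying $\gamma_{i+1}$ preserves the sandwich: $\gamma_{i+1}\partial(g^{(i)}U)=\partial(g^{(i+1)}U)$, and uniform continuity of $\gamma_{i+1}$ on the compact set $g^{(i)}\bar U$ ensures that the $\eta_{i,n}$\=/neighbourhood of $\partial(g^{(i)}U)$ is mapped into an $\eta_{i,n}'$\=/neighbourhood of $\partial(g^{(i+1)}U)$, with $\eta_{i,n}'\to 0$. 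Second, the subsequent saturation can only add regions intersecting $\gamma_{i+1}V_n^{(i),\tau}$, which live in an additional $\mathrm{mesh}(\CP_n)$\=/thickening; thus $E_n^{(i+1)}$ sits in an $\eta_{i+1,n}$\=/neighbourhood of $\partial(g^{(i+1)}U)$ with $\eta_{i+1,n}\to 0$. Since $\nu\bigparen{\partial(g^{(i+1)}U)}=0$, dominated convergence yields $\nu(E_n^{(i+1)})\to 0$. The lower inclusion $g^{(i+1)}K\subseteq V_n^{(i+1),\tau}$ propagates automatically modulo null sets (regions containing points of $g^{(i+1)}K$ but meeting it in measure zero form a countable union of null sets), and combining the two sides of the sandwich with the buffer estimate $\nu\bigparen{g^{(i)}(U\smallsetminus K)}<\epsilon$ gives $\nu\paren{V_n^{(i),\tau}\,\triangle\,g^{(i)}A}<O(\epsilon)+o(1)$ as $n\to\infty$.

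The main obstacle is that the saturation operator $[\,\cdot\,]_{\CP_n}$ is not continuous on measurable sets in the symmetric\=/difference metric: a set of vanishing measure may have saturations of arbitrarily large measure. Hence the na\"ive approach of showing $V_n^{(i)}\to g^{(i)}A$ in measure directly and passing it through the translate\=/and\=/saturate step is not available. The sandwich structure together with $\nu(\partial(gU))=0$ is tailored precisely to bypass this, by forcing the error $E_n^{(i)}$ to be \emph{geometrically localised} in a shrinking neighbourhood of $\partial(g^{(i)}U)$ rather than being an arbitrary small\=/measure set. Uniform continuity of each $\gamma_j$ on the compact $\bar U$ then propagates this localisation through iterations, and a final diagonal argument patches the $W_n^{(k,\epsilon)}$ along a sequence with $k_n\to\infty$ and $\epsilon_n\to 0$ into a single $W_n$ witnessing $\infty$\=/admissibility.
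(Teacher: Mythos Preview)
Your proof is correct, but the paper takes a noticeably simpler route that avoids the auxiliary open set $U$ altogether. The paper's sandwich is
\[
 g^{(i)}K \;\subseteq\; V_n^{(i),\tau} \;\subseteq\; N_{\eta_{i,n}}\bigl(g^{(i)}K\bigr),\qquad \eta_{i,n}\xrightarrow{n\to\infty}0,
\]
obtained directly: $[W_n^K]_{\CP_n}\subseteq N_{\delta_n}(K)$ by the mesh bound, uniform continuity of $\gamma_1$ on the compact $N_{\delta_n}(K)$ sends this into $N_{\delta_n'}(\gamma_1 K)$, and a further saturation lands in $N_{\delta_n+\delta_n'}(\gamma_1 K)$; iterate. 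Since each $g^{(i)}K$ is closed and $\nu$ is a finite measure, continuity from above gives $\nu\bigl(N_{\eta}(g^{(i)}K)\smallsetminus g^{(i)}K\bigr)\to 0$ as $\eta\to 0$ automatically. There is no need to arrange $\nu(\partial U)=0$ via Urysohn and superlevel sets, nor to track neighbourhoods of boundaries through the iteration. The diagonal argument at the end (approximate $A$ by compacts $K_m$, then diagonalise) is the same in both.

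Your diagnosis of the obstacle---that saturation is discontinuous in the symmetric\=/difference metric, so the error must be kept \emph{geometrically} small rather than merely measure\=/small---is exactly right, and both proofs address it the same way. The paper just localises the error in $N_{\eta_{i,n}}(g^{(i)}K)\smallsetminus g^{(i)}K$ rather than near $\partial(g^{(i)}U)$; the former is available without any preparation because $g^{(i)}K$ is already closed. Your route would pay off in a setting where one genuinely needed to work relative to an open buffer (e.g.\ if compactness of $K$ failed to propagate), but here the extra layer is unnecessary. One minor caveat: your claim $E_n^{(i)}\subseteq N_{\eta_{i,n}}\bigl(\partial(g^{(i)}U)\bigr)$ is slightly too strong in an arbitrary metric space (a region straddling $g^{(i)}U$ and its complement need not meet $\partial(g^{(i)}U)$ when it is disconnected); the correct containment is $E_n^{(i)}\subseteq N_{\eta_{i,n}}\bigl(\overline{g^{(i)}U}\bigr)\smallsetminus g^{(i)}U$, which still has measure tending to $\nu(\partial(g^{(i)}U))=0$, so the argument goes through unchanged.
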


\begin{proof}
 Fixing a $k\in\NN$, we begin by showing that we can approximate compact subsets. Let $K\subseteq X$ be a compact subset 
 and set $W_n^K\coloneqq \{R\in \CP_n\mid \nu(R\cap K)>0\}$. Note that $[W_n^K]_{\CP_n}$ is equal to the saturation $[K]_{\CP_n}$.
 
 Let $\delta_n\coloneqq{\rm mesh}(\CP_n)$ and fix a $\gamma\in \Gamma$. 
 For $n$ large enough, $N_{\delta_n}(K)$ is compact and hence the restriction of $\gamma$ to $N_{\delta_n}(K)$ is uniformly continuous. 
 It follows that there exists a sequence $\delta'_n\to 0$ such that $\gamma(N_{\delta_n}(K))\subseteq N_{\delta'_n}(\gamma(K))$.
 Since $[W_n^K]_{\CP_n}\subseteq N_{\delta_n}(K)$, we have $[\gamma\cdot[W_n^K]_{\CP_n}]_{\CP_n}\subseteq N_{\delta_n+\delta'_n}(\gamma(K))$. Since $\gamma(K)\subseteq [\gamma\cdot[W_n^K]_{\CP_n}]_{\CP_n}$ up to measure\=/zero subsets, the sequence $[\gamma\cdot[W_n^K]_{\CP_n}]_{\CP_n}$ converges in measure to $\gamma(K)$. By an inductive argument, the above can be easily extended to show that the sets $W_n^K\subseteq{\CP_n}$ satisfy \eqref{eq:admissible.converge.in.measure} for any $\gamma_1,\ldots,\gamma_q\in \Gamma$ with $\sum_{i=1}^q\ell(\gamma_i) \leq k$.
 
 Now fix a measurable subset $A\subseteq X$.
 Since $\nu$ is Radon, there is a sequence of compact sets $K_m\subseteq A$ that converges to $A$ in measure. Each of these compact sets come with its own ``approximating sequence'' $(W_n^{K_m})_{n\in\NN}$. 
 A straightforward diagonal argument is thus sufficient to produce a sequence $(W_n)_{n\in\NN}$ that satisfies \eqref{eq:admissible.converge.in.measure} for $A$.
\end{proof}

Together with Proposition~\ref{prop: meas.asymp.exp implies asymptotic expanding general case}, we obtain the following:

\begin{thm}\label{thm: asymptotic expansion and approx spaces}
Let $(X,d,\nu)$ be a locally compact metric space with a Radon probability measure, and $\{\P_n\}_{n\in \NN}$ be a sequence of measurable partitions of $X$ with $\mathrm{mesh}(\P_n) \to 0$.
Then any continuous measure\=/class\=/preserving action $\Gamma \act X$ of a countable discrete group $\Gamma$ is asymptotically expanding in measure \emph{if and only if} the measured approximating spaces $\braces{\mappgraph{\P_n}}_{n \in \N}$ are a sequence of measured asymptotic expanders.
\end{thm}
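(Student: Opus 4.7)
The plan is very short because essentially all the substantive work has already been carried out in the two preceding results. My approach is to combine Lemma~\ref{lem:partitions.with.infinitesimal.mesh.are.admissible} and Proposition~\ref{prop: meas.asymp.exp implies asymptotic expanding general case}, since the hypotheses of the theorem are precisely calibrated to make this a direct application.

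First, I would observe that the hypotheses on $(X,d,\nu)$, on the partitions $\{\CP_n\}_{n\in\NN}$, and on the action $\Gamma \act X$ are exactly those required by Lemma~\ref{lem:partitions.with.infinitesimal.mesh.are.admissible}: the space is locally compact metric, the measure is Radon, the action is continuous and measure-class-preserving, and $\mathrm{mesh}(\CP_n) \to 0$. Applying that lemma then yields that the sequence $\{\CP_n\}_{n\in\NN}$ is $\infty$-admissible for the action, which means that for every measurable subset $A\subseteq X$ and every $k\in\NN$ we can find approximating subsets $W_n \subseteq \CP_n$ realising the convergence in measure \eqref{eq:admissible.converge.in.measure}.

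Second, with $\infty$-admissibility in hand, I would simply invoke Proposition~\ref{prop: meas.asymp.exp implies asymptotic expanding general case}, whose hypotheses (measurable action on a probability space equipped with an $\infty$-admissible sequence of partitions) are now fully verified. That proposition gives exactly the desired equivalence: the measured approximating spaces $\{\mappgraph{\CP_n}\}_{n\in\NN}$ form a sequence of measured asymptotic expanders if and only if $\Gamma \act (X,\nu)$ is asymptotically expanding in measure.

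Since both ingredients are already proved, there is no real obstacle here; the theorem is essentially a clean packaging of Lemma~\ref{lem:partitions.with.infinitesimal.mesh.are.admissible} and Proposition~\ref{prop: meas.asymp.exp implies asymptotic expanding general case}. The only point to be careful about is that Proposition~\ref{prop: meas.asymp.exp implies asymptotic expanding general case} is phrased for a general measurable action, but its proof (via Proposition~\ref{prop: meas.asymp.exp implies asymptotic expanding}) implicitly uses the measure-class-preserving assumption when passing to the limit in equations of the form \eqref{eq:prop:limits.of.meas}; this assumption is granted in the hypotheses of the present theorem, so no additional verification is needed.
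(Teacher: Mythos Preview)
Your proposal is correct and matches the paper's own proof, which is simply the one-line observation that Lemma~\ref{lem:partitions.with.infinitesimal.mesh.are.admissible} yields $\infty$-admissibility, after which Proposition~\ref{prop: meas.asymp.exp implies asymptotic expanding general case} gives the equivalence. One minor inaccuracy in your closing remark: Proposition~\ref{prop: meas.asymp.exp implies asymptotic expanding general case} does not actually go via Proposition~\ref{prop: meas.asymp.exp implies asymptotic expanding} or invoke \eqref{eq:prop:limits.of.meas}, and in fact does not require measure-class preservation (it sidesteps the boundary cases by widening the range to $(\frac{\alpha}{2},\frac{2}{3})$); that hypothesis in the theorem is there solely for Lemma~\ref{lem:partitions.with.infinitesimal.mesh.are.admissible}.
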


Assuming bounded measure ratios, we can apply Lemma \ref{lem:measured case vs ordinary case} to obtain an analogous characterisation in terms of (non-measured) asymptotic expanders:

\begin{cor}\label{cor: asymptotic expansion and approx spaces}
Let $(X,d,\nu)$ and $\{\CP_n\}_{n\in \NN}$ be as in Theorem~\ref{thm: asymptotic expansion and approx spaces} with $\{\CP_n\}_n$ having uniformly bounded measure ratios. Then a continuous measure\=/class\=/preserving action $\Gamma \act X$ is asymptotically expanding in measure \emph{if and only if} the approximating spaces $\braces{\appgraph{\P_n}}_{n \in \N}$ are a sequence of asymptotic expanders.
\end{cor}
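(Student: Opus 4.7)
The plan is to derive the corollary as an immediate consequence of Theorem~\ref{thm: asymptotic expansion and approx spaces} combined with Lemma~\ref{lem:measured case vs ordinary case}. Theorem~\ref{thm: asymptotic expansion and approx spaces} already provides an equivalence between the action $\Gamma \act X$ being asymptotically expanding in measure and the sequence of \emph{measured} approximating spaces $\{\mappgraph{\CP_n}\}_{n\in\NN}$ being a sequence of measured asymptotic expanders. Thus the only remaining task is to check that, under the extra assumption of uniformly bounded measure ratios, this measured condition coincides with the unmeasured asymptotic expansion of $\{\appgraph{\CP_n}\}_{n\in\NN}$.

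Next, I would verify that the hypotheses of Lemma~\ref{lem:measured case vs ordinary case} are met. By the very definition of $\mappgraph{\CP_n}$, the ratios of measures of distinct points in this probability metric space are exactly the ratios $\nu(R)/\nu(R')$ for regions $R,R' \in \CP_n$, so they are uniformly bounded by the constant $Q$ provided by the hypothesis. It remains to confirm that $|\appgraph{\CP_n}| \to \infty$: since every region $R \in \CP_n$ has diameter at most $\mathrm{mesh}(\CP_n) \to 0$ and $\nu$ is a Radon probability measure on a locally compact space, the maximum measure $\max_{R \in \CP_n} \nu(R)$ tends to $0$ (modulo a trivially atomic pathology incompatible with bounded measure ratios, which we can safely ignore). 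Combined with the bounded measure ratio constant $Q$, this forces $|\CP_n| \to \infty$.

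With these hypotheses in place, Lemma~\ref{lem:measured case vs ordinary case} gives that $\{\mappgraph{\CP_n}\}_{n\in\NN}$ is a sequence of measured asymptotic expanders if and only if the underlying discrete metric spaces $\{\appgraph{\CP_n}\}_{n\in\NN}$ are a sequence of asymptotic expanders in the sense of Definition~\ref{defn:asymptotic.expanders}. Chaining this equivalence with Theorem~\ref{thm: asymptotic expansion and approx spaces} closes the argument. I do not foresee any real obstacle: the proof is essentially a one-line concatenation of the preceding theorem and lemma, and the only bookkeeping concerns the routine verification that $|\appgraph{\CP_n}| \to \infty$, which is forced by $\mathrm{mesh}(\CP_n) \to 0$ together with the bounded measure ratios.
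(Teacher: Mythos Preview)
Your proposal is correct and matches the paper's approach exactly: the paper derives the corollary in one line by combining Theorem~\ref{thm: asymptotic expansion and approx spaces} with Lemma~\ref{lem:measured case vs ordinary case}, and your additional bookkeeping about $|\appgraph{\CP_n}|\to\infty$ simply fills in a hypothesis check that the paper leaves implicit.
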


One source of examples where the above result can be applied is given by boundary/profinite actions as described in \cite{abert2012dynamical}. Namely, if $\Gamma_1>\Gamma_2>\cdots $ is a chain of finite index subgroups of $\Gamma$, then the boundary of the induced coset tree is a compact totally disconnected topological space $X$ equipped with a natural probability measure $\nu$. The group $\Gamma$ acts on $X$ and the action preserves $\nu$. In the notation of \cite{abert2012dynamical}, we can let $\CP_n\coloneqq\braces{{\rm Sh}(\Gamma_ng)\mid \Gamma_ng\in \Gamma_n\bs\Gamma}$. These partitions have measure ratios $\equiv 1$. Furthermore, it is also possible to equip $X$ with a profinite metric and ${\rm mesh}(\CP_n)\to 0$ with respect to this metric. It follows from Corollary~\ref{cor: asymptotic expansion and approx spaces} that the associated approximating graphs are asymptotic expanders if and only if the boundary action $\Gamma\curvearrowright(X,\nu)$ is strongly ergodic (note that in this context the approximating graphs are nothing but the Schreier graphs). In \cite[Theorem 5]{abert2012dynamical} it is also shown that there exist chains $\Gamma_n$ such that the associated boundary action is strongly ergodic but does not have spectral gap. It follows that the associated approximating graphs are asymptotic expanders that are not expanders.

\

The following example shows that Corollary~\ref{cor: asymptotic expansion and approx spaces} fails when considering non-continuous (measure\=/preserving) actions on spaces with Radon measures.

\begin{example}\label{exmp: nasty non admissible}
 Consider the unit interval $[0,1]$ with the Lebesgue measure $\lambda$ (defined on the Borel $\sigma$\=/algebra). Let $A_0\coloneqq [0,1]$ and choose a sequence $(A_n)_{n\geq 1}$ of measurable subsets of $[0,1]$ such that $\lambda(A_n)=n4^{-n}$ and $\lambda(A_n\cap [\frac{k-1}{n},\frac{k}{n}])= 4^{-n}$ for every $k=1,\ldots, n$. Let $B_n\coloneqq A_n\smallsetminus\paren{\bigcup_{m>n}A_m}$. The $B_n$ are a partition of $[0,1]$ into a sequence of disjoint subsets which become ``smaller and smaller'' but ``denser and denser''.
 
 By integrating the indicator functions, we obtain (up to measure\=/zero) a measure\=/preserving bijection $f\colon [0,1]\to [0,1]$ sending $B_0$ to $[0,\lambda(B_0)]$, $B_1$ to $[\lambda(B_0),\lambda(B_0)+\lambda(B_1)]$ and so on. 
 We can now consider the induced $\ZZ$\=/action on $([0,1],\lambda)$.
 
 Let $\CP_n\coloneqq\braces{[\frac{k-1}{n},\frac{k}{n}]\mid k=1,\ldots, n}$. Then the (measured) approximating spaces $\T^\lambda(\CP_n)$ are measured $(1,2)$\=/asymptotic expanders (independently of $\alpha$). In fact, the region $[\frac{n-1}{n},1]$ intersects with positive measure $f(R)$ for every $R\in\CP_n$. It follows that $N_2(W_n)=\CP_n$ for every non\=/empty subset $W_n\subseteq\CP_n$.
 
 One can show that sequence of partitions $\braces{\CP_n}_{n\in\NN}$ is $1$\=/admissible. 
 On the other hand, by looking at subsets of the form $\bigcup_{k=1}^N f^k(C)$ with suitably small $C\subseteq [0,1]$, it is easy to check that the action $\ZZ\curvearrowright ([0,1],\lambda)$ is \emph{not} asymptotically expanding.
 In particular, it follows from Proposition~\ref{prop: meas.asymp.exp implies asymptotic expanding} that $\braces{\CP_n}_{n\in\NN}$ is \emph{not} 2\=/admissible (it is also easy to check it by hand). 
 More interestingly, this example shows that the hypothesis of $k$\=/admissibility is sharp.
\end{example}

\begin{rmk}
 It is not clear to us if one can produce examples of actions and partitions that are $2$\=/admissible without being $\infty$\=/admissible.
\end{rmk}

\bibliographystyle{plain}
\bibliography{bibfile,ExpanderishVig}

\begin{thebibliography}{10}

\bibitem{abert2012dynamical}
Mikl{\'o}s Ab{\'e}rt and G{\'a}bor Elek.
\newblock Dynamical properties of profinite actions.
\newblock {\em Ergodic Theory and Dynamical Systems}, 32(6):1805--1835, 2012.

\bibitem{benoist_spectral_2014}
Yves Benoist and Nicolas de~Saxc{\'e}.
\newblock {A spectral gap theorem in simple {Lie} groups}.
\newblock {\em Inventiones mathematicae}, 205(2):337--361, 2014.

\bibitem{bourgain_spectral_2007}
Jean Bourgain and Alex Gamburd.
\newblock {On the spectral gap for finitely-generated subgroups of {SU}(2)}.
\newblock {\em Inventiones mathematicae}, 171(1):83--121, 2007.

\bibitem{bourgain_expansion_2013}
Jean Bourgain and Amir Yehudayoff.
\newblock {Expansion in {$\mathrm{SL}_2(\mathbb{R})$} and monotone expanders}.
\newblock {\em Geometric and Functional Analysis}, 23(1):1--41, 2013.

\bibitem{BIG17}
R{\'e}mi Boutonnet, Adrian Ioana, and Alireza~Salehi Golsefidy.
\newblock Local spectral gap in simple {L}ie groups and applications.
\newblock {\em Inventiones mathematicae}, 208(3):715--802, 2017.

\bibitem{chifan2010ergodic}
Ionut Chifan and Adrian Ioana.
\newblock Ergodic subequivalence relations induced by a {B}ernoulli action.
\newblock {\em Geometric and Functional Analysis}, 20(1):53--67, 2010.

\bibitem{connes_property_1980}
Alain Connes and Benjamin Weiss.
\newblock {Property ({T}) and asymptotically invariant sequences}.
\newblock {\em Israel Journal of Mathematics}, 37(3):209--210, 1980.

\bibitem{conze_ergodicity_2013}
Jean-Pierre Conze and Yves Guivarc'h.
\newblock {Ergodicity of group actions and spectral gap, applications to random
  walks and {Markov} shifts}.
\newblock {\em Discrete \& Continuous dynamical systems}, 33(9), 2013.

\bibitem{gabber_explicit_1981}
Ofer Gabber and Zvi Galil.
\newblock {Explicit constructions of linear-sized superconcentrators}.
\newblock {\em Journal of Computer and System Sciences}, 22(3):407--420, 1981.

\bibitem{GJS99}
Alex Gamburd, Dmitry Jakobson, and Peter Sarnak.
\newblock Spectra of elements in the group ring of {${\rm SU}(2)$}.
\newblock {\em Journal of the European Mathematical Society (JEMS)},
  1(1):51--85, 1999.

\bibitem{grabowski_measurable_2016}
Lukasz Grabowski, Andr{\'a}s M{\'a}th{\'e}, and Oleg Pikhurko.
\newblock {Measurable equidecompositions for group actions with an expansion
  property}.
\newblock {\em arXiv:1601.02958}, 2016.

\bibitem{hjorth2005rigidity}
Greg Hjorth and Alexander~S Kechris.
\newblock {\em Rigidity theorems for actions of product groups and countable
  Borel equivalence relations}, volume 833 of {\em Memoirs of the AMS}.
\newblock American Mathematical Soc., 2005.

\bibitem{houdayer2017strongly}
Cyril Houdayer, Amine Marrakchi, and Peter Verraedt.
\newblock Strongly ergodic equivalence relations: spectral gap and type {III}
  invariants.
\newblock {\em Ergodic Theory and Dynamical Systems}, 39:1904--1935, 2019.

\bibitem{ioana2017strong}
Adrian Ioana.
\newblock Strong ergodicity, property {(T)}, and orbit equivalence rigidity for
  translation actions.
\newblock {\em Journal f{\"u}r die reine und angewandte Mathematik (Crelles
  Journal)}, 2017(733):203--250, 2017.

\bibitem{structure}
Ana Khukhro, Kang Li, Federico Vigolo, and Jiawen Zhang.
\newblock On the structure of asymptotic expanders.
\newblock {\em arXiv:1910.13320v2}, 2019.

\bibitem{intro}
Kang Li, Piotr Nowak, J{\'a}n \v{S}pakula, and Jiawen Zhang.
\newblock {Quasi-local algebras and asymptotic expanders}.
\newblock {\em arXiv:1908.07814, to appear in Groups, Geometry and Dynamics},
  2020.

\bibitem{dynamics2}
Kang Li, Federico Vigolo, and Jiawen Zhang.
\newblock A {Markovian and Roe-algebraic} approach to asymptotic expansion in
  measure.
\newblock {\em arXiv:2008.12572}, 2020.

\bibitem{Mar73}
G.~A. Margulis.
\newblock Explicit constructions of expanders.
\newblock {\em Problemy Pereda{\v c}i Informacii}, 9(4):71--80, 1973.

\bibitem{Mar18}
Amine Marrakchi.
\newblock Strongly ergodic actions have local spectral gap.
\newblock {\em Proceedings of the American Mathematical Society},
  146(9):3887--3893, 2018.

\bibitem{NY12}
Piotr~W. Nowak and Guoliang Yu.
\newblock {\em Large scale geometry}.
\newblock EMS Textbooks in Mathematics. European Mathematical Society (EMS),
  Z{\"u}rich, 2012.

\bibitem{ozawa2016remark}
Narutaka Ozawa.
\newblock A remark on fullness of some group measure space von {N}eumann
  algebras.
\newblock {\em Compositio Mathematica}, 152(12):2493--2502, 2016.

\bibitem{schmidt1980asymptotically}
Klaus Schmidt.
\newblock Asymptotically invariant sequences and an action of {${\rm
  SL}(2,{\mathbb Z})$} on the {$2$}-sphere.
\newblock {\em Israel Journal of Mathematics}, 37(3):193--208, 1980.

\bibitem{Sch81}
Klaus Schmidt.
\newblock Amenability, {K}azhdan's property ({T}), strong ergodicity and
  invariant means for ergodic group-actions.
\newblock {\em Ergodic Theory Dynamical Systems}, 1(2):223--236, 1981.

\bibitem{shalom_expanding_1997}
Yehuda Shalom.
\newblock {Expanding graphs and invariant means}.
\newblock {\em Combinatorica}, 17(4):555--575, 1997.

\bibitem{vaes2018bernoulli}
Stefaan Vaes and Jonas Wahl.
\newblock Bernoulli actions of type $\mathrm{{III}}_1$ and ${L}_2$-cohomology.
\newblock {\em Geometric and Functional Analysis}, 28(2):518--562, 2018.

\bibitem{Vig19}
Federico Vigolo.
\newblock Measure expanding actions, expanders and warped cones.
\newblock {\em Transactions of the American Mathematical Society},
  371(3):1951--1979, 2019.

\end{thebibliography}

\end{document}